\theoremstyle{definition}
\newtheorem{definition}{Definition}
\newtheorem{remark}{Remark}
\newtheorem{example}{Example}
\theoremstyle{theorem}
\newtheorem{lemma}{\bf Lemma}
\newtheorem{proposition}{\bf Proposition}
\newtheorem{theorem}{\bf Theorem}
\renewcommand{\p@algorithm}{\arabic{algorithm}\expandafter\@gobble}
\newcommand{\PARAMETERS}{\item[\textbf{Parameters:}]}
\newcounter{step}[algorithm]
\newcommand\STEP[2][\(\triangleright\)]{%
	\refstepcounter{step}
	\vskip 0.25\baselineskip
	\item[]\hskip -\algorithmicindent #1 \textbf{Step \arabic{step}}%
	\ifthenelse{\equal{\unexpanded{#2}}{}}{}{ (\texttt{#2})}%
	\textbf{.}%
}
\def\algo#1\end{%
	\noindent\fbox{%
	\begin{minipage}[b]{\dimexpr\columnwidth-\algorithmicindent\relax}
	\begin{algorithmic}
	#1
	\end{algorithmic}
	\end{minipage}
	}%
\end}
	\renewcommand{\@secnumfont}{\bfseries}
    \def\section{\@startsection{section}{1}%
    \z@{.7\linespacing\@plus\linespacing}{.5\linespacing}%
    {\normalfont\bfseries\scshape \centering}}
\newcommand{\x}{x}
\newcommand{\y}{y}
\newcommand{\g}{g}
\newcommand{\del}{\delta}
\newcommand{\vv}{v}
\newcommand{\uu}{u}
\newcommand{\p}{p}
\newcommand{\q}{q}
\newcommand{\h}{h}
\newcommand{\dd}{d}
\newcommand{\z}{z}
\begin{document}

\title[Nonsmooth trust-region algorithm]{Nonsmooth trust-region algorithm with applications to robust stability of uncertain systems}

\author[P. Apkarian, D. Noll, L. Ravanbod]{Pierre Apkarian$^\dag$, Dominikus Noll$^*$, Laleh Ravanbod$^*$}
\thanks{$^\dag$Control System Department, ONERA, Toulouse, France}
\thanks{$^*$Institut de Math\'ematiques, Universit\'e de Toulouse, France}
\date{}

\maketitle

\begin{abstract}
We propose a bundle trust-region algorithm
to minimize locally Lipschitz functions which are potentially nonsmooth
and nonconvex. We prove global convergence of our method and show by way of an example
that the classical convergence argument in trust-region methods based
on the Cauchy point fails in the nonsmooth setting.
Our method is tested experimentally on three problems in automatic control.

\vskip 8pt
\noindent{\bf Keywords.} Bundle $\cdot$ cutting plane $\cdot$  trust-region  $\cdot$ Cauchy point $\cdot$ 
global convergence $\cdot$   parametric robustness $\cdot$
distance to instability $\cdot$ worst-case $H_\infty$-norm 
\end{abstract}

\maketitle

\section{Introduction}
We consider optimization problems of the form
\begin{eqnarray}
\label{program}
\begin{array}{ll}
\mbox{minimize} & f(\x)\\
\mbox{subject to} & \x\in C\\
\end{array}
\end{eqnarray}
where $f:\mathbb R^n\to \mathbb R$ is  locally Lipschitz, but possibly nonsmooth and nonconvex, and where $C$
is a simply structured closed convex constraint set.  
We develop a bundle trust-region algorithm for (\ref{program}),  which uses 
nonconvex cutting planes in tandem with
a suitable trust-region management  to assure global convergence. 
The trust-region management is to be considered as an alternative to proximity control, which is the usual policy
in bundle methods. 
Trust-regions allow a tighter control on the step-size, and give a larger choice of norms,
whereas bundling is fused on the use of the Euclidean norm.   Our experimental part demonstrates how
these features may be exploited algorithmically.

Algorithms where bundle and trust-region elements  are combined are rather sparse in the literature.
For convex objectives Ruszcy\'nski \cite{rud} presents a bundle  trust-region method, 
which  can be extended to  composite convex functions. 
An early contribution where bundling and trust-regions are combined is
\cite{thesis_schramm,schramm}, and this is also used in versions of the BT-code \cite{zowe}.
Fuduli {\em et al.} \cite{fuduli}
use DC-functions to form a non-standard trust-region, which they also use in tandem with cutting planes.
A feature which these methods share  with nonconvex bundle methods like
 Sagastiz\'abel and Hare \cite{hare1,saga} or \cite{NPR08}  is that the objective
 is approximated by a simply structured, often polyhedral, working model, 
 which is updated iteratively by adding cutting planes at unsuccessful trial steps.
 Our main Theorem \ref{theorem1} analyses the interaction of this mechanism with the trust-region management, and assures global
 convergence  under realistic hypotheses.
 
The trust-region strategy is well-understood in smooth optimization, where  global convergence is proved
by exploiting properties of the Cauchy point,
as pioneered in Powell \cite{powell}. For the present work it is therefore of the essence 
to realize that the Cauchy point
fails  in the nonsmooth setting. This happens  even for polyhedral convex functions, the simplest possible
case, as we demonstrate by way of a counterexample.  This explains why the convergence proof has to be organized along different lines.

The question is then  whether there are
more restrictive classes of nonsmooth functions, where the Cauchy point can be
salvaged. In response we show that the classical trust-region strategy with Cauchy point
is still valid for upper $C^1$-functions, and at least partially, for functions having a strict
standard model.  It turns out that several problems in control and in contact mechanics are
in this class, which justifies the disquisition.  Nonetheless, the class of functions where the Cauchy point 
works remains exceptional in the nonsmooth framework, which is corroborated by the fact that
it does not  include nonsmooth convex functions.

A strong incentive for the present work comes indeed from
applications in automatic control. 
In the experimental part we will
apply our novel bundle trust-region method to compute locally optimal
solutions to three NP-hard problems in the theory of systems with uncertain parameters.
This includes (i) computing the worst-case
$H_\infty$-norm of a system  over a given uncertain parameter range, 
(ii)  checking robust stability of an uncertain system over a given parameter range,
and (iii) computing the distance to instability
of a nominally stable system with uncertain parameters.  In these applications
the versatility of the bundle trust-region approach with regard to the choice of the
norm is exploited.

Nonsmooth trust-region methods which do {\em not} include the possibility of bundling are more common,
see for instance
Dennis {\em et al.} \cite{dennis}, where the authors present an axiomatic approach, and
\cite[Chap. 11]{conn}, where that idea is further  expanded. 
A recent trust-region method for DC-functions is \cite{DC}.

The structure of the paper is as follows. The algorithm
is developed in section \ref{sect_algo}, and its global convergence is proved in section \ref{sect_convergence}.
Applications of the model approach are discussed in section \ref{sect_applications}, where
we also discuss failure of the Cauchy point.  Numerical experiments with
three problems in  automatic control are presented in section \ref{sect_experiments}.

\section*{Notation}
For nonsmooth optimization we follow
\cite{Cla83}. 
The Clarke directional derivative of $f$ is $f^\circ(\x,\dd)$, its Clarke subdifferential $\partial f(\x)$. For a function $\phi$ of two variables
$\partial_1\phi$ denotes the Clarke subdifferential with respect to the first variable. For symmetric matrices $M\preceq 0$
means negative semidefinite. For linear system theory see
\cite{ZDG96}.

\section{Presentation of the algorithm}
\label{sect_algo}
In this chapter we derive our 
trust-region algorithm to solve program (\ref{program}) and discuss its building blocks.

\subsection{Working model}
We start by explaining how a local approximation of $f$
in the neighborhood of the current serious iterate $\x$, called the {\em working model}  of $f$, is generated iteratively.
We recall the notion of a first-order model of $f$ introduced in \cite{NPR08}.

\begin{definition}
\label{def1}
A function
$\phi:\mathbb R^n\times \mathbb R^n\to \mathbb R$ is called a {\em first-order model} of $f$ on a set $\Omega$ if
$\phi(\cdot,\x)$ is convex for every $\x\in \Omega$, and the following properties are satisfied:
\begin{enumerate}
\item[$(M_1)$] $\phi(\x,\x) = f(\x)$, and $\partial_1 \phi(\x,\x)\subset \partial f(\x)$.
\item[$(M_2)$]  If $\y_k \to \x$, then there exist $\epsilon_k\to 0^+$ such that
$f(\y_k)\leq \phi(\y_k,\x) + \epsilon_k \|\y_k-\x\|$. 
\item[$(M_3)$] If $\x_k\to \x$, $\y_k\to \y$, then
$\limsup_{k\to\infty} \phi(\y_k,\x_k) \leq \phi(\y,\x)$. \hfill $\square$
\end{enumerate}
\end{definition}

We may think of $\phi(\cdot,\x)$ as  a  non-smooth first-order Taylor expansion of $f$ at $\x$.
Every locally Lipschitz function has indeed a first-order model $\phi^\sharp$, which we call the {\em standard model},
defined as
\[
\phi^\sharp(\y,\x) = f(\x) + f^\circ(\x,\y-\x).
\]
Here $f^\circ(\x,{\dd})$ is the Clarke directional derivative of $f$ at $\x$ in direction ${\dd}$.
Following \cite{NPR08}, a first-order model $\phi(\cdot,\x)$ is called {\em strict} at $\x\in\Omega$ if the following
strict version of $(M_2)$ is satisfied:
\begin{enumerate}
\item[$(\widetilde{M}_2)$]  Whenever $\y_k \to \x$, $\x_k\to \x$, there exist $\epsilon_k\to 0^+$ such that
$f(\y_k)\leq \phi(\y_k,\x_k) + \epsilon_k \|\y_k-\x_k\|$. 
\end{enumerate}

\begin{remark}
Axiom $(M_2)$ corresponds to the one-sided Taylor type estimate
$f(\y) \leq \phi(\y,\x) + {\rm o}(\|\y-\x\|)$ as $\y\to \x$. In contrast, axiom $(\widetilde{M}_2)$
means $f(\y) \leq \phi(\y,\x) + {\rm o}(\|\y-\x\|)$ as $\|\y-\x\| \to 0$ uniformly
on bounded sets. This is analogous to the difference between differentiability and strict differentiability,
hence the nomenclature of a strict model.
\end{remark}

\begin{remark}
Note that the standard model $\phi^\sharp$ of $f$ is not always strict \cite{Noll2010}. A strict first-order model $\phi$ is for instance
obtained for composite  functions $f = h\circ F$ with $h$ convex and $F$ of class $C^1$, if one defines
\[
\phi(\y,\x) = h\left( F(\x) + F'(\x)(\y-\x) \right),
\] 
where $F'(\x)$ is the differential of the mapping $F$ at $\x$.  The use of a natural
model of this form covers for instance approaches
like Powell \cite{powell},  or Ruszczy\'nski \cite{rud}, where composite functions are
discussed.

Observe that every convex $f$  is its own strict model $\phi(\y,\x)=f(\y)$ in the sense of definition
\ref{def1}. As a consequence, our algorithmic framework contains the convex cutting plane trust-region method
\cite{rud} as a special case.
\end{remark}

\begin{remark}
It follows from the previous remark that a function $f$ may have several first-order models. 
Every model $\phi$ leads to a different algorithm for (\ref{program}).
\end{remark}

We continue to consider $\x$ as the current serious iterate of our algorithm to be designed,
and we consider $\z$,  a trial point near $\x$, which is a candidate to become the next serious iterate $\x^+$.
The way trial points are generated will be explained in Section \ref{sect_tangent}.

\begin{definition}
Let $\x$ be the current serious iterate and $\z$ a trial step.
Let $\g$ be a subgradient of $\phi(\cdot,\x)$ at $\z$, for short,
$\g\in \partial_1\phi(\z,\x)$. Then the affine function
$m(\cdot,\x) = \phi(\z,\x) + \g^\top (\cdot - \z)$ is called a {\em cutting plane} of $f$ at serious iterate $\x$
and trial step $\z$. \hfill $\square$
\end{definition}

We may always represent a cutting plane at serious iterate $\x$ in the form
\[
m(\cdot,\x) = a + \g^\top (\cdot - x),
\]
where $a = m(\x,\x) = \phi(\z,\x) + \g^\top (\x-\z)\leq f(\x)$ and $\g\in \partial_1\phi(\z,\x)$. 
We say that the pair $(a,\g)$ represents the cutting plane $m(\cdot,\x)$. 

We also allow cutting planes $m_0(\cdot,\x)$  at serious iterate $\x$ with trial step 
$\z = \x$.
We refer to these
as {\em exactness planes} of $f$ at serious iterate $\x$, because $m_0(\x,\x) = f(\x)$.
Every $(a,\g)$ representing an exactness plane is of the form $(f(\x),\g_0)$ with $\g_0\in \partial f(\x)$.

\begin{remark}
For the standard model $\phi^\sharp$ a cutting plane for trial step $\z$ at serious iterate $\x$
has the very specific form $m^\sharp(\cdot,\x) = f(\x) + \g_z^\top (\cdot-\x)$, where
$\g_z\in \partial f(\x)$ attains the maximum $f^\circ(\x,\z-\x)=\g_z^\top (\z-\x)$. Here every cutting plane $m^\sharp(\cdot,\x)$ is
also an exactness plane, a fact which will no longer be true for other  models. If $f$ is strictly differentiable at $\x$, then there is only one cutting plane
$m^\sharp(\cdot,\x)=f(\x)+\nabla f(\x)^\top (\cdot-\x)$, the first-order Taylor polynomial.
\end{remark}

\begin{definition}
\label{def_working}
Let $\mathcal G_k$ be a set of pairs $(a,\g)$ all representing cutting planes of $f$ at trial steps
around the serious iterate $\x$.
Suppose $\mathcal G_k$ contains at least one exactness plane at  $\x$.
Then 
$\phi_k(\cdot,\x) = \max_{(a,\g)\in \mathcal G_k} a + \g^\top (\cdot - \x)$
is called a {\em working model} of $f$ at $\x$.
\hfill $\square$
\end{definition} 
 
 \begin{remark}
 We index working models $\phi_k$ by the inner loop counter $k$ to highlight
 that they are updated in the inner loop by adding 
 tangent planes of the ideal model $\phi$ at the null steps $\y^k$.
 
 Usually the $\phi_k$ are rough polyhedral approximation of $\phi$, 
 but we do not exclude cases where
 the $\phi_k$ are generated by infinite sets $\mathcal G_k$. 
 This is for instance the case in the spectral bundle method  \cite{helmberg1,helmberg2,helmberg3}, see also
\cite{ANP}, which we discuss this in 
\ref{spectral}.
 \end{remark}
 
 \begin{remark}
 Note that even the choice $\phi_k=\phi$ is allowed in definition \ref{def_working} and in algorithm \ref{algo1}. This corresponds to
 $\mathcal G=\{(a,g): g\in \partial f(\z), a=\phi(\z,\x)+\g^\top(\x-\z)\}$, which is the largest possible
 set of cuts, or the set of all cuts obtained from $\phi$. We discuss this case in section \ref{full}. If  
 $\phi^\sharp$ is used, then the corresponding working models are denoted $\phi^\sharp_k$. Their case is analyzed in section \ref{sect_standard}.
 \end{remark}
 
 The properties of a working model may be summarized as follows
 
 \begin{proposition}
 Let $\phi_k(\cdot,\x)$ be a working model of $f$ at $\x$ built from $\mathcal G_k$ and based on the ideal model $\phi$. Then
 \begin{enumerate}
 \item[(i)] $\phi_k(\cdot,\x) \leq \phi(\cdot,\x)$.
 \item[(ii)] $\phi_k(\x,\x) = \phi(\x,\x)=f(\x)$.
 \item[(iii)] $\partial_1 \phi_k(\x,\x) \subset \partial_1 \phi(\x,\x)\subset \partial f(\x)$.
 \item[(iv)] If $(a,g)\in \mathcal G_k$ contributes to $\phi_k$ and stems from the trial step $\z$ at serious
 iterate $\x$, then $\phi_k(\z,\x)=\phi(\z,\x)$.
 \end{enumerate}
 \end{proposition}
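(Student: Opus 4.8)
The plan is to derive all four items from the subgradient inequality for convex functions together with axiom $(M_1)$; there is no genuine difficulty here. The single observation on which everything rests is this: every pair $(a,\g)\in\mathcal G_k$ represents a cutting plane $m(\cdot,\x)=\phi(\z,\x)+\g^\top(\cdot-\z)$ for some trial step $\z$ near $\x$ and some $\g\in\partial_1\phi(\z,\x)$, and since $\phi(\cdot,\x)$ is convex the subgradient inequality yields $m(\y,\x)\le\phi(\y,\x)$ for all $\y$, with equality at $\y=\z$. Taking the supremum over $(a,\g)\in\mathcal G_k$ immediately gives (i), and this step is insensitive to whether $\mathcal G_k$ is finite or infinite, the ``$\max$'' in definition \ref{def_working} being read as a supremum of affine functions when $\mathcal G_k$ is infinite.

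For (ii) I would use that $\phi(\x,\x)=f(\x)$ is part of $(M_1)$, while the exactness plane guaranteed to lie in $\mathcal G_k$ has the form $(f(\x),\g_0)$, so evaluating the defining maximum at $\y=\x$ gives $\phi_k(\x,\x)\ge f(\x)$; combined with (i) at $\y=\x$ this forces equality. For (iii) the inclusion $\partial_1\phi(\x,\x)\subset\partial f(\x)$ is again $(M_1)$, and the first inclusion follows from the standard fact that if $g_1\le g_2$ are convex with $g_1(\x)=g_2(\x)$, then $\partial g_1(\x)\subset\partial g_2(\x)$ — any $s\in\partial g_1(\x)$ satisfies $g_2(\y)\ge g_1(\y)\ge g_1(\x)+s^\top(\y-\x)=g_2(\x)+s^\top(\y-\x)$. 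Applying this with $g_1=\phi_k(\cdot,\x)$ and $g_2=\phi(\cdot,\x)$ is legitimate by (i) and (ii), noting that $\phi_k(\cdot,\x)$ is finite everywhere (again by (i), and bounded below by the exactness plane), hence continuous and subdifferentiable at $\x$.

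For (iv), if $(a,\g)\in\mathcal G_k$ stems from trial step $\z$, then $a=\phi(\z,\x)+\g^\top(\x-\z)$, whence $a+\g^\top(\z-\x)=\phi(\z,\x)$; since $(a,\g)$ enters the maximum defining $\phi_k$ we get $\phi_k(\z,\x)\ge\phi(\z,\x)$, and (i) at $\y=\z$ supplies the reverse inequality, so $\phi_k(\z,\x)=\phi(\z,\x)$. (In fact the hypothesis that the plane actually ``contributes'' to $\phi_k$ is not needed; mere membership in $\mathcal G_k$ suffices.) None of this is hard; the only place that calls for a moment's care is checking that the arguments for (i) and (iii) go through verbatim when $\mathcal G_k$ is infinite, which they do, because a pointwise supremum of affine minorants of $\phi(\cdot,\x)$ is again a closed convex function dominated by $\phi(\cdot,\x)$.
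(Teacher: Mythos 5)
Your proof is correct and follows essentially the same route as the paper: (i) via the subgradient inequality making each cutting plane an affine minorant of $\phi(\cdot,\x)$, (ii) from the exactness plane combined with (i), (iii) by the comparison of subdifferentials of nested convex functions agreeing at $\x$ together with $(M_1)$, and (iv) from equality of the cutting plane with $\phi$ at its own trial step. Your added remarks — that the argument survives infinite $\mathcal G_k$ and that ``contributes'' can be weakened to mere membership in (iv) — are accurate but not points the paper dwells on.
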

 
 \begin{proof}
 By construction $\phi_k$ is a maximum of affine minorants of $\phi$, which proves (i). Since at least
 one plane in $\mathcal G_k$ is of the form $m_0(\cdot,\x) = \phi(\x,\x) + \g^\top(\cdot-\x)$ with $\g\in \partial_1\phi(\x,\x)$, we have
 $\phi_1(\x,\x) \geq m_0(\x,\x) = \phi(\x,\x) = f(\x)$, which proves (ii). To prove (iii), observe that since
 $\phi_k(\cdot,\x)$ is convex, every $\g\in \partial_1\phi_k(\x,\x)$ gives an affine minorant
 $m(\cdot,\x) = \phi_k(\x,\x) + \g^\top (\cdot-\x)$ of $\phi_k(\cdot,\x)$. Then $m(\cdot,\x)\leq \phi(\cdot,\x)$
 with equality at $\x$. By convexity  $\g\in \partial_1 \phi(\x,\x)$, and by axiom $(M_1)$ we have
 $\g\in \partial f(\x)$. As for (iv), observe that every cutting plane $m(\cdot,\x)$ at $\z$ satisfies $m(\z,\x)=\phi(\z,\x)$,
 hence also $\phi_k(\z,\x) = \phi(\z,\x)$. 
 \end{proof}
 
\subsection{Tangent program}
\label{sect_tangent}
In this section we discuss how trial steps  are generated.
Given the current working model $\phi_k(\cdot,\x)
= \max\{ a + \g^\top(\cdot - \x): (a,\g)\in \mathcal G_k\}$, and the current trust-region radius $R_k$, the {\em tangent program}
is the following convex optimization problem
\begin{eqnarray}
\label{tangent}
\begin{array}{ll}
\mbox{minimize} & \phi_k(\y,\x)\\
\mbox{subject to} & \y\in C \\
&\|\y-\x\| \leq R_k
\end{array}
\end{eqnarray}
where $\|\cdot\|$ could be any norm on $\mathbb R^n$. 
Let $\y^k$ be an optimal solution of (\ref{tangent}). 
By the necessary optimality condition there exists a subgradient
$\g_k\in \partial \left(\phi_k(\cdot,\x)+i_C  \right)(\y^k)$ and a vector $\vv_k$
in the normal cone to $B(\x,R_k)$ at $\y^k\in B(\x,R_k)$ such that $0=\g_k+\vv_k$,
where $i_C$ is the indicator function of $C$.
We call $\g_k$ the {\em aggregate subgradient} at $\y^k$. This terminology stems
from the classical bundle method, when a polyhedral working
model is used, see Ruszczy\'nski \cite{rud},  Kiwiel \cite{kiwiel}.

Solutions $\y^k$ of (\ref{tangent}) are candidates to become the
next serious iterate $\x^+$. For practical reasons
we now enlarge the set of possible candidates.
Fix $0 < \theta \ll 1$ and $M\geq 1$, then every
$\z^k\in C \cap B(\x,M\|x-\y^k\|)$ satisfying
\begin{eqnarray}
\label{trial}
f(\x) - \phi_k(\z^k,\x) \geq \theta \left( f(\x)-\phi_k(\y^k,\x) \right)
\end{eqnarray}
is called a {\em trial step}. Note that $\y^k$ itself is of course a trial step, 
because $f(\x) \geq \phi_k(\y^k,\x)$ by the definition of the tangent program.
But due to $\theta\in (0,1)$,  there exists an
entire  neighborhood
$U$ of $\y^k$ such that every $\z^k\in U \cap C$ is a trial step.

\begin{remark}
The role of $\y^k$ here is not unlike 
that of the Cauchy point in classical trust-region  methods. Suppose we use a standard working model $\phi_k^\sharp$ and
$f$ is strictly differentiable at $\x$. Then 
$\phi_k^\sharp(\cdot,\x) = \phi^\sharp(\cdot,\x)=f(\x) + \nabla f(\x)^\top (\cdot-\x)$. In the unconstrained case
$C=\mathbb R^n$ the solution $\y^k$ has then the explicit form $\y^k=\x-R_k \frac{\nabla f(\x)}{\|\nabla f(\x)\|}$, 
which is indeed the Cauchy point as
considered in \cite{sartenaer}, see also \cite[(5.108)]{rud}. Condition (\ref{trial}) then takes the familiar  form
$f(\x)-\phi_k^\sharp(\z^k,\x) \geq \sigma \|\nabla f(\x)\| R_k$, 
see \cite[(5.110)]{rud}.
\end{remark}

\subsection{Acceptance test}
In order to decide whether a trial step
$\z^k$ will become the next serious iterate $\x^+$, we compute the test quotient
\begin{eqnarray}
\label{rho}
\rho_k
=
\frac{f(\x) - f(\z^k)}{f(\x)-\phi_k(\z^k,\x)},
\end{eqnarray}
which compares as usual actual progress and model predicted progress.
For a fixed parameter $0 < \gamma < 1$, the decision is as follows.
If $\rho_k \geq \gamma$, then the trial step $\z^k$ is accepted as the
new iterate $\x^+=\z^k$, and we call this a {\em serious step}. On the other hand, if $\rho_k < \gamma$, then $\z^k$
is rejected and referred to as a {\em null step}. In that case
we compute a cutting plane $m_k(\cdot,\x)$ at $\z^k$, and add it to the 
new set $\mathcal G_{k+1}$ in order to improve our working model. In other words, a pair $(a_{k},\g_{k})$
is added, where $\g_{k} \in \partial_1 \phi(\z^k,\x)$ and $a_{k}=\phi(\z^k,\x)+\g_{k}^\top(\x-\z^k)$.

\begin{remark}
Adding one cutting plane at the null step $\z^k$ is  mandatory, but
we may at leisure add several other tangent planes of $\phi(\cdot,\x)$ to further improve the working model.
A case of practical importance,  where the
$\phi_k$ are generated by infinite sets $\mathcal G_k$ of cuts, is presented in section \ref{spectral}.
\end{remark}

\begin{remark}
In most applications $\phi_k$ is a polyhedral convex function. If $C$ is also polyhedral, then it
is attractive to choose a polyhedral trust-region norm $\|\cdot\|$, because this makes (\ref{tangent}) a linear program.
\end{remark}

\begin{remark}
For polyhedral $\phi_k$ 
one can limit the size of the sets $\mathcal G_k$. 
Consider for simplicity $C=\mathbb R^n$, then the tangent program (\ref{tangent}) 
is $p=\min\{t: a_i + \g_i^\top (\y-\x) - t \leq 0, i=0,\dots,k, \|\z-\x\| \leq R_k\}$. Its  dual is
$d=\max\{\sum_{i=1}^k \lambda_i a_i - R_k\|\sum_{i=1}^k \lambda_i g_i\|: \lambda_i \geq 0, \sum_{i=1}^k \lambda_i=1\}$.
By Carath\'eodory's theorem we can select a subset $\{(a_0,g_0),\dots,(a_n,g_n)\}$ of $\mathcal G_k$ of size at most $n+1$
with the same convex hull as $\mathcal G_k$, so it is always possible to limit $|\mathcal G_k|\leq n+1$. This estimate
is pessimistic. An efficient but heuristic method is 
to remove from $\mathcal G_k$ a certain number of cuts which were not active at the last $\z^k$. 
In the bundle method with proximity control,  Kiwiel's aggregate subgradient
\cite{kiwiel} allows a rigorous theoretical limit of $|\mathcal G_k| \leq 3$, even though in practice one keeps
more cuts in the $\mathcal G_k$. It is not known whether
Kiwiel's argument can be extended to the trust-region case, and the only known bound is $n+1$,
see also \cite[Ch. 7.5]{rud} for a discussion. 
\end{remark}

\subsection{Nonsmooth solver}
\label{sect.solver}
	We are now ready to present
	our algorithm for program (\ref{program}).
	See Algorithm \ref{algo1} next page.

\begin{algorithm}
\caption{Nonsmooth trust-region method 
}\label{algo1}
\noindent\fbox{%
\begin{minipage}[b]{\dimexpr\textwidth-\algorithmicindent\relax}
\begin{algorithmic}
\PARAMETERS 
$0 <\gamma <\widetilde{\gamma} <1, 0 <\gamma <\Gamma \leq 1,  0 < \theta \ll 1$, $M \geq 1$. 
\STEP{Initialize outer loop} 
	Choose initial  iterate $\x^1 \in C$. Initialize memory trust-region radius as $R_1^\sharp >0$. Put $j =1$.
\STEP[$\diamond$]{Stopping test} 
	At outer loop counter $j$, stop if $\x^j$ is a critical point of (\ref{program}).
Otherwise, goto inner loop.
\STEP{Initialize inner loop} 
	Put inner loop counter $k =1$ and initialize trust-region radius  as $R_1=R_j^\sharp$. Build initial working model
	$ \phi_1(\cdot, \x^j)$ based on $\mathcal G_1$, where at least $(f(\x^j),\g_{0j})\in \mathcal G_1$ for some
	where $\g_{0j}\in\partial f(\x^j)$. Possibly enrich  $\mathcal G_1$ by recycling some of the planes from the previous
	serious step.
\STEP{Trial step generation} 
	At inner loop counter $k$ find solution $\y^k$ of the tangent program
\[
\begin{array}{ll}
\text{minimize} & \phi_k(\y, \x^j)\\
\text{subject to}& \y\in C\\
&\|\y-\x^j\|\leq R_k
\end{array}
\]
Then compute any  trial step $\z^k\in C \cap B(\x^j,M\|\x^j-\y^k\|)$ satisfying
$f(\x^j)-\phi_k(\z^k,\x^j) \geq \theta \left( f(\x^j)-\phi_k(\y^k,\x^j)  \right)$.
\vskip -0.25\baselineskip
\STEP[$\diamond$]{Acceptance test} 
	If 
	$$ \rho_k =\frac{f(\x^j)-f(\z^k)}{f(\x^j)-\phi_k(\z^k, \x^j)} \geqslant \gamma, $$
put $\x^{j+1} =\z^k$ (serious step), quit inner loop and goto step 8. Otherwise (null step), continue inner loop with step 6.
\STEP{Update working model} 
	Generate a cutting plane $m_k(\cdot, \x^j) =a_k +\g_k^\top(\cdot -\x^j)$ of $f$ at the null step
	$\z^k$ at counter $k$ belonging to the current serious step $\x^j$.
	Add $(a_k,\g_k)$ to $\mathcal G_{k+1}$.
	Possibly taper out  $\mathcal G_{k+1}$ by removing some of the
	older inactive planes in $\mathcal G_k$. Build $\phi_{k+1}$ based on $\mathcal G_{k+1}$.
\STEP[$\diamond$]{Update trust-region radius} 
	Compute secondary control parameter
	$$ \widetilde{\rho}_k =\frac{f(\x^j)-\phi(\z^k, \x^j)}{f(\x^j)-\phi_k(\z^k, \x^j)} $$
and put 
	$$ R_{k+1} =\begin{cases} R_k &\text{if } \widetilde{\rho}_k <\widetilde{\gamma},\\ \frac{1}{2}R_k &\text{if } \widetilde{\rho}_k \geqslant \widetilde{\gamma}.\end{cases} $$
Increase inner loop counter $k$ and loop back to step 4.
\STEP[$\diamond$]{Update  memory radius} 
	 Store new memory radius
	$$ R^{\sharp}_{j+1} =\begin{cases} R_{k} &\text{if } \rho_k < \Gamma,\\ {2}R_{k} &\text{if } \rho_k \geqslant \Gamma.\end{cases} $$
	
Increase outer loop counter $j$ and loop back to step 2.

\end{algorithmic}
\end{minipage}
}%
\end{algorithm}

\section{Convergence}
\label{sect_convergence}
In this chapter we analyze the convergence properties
of the main algorithm.

\subsection{Convergence of the inner loop}
In this section we  prove finiteness of the inner loop with counter $k$. Since the outer loop counter $j$ is fixed,
we simplify notation and write $\x =\x^j$ for the current serious iterate,
and $\x^+ = \x^{j+1}$ for the next serious iterate, which is the result
of the inner loop. 

\begin{lemma}
\label{lem_cauchy}
Let $\z^k$ be the trial point at inner loop instant $k$, associated with the solution $\y^k$ of  the tangent program, 
and let $\g_k$ be the aggregate subgradient at $\y^k$.
Then there 
exists $\sigma >0$ depending only on $\theta\in (0,1)$, $M$, and the norm $\|\cdot\|$, such that
\begin{eqnarray}
\label{cauchy1}
f(\x)-\phi_k(\z^k,\x) \geq \sigma \|g_k\| \|\x-\z^k\|.
\end{eqnarray}
\end{lemma}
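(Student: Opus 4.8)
The plan is to establish the "Cauchy-point-type" estimate \eqref{cauchy1} by exploiting the convexity of the working model $\phi_k(\cdot,\x)$ together with the optimality conditions for the tangent program and the inequality \eqref{trial} defining trial steps. The key object is the aggregate subgradient $\g_k$: by the necessary optimality condition there is $\g_k\in\partial_1\phi_k(\y^k,\x)$ (I suppress the $i_C$-term by restricting attention to the case relevant here, or absorb it into a normal-cone vector) and a normal vector $\vv_k$ to the trust-region ball $B(\x,R_k)$ at $\y^k$ with $\g_k+\vv_k=0$, i.e.\ $-\g_k$ is normal to $B(\x,R_k)$ at $\y^k$. Since $\y^k$ lies on the sphere $\|\y^k-\x\|=R_k$ whenever $\g_k\neq 0$, the vector $-\g_k$ points "outward" and in particular $\g_k^\top(\x-\y^k)=\|\g_k\|\,\|\x-\y^k\|$ up to the norm-duality constant; this is where the dependence of $\sigma$ on the norm $\|\cdot\|$ enters, via the equivalence constants between $\|\cdot\|$ and its dual.

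Next I would chain the estimates. By convexity of $\phi_k(\cdot,\x)$ and $\g_k\in\partial_1\phi_k(\y^k,\x)$,
\[
\phi_k(\x,\x)\geq \phi_k(\y^k,\x)+\g_k^\top(\x-\y^k),
\]
so, using $\phi_k(\x,\x)=f(\x)$ (property (ii) of the working model), we get
\[
f(\x)-\phi_k(\y^k,\x)\geq \g_k^\top(\x-\y^k).
\]
Combining this with the normality of $-\g_k$ at $\y^k$ on the ball of radius $R_k$ yields $f(\x)-\phi_k(\y^k,\x)\geq c_1\|\g_k\|\,R_k$ for a norm-dependent constant $c_1>0$ (with $R_k=\|\x-\y^k\|$ when $\g_k\neq0$, and the inequality being trivial when $\g_k=0$). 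Then I invoke \eqref{trial}: $f(\x)-\phi_k(\z^k,\x)\geq\theta\big(f(\x)-\phi_k(\y^k,\x)\big)\geq\theta c_1\|\g_k\|\,\|\x-\y^k\|$. Finally, since $\z^k\in B(\x,M\|\x-\y^k\|)$ we have $\|\x-\z^k\|\leq M\|\x-\y^k\|$, hence $\|\x-\y^k\|\geq \tfrac{1}{M}\|\x-\z^k\|$, and we arrive at $f(\x)-\phi_k(\z^k,\x)\geq \tfrac{\theta c_1}{M}\|\g_k\|\,\|\x-\z^k\|$, giving the claim with $\sigma=\theta c_1/M$.

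The main obstacle I anticipate is the careful handling of two degenerate situations. First, the case $\y^k=\x$ (equivalently $\g_k=0$): here the right-hand side of \eqref{cauchy1} is zero and the left-hand side is nonnegative by the definition of the tangent program, so the inequality holds trivially, but one must note this separately since the "$\y^k$ on the sphere" argument is vacuous. Second, and more delicate, is extracting the clean geometric fact that $-\g_k$ being in the normal cone to $B(\x,R_k)$ at an interior-or-boundary point forces $\g_k^\top(\x-\y^k)\geq c_1\|\g_k\|\,\|\x-\y^k\|$ with $c_1$ depending only on the norm: for the Euclidean norm this is immediate with $c_1=1$, but for a general norm one needs the relationship between the subdifferential of $\|\cdot-\x\|$ and the dual norm, and must track that the constant is uniform over all $k$ (it is, since it depends only on the fixed norm). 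The presence of the constraint set $C$ only strengthens the optimality condition (it adds a normal-cone-to-$C$ term that, by convexity of $C\ni\x$, still respects the inequality $\g_k^\top(\x-\y^k)\geq\dots$), so it does not cause real trouble, but I would remark on it explicitly.
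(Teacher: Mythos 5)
Your argument is correct and follows essentially the same route as the paper's proof: the subgradient inequality at $\y^k$ combined with the quantitative normality of $\vv_k=-\g_k$ to the trust-region ball (the angle/duality constant depending only on the norm), then condition (\ref{trial}) and $\|\x-\z^k\|\leq M\|\x-\y^k\|$ to pass from $\y^k$ to $\z^k$, yielding $\sigma=\sigma''\theta M^{-1}$ exactly as in the paper. Your explicit remarks on the degenerate case $\g_k=0$ and on the role of the normal cone to $C$ are sound and, if anything, slightly more careful than the published version.
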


\begin{proof}
Let $\|\cdot\|$ be the norm used in the trust-region tangent program, $|\cdot|$ the standard Euclidian norm.
Since $\y^k$ is an optimal solution of (\ref{tangent}), we have
$0=\g_k+\vv_k$, where $\g_k\in \partial\left( \phi_k(\cdot,\x)+i_C \right)(\y^k)$ and  $\vv_k$ a normal vector to $B(\x,R_k)$ at $\y^k$. 
By the subgradient inequality,
\[
\g_k^\top (\x-\y^k) \leq \phi_k(\x,\x)-\phi_k(\y^k,\x) = f(\x)-\phi_k(\y^k,\x).
\]
Now the angle between the vector $\y^k-\x$ and the normal $\vv_k$ to the $\|\cdot\|$-ball $B(\x,R_k)$ at $\y^k\in \partial B(\x,R_k)$
is strictly less than $90^\circ$. More precisely, there exists $\sigma'\in (0,1)$, depending only on the geometry
of the ball $B(0,1)$, such that $\cos \angle(\uu_k, \vv_k) \geq \sigma'$ for all such vectors $\uu_k,\vv_k$. But then
$\g_k^\top (\x-\y^k)=\vv_k^\top(\y^k-\x) \geq \sigma' |\vv_k| |\y^k-\x| \geq \sigma'' \|\vv_k\| \|\y^k-\x\|$ for some $\sigma''\in (0,1)$ still
depending only on the geometry of the norm $\|\cdot\|$. Invoking (\ref{trial}) for the trial point $\z^k$, and using $\|x-\z^k\| \leq M\|x-\y^k\|$, we get
(\ref{cauchy1}) with $\sigma = \sigma''\theta M^{-1}$.
\end{proof}

\begin{lemma}
Suppose the inner loop at $\x$ with trial point $\z^k$ at inner loop counter $k$
and solution $\y^k$ of the tangent program {\rm (\ref{tangent})}  turns infinitely,
and the trust-region radius $R_k$ stays bounded away from 0. Then $\x$ is a critical point of {\rm (\ref{program})}.
\end{lemma}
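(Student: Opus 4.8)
The plan is to argue by contradiction: suppose the inner loop turns infinitely, the radius $R_k \geq \underline R > 0$ for all $k$, but $\x$ is \emph{not} critical. Since the radius is only ever halved in Step 7 when $\widetilde\rho_k \geq \widetilde\gamma$, boundedness away from $0$ forces $\widetilde\rho_k < \widetilde\gamma$ for all $k$ past some index, i.e. eventually $R_{k+1}=R_k=R$ is constant. From $\widetilde\rho_k < \widetilde\gamma < 1$ together with $\rho_k < \gamma < \widetilde\gamma$ (all steps are null), I extract the two crucial gap inequalities at the null step $\z^k$:
\[
f(\x)-f(\z^k) < \gamma\bigl(f(\x)-\phi_k(\z^k,\x)\bigr), \qquad
f(\x)-\phi(\z^k,\x) < \widetilde\gamma\bigl(f(\x)-\phi_k(\z^k,\x)\bigr).
\]
The strategy is then to show the ``predicted decrease'' $t_k := f(\x)-\phi_k(\z^k,\x) \geq 0$ must tend to $0$, and simultaneously that $\liminf_k t_k > 0$ if $\x$ is not critical — the contradiction.

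To see that $\x$ not critical gives $\liminf_k t_k>0$: using Lemma~\ref{lem_cauchy}, $t_k \geq \sigma\|g_k\|\,\|\x-\z^k\|$ where $g_k$ is the aggregate subgradient. If $\x$ is not critical there is $\eta>0$ with $\mathrm{dist}(0,\partial(\,f+i_C)(\x)) \geq \eta$ (one must also invoke $(M_1)$/Proposition part (iii) so that a subgradient of the model at $\x$ would lie in $\partial f(\x)$). The delicate point is that $g_k \in \partial_1\phi_k(\cdot,\x)(\y^k)$ is a subgradient at $\y^k$, not at $\x$; one shows $\y^k$ cannot be too close to $\x$ infinitely often, or else $g_k$ would be a near-subgradient at $\x$ forcing $\|g_k\|\gtrsim\eta$, and then $t_k \gtrsim \sigma\eta\,\|\x-\z^k\|$ — so either way $t_k$ stays bounded below unless $\z^k\to\x$, which I treat next.

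To see that $t_k\to 0$: the standard bundle argument. Along null steps the working models only grow, $\phi_k \leq \phi_{k+1} \leq \phi$, and the constant radius $R$ means all iterates $\y^k,\z^k$ stay in the compact set $C\cap B(\x,R)$ (using $\|\x-\z^k\|\leq M\|\x-\y^k\|\leq MR$). One uses the cutting-plane mechanism: after the null step at $\z^k$ we add $(a_k,g_k)$ with $a_k+g_k^\top(\z^k-\x)=\phi(\z^k,\x)$, so that $\phi_{k+1}(\z^k,\x)=\phi(\z^k,\x)$. Combining this ``exactness at the previous trial point'' with the recursion for $\phi_{k+1}(\y^{k+1},\x)$, the convexity of $\phi_{k+1}$, and the trial-step / Cauchy estimate, one derives that the sequence $\phi_k(\y^k,\x)$ is monotonically nondecreasing and bounded above by $f(\x)$, hence convergent, and that consecutive differences control $\|\y^{k+1}-\y^k\|$ (or $\|\z^k-\y^{k+1}\|$), forcing $\phi(\z^k,\x)-\phi_k(\z^k,\x)\to 0$. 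Then the inequality $\widetilde\rho_k<\widetilde\gamma$ rewrites as $f(\x)-\phi(\z^k,\x) < \widetilde\gamma\, t_k$, i.e. $(1-\widetilde\gamma)t_k < \phi(\z^k,\x)-\phi_k(\z^k,\x) \to 0$, hence $t_k\to 0$. This also forces $\|\x-\z^k\|\to 0$ via Lemma~\ref{lem_cauchy} only if $\|g_k\|$ were bounded below — so in fact one gets $t_k\to 0$ directly, contradicting $\liminf t_k>0$.

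The main obstacle I anticipate is the bookkeeping that reconciles the two halves: the ``$t_k\to 0$'' half needs the radius to be \emph{eventually constant} (not merely bounded below) so that all the compactness and the monotone-model argument apply on a fixed ball, and one must rule out the degenerate possibility that $\widetilde\rho_k \geq \widetilde\gamma$ infinitely often while $R_k$ still stays bounded below — impossible, since each such occurrence halves $R_k$ permanently. The genuinely technical step is the semicontinuity/subgradient-transfer argument showing that a near-minimizer $\y^k$ of $\phi_k$ over the trust region, when $t_k$ is small, yields an approximate critical point of $f+i_C$ at $\x$: this is where axioms $(M_1)$ and $(M_3)$ and the normal-cone optimality condition $0=g_k+v_k$ from the tangent program all get used, and it must be done carefully because $g_k$ lives at $\y^k$. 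Once $\z^k\to\x$ and $t_k\to0$ are established, passing to the limit in $f(\x)-\phi_k(\z^k,\x)\geq\sigma\|g_k\|\|\x-\z^k\|$ together with upper semicontinuity of the Clarke subdifferential yields $0\in\partial(f+i_C)(\x)$, i.e. criticality.
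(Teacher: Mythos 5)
Your first half is on target: the reduction to ``eventually $\widetilde\rho_k<\widetilde\gamma$ and $R_k\equiv R_{k_0}$'', and the rewriting of $\widetilde\rho_k<\widetilde\gamma$ as $(1-\widetilde\gamma)\,t_k<\phi(\z^k,\x)-\phi_k(\z^k,\x)$, is exactly the engine of the paper's proof (there it appears as: numerator and denominator of $\widetilde\rho_k$ converge to the same limit, which must be $0$ since $\widetilde\rho_k<\widetilde\gamma<1$). But both of your closing arguments have genuine gaps. First, your justification that the model gap $\phi(\z^k,\x)-\phi_k(\z^k,\x)$ tends to $0$ imports proximal-bundle machinery --- monotone subproblem values, consecutive differences controlling $\|\y^{k+1}-\y^k\|$ --- which does not survive the passage to trust regions: the tangent program (\ref{tangent}) has no strong convexity, so nothing forces the minimum of $\phi_{k+1}$ to stay near $\y^k$, and the algorithm explicitly allows dropping old planes, so $\phi_k\leq\phi_{k+1}$ need not even hold. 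The paper avoids all of this by compactness: everything lives in the fixed set $C\cap B(\x,R_{k_0})$, so one extracts $\z^k\to\z$ along a subsequence, uses the freshly drawn cutting plane to get exactness of the model at the previous trial point, and concludes by continuity of the convex function $\phi(\cdot,\x)$ that numerator and denominator of $\widetilde\rho_k$ share the limit $\phi(\x,\x)-\phi(\z,\x)$, which must therefore vanish.

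Second, and more seriously, your route from $t_k\to0$ to criticality (equivalently, from non-criticality to $\liminf t_k>0$) is borrowed from the wrong regime. In this lemma the radius is bounded \emph{below}, so $\y^k$ and $\z^k$ do not converge to $\x$; the Cauchy-type inequality $t_k\geq\sigma\|\g_k\|\,\|\x-\z^k\|$ therefore yields no positive lower bound on $t_k$ even if $\|\g_k\|\geq\eta$, and ``upper semicontinuity of the Clarke subdifferential at $\x$'' is unavailable because the aggregate subgradients live at points $\y^k$ that may stay a fixed distance from $\x$. (That subgradient-transfer argument is precisely the one used in the companion lemma where $R_k\to0$.) The correct finish, and the one the paper uses, is different: once $t_k\to0$, condition (\ref{trial}) gives $f(\x)-\phi_k(\y^k,\x)\leq\theta^{-1}t_k\to0$, so the optimal values of the tangent programs converge to $f(\x)=\phi(\x,\x)$; since $\phi_k\leq\phi$ and $\x$ is feasible, this forces $\min\{\phi(\y,\x):\y\in C\cap B(\x,R_{k_0})\}=\phi(\x,\x)$, i.e.\ $\x$ itself minimizes the convex function $\phi(\cdot,\x)+i_C$ over a ball centered at $\x$ of radius $R_{k_0}>0$. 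The ball constraint is inactive at its own center, so $0\in\partial_1\phi(\x,\x)+N_C(\x)\subset\partial(f+i_C)(\x)$ by $(M_1)$; no limit of subgradients along the $\y^k$ is needed.
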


\begin{proof}
We have $\rho_k < \gamma$ for all $k$. Since $\liminf_{k\to\infty} R_k > 0$, 
and since the trust-region radius is only reduced when $\widetilde{\rho}_k \geq \widetilde{\gamma}$,
and is never increased during the inner loop, we conclude that there
exists $k_0$ such that $\widetilde{\rho}_k < \widetilde{\gamma}$ for all $k\geq k_0$,
and also
$R_k = R_{k_0}>0$ for all $k\geq k_0$.

As $\z^k$, $\y^k\in B(\x,R_{k_0})$, we can extract an infinite subsequence $k\in \mathcal K$ such that $\z^k\to \z$,
$\y^k\to \y$, $k\in \mathcal K$.
Since we are drawing cutting planes at $\z^k$, we have $\phi_k(\z^k,\x)=\phi(\z^k,\x)=m_k(\z^k,\x)$,
and then
$\phi_k(\z^k,\x)\to \phi(\z,\x)$. Therefore the numerator and denominator
in the quotient $\widetilde{\rho}_k$ both converge to
$\phi(\x,\x)-\phi(\z,\x)$, $k\in\mathcal K$. Since $\widetilde{\rho}_k < \widetilde{\gamma} < 1$  for all $k$, this could only
mean $\phi(\x,\x)-\phi(\z,\x)=0$. 

Now by condition (\ref{trial}) we have 
$$\phi(\x,\x)-\phi_k(\y^k,\x)\leq \theta^{-1}\left(\phi(\x,\x)-\phi_k(\z^k,\x)\right)\to 0,$$ hence
$\limsup_{k\in \mathcal K} \phi(\x,\x)-\phi_k(\y^k,\x) \leq 0$. On the other hand, $\phi_k(\y^k,\x) \leq \phi(\x,\x)$ 
since $\y^k$ solves the tangent program, hence $\phi_k(\y^k,\x)\to \phi(\x,\x)$, too.

By the necessary optimality condition for the tangent program
(\ref{tangent}) there exist $\g_k\in \partial_1\phi_k(\y^k,\x)$ and a normal vector $\vv_k$ to $C \cap B(\x,R_{k_0})$
at $\y^k$ such that
$0=\g_k+\vv_k$. By boundedness of the $\y^k$ and local boundedness of the subdifferential,
the sequence $g_k$ is bounded, and hence so is the sequence $\vv_k$. Passing to yet another subsequence $k\in \mathcal K'
\subset \mathcal K$, we may assume $\g_k\to \g$, $\vv_k\to\vv$, and by upper semi-continuity of the subdifferential,
$\g\in \partial_1\phi(\y,\x)$, and
$\vv$ is in the normal cone to $C \cap B(\x,R_{k_0})$ at $\y$.
Since $0=\g+\vv$, we deduce that $\y$ is a critical point of the optimization program
$\min\{\phi(\y,\x): \y\in C \cap B(\x,R_{k_0})\}$, and since this is a convex program, $\y$ is a minimum. But from the previous
argument we have seen that $\phi(\y,\x)=\phi(\x,\x)$, and since $\x$ is admissible for that program, it is also
a minimum. 
A simple convexity argument now shows that $\x$ is a minimum of
 (\ref{tangent}). 
\end{proof}

\begin{lemma}
\label{lemma2}
Suppose the inner loop at $\x$  with trial point $\z^k$  and solution $\y^k$ of the tangent program at inner
loop counter $k$  turns forever,  
and $\liminf_{k\to \infty} R_k=0$.
Then $\x$ is a critical point of {\rm (\ref{program})}.
\end{lemma}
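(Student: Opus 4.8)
The plan is to exploit Lemma~\ref{lem_cauchy} together with the trust-region management. Since the inner loop turns forever we have $\rho_k < \gamma$ for all $k$, so every step is a null step and we keep adding cutting planes. The hypothesis $\liminf_{k\to\infty} R_k = 0$ means the radius is halved infinitely often, which by Step~7 happens exactly when $\widetilde\rho_k \geq \widetilde\gamma$. First I would argue that the reduction of the radius forces the model-predicted decrease $f(\x) - \phi_k(\z^k,\x)$ to go to $0$ along the subsequence $\mathcal K$ of indices where $R_{k+1} = \tfrac12 R_k$. Indeed, on that subsequence $\widetilde\rho_k \geq \widetilde\gamma$, i.e.
\[
f(\x) - \phi(\z^k,\x) \geq \widetilde\gamma\,\bigl(f(\x) - \phi_k(\z^k,\x)\bigr),
\]
while simultaneously $\rho_k < \gamma$, i.e. $f(\x) - f(\z^k) < \gamma\,(f(\x) - \phi_k(\z^k,\x))$. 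Combining these with axiom $(M_2)$ applied to the sequence $\z^k$ (note $\|\x - \z^k\| \leq M\|\x - \y^k\| \leq M R_k \to 0$ along $\mathcal K$, since $R_k$ is non-increasing and $\liminf R_k = 0$ forces $R_k \to 0$) should squeeze $f(\x) - \phi_k(\z^k,\x) \to 0$: the gap between $f(\z^k)$ and $\phi(\z^k,\x)$ is ${\rm o}(\|\z^k - \x\|)$, hence small relative to $f(\x) - \phi_k(\z^k,\x)$ unless the latter is itself ${\rm o}$ of the step length, and the step length is controlled by $R_k$.

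Next, by Lemma~\ref{lem_cauchy},
\[
\sigma \|\g_k\|\,\|\x - \z^k\| \leq f(\x) - \phi_k(\z^k,\x) \to 0 \quad (k\in\mathcal K).
\]
This does not yet give $\|\g_k\| \to 0$ because $\|\x - \z^k\|$ also tends to $0$. To extract criticality I would distinguish the usual two cases for the aggregate subgradient $\g_k = -\vv_k$: either $\y^k$ lies strictly inside the trust region (then $\vv_k$ has no trust-region component, so $\g_k$ is a genuine subgradient of $\phi_k(\cdot,\x) + i_C$ at $\y^k$ and one argues as in the previous lemma), or $\y^k$ is on the boundary $\|\y^k - \x\| = R_k$. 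In the boundary case one uses the standard trust-region trick: relate $f(\x) - \phi_k(\y^k,\x)$ to $\min\{R_k, 1\}$ times a measure of noncriticality. Concretely, by convexity of $\phi_k(\cdot,\x)$ and $(M_1)$, for any $\g_0 \in \partial f(\x)$ that was put into $\mathcal G_1$ as an exactness plane we have $\phi_k(\y,\x) \geq f(\x) + \g_0^\top(\y - \x)$, so the predicted decrease at the optimal $\y^k$ dominates the predicted decrease of the linear model over $C \cap B(\x,R_k)$, which is the quantity that vanishes only at a critical point. Passing to a subsequence with $\y^k \to \x$ (forced since $\|\y^k - \x\| \leq R_k \to 0$), $\g_k \to \g$, $\vv_k \to \vv$, and using outer semicontinuity of $\partial_1\phi(\cdot,\x)$ and of the normal cone, one concludes $0 \in \partial_1\phi(\x,\x) + N_C(\x) \subset \partial f(\x) + N_C(\x)$, i.e. $\x$ is critical.

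The main obstacle I anticipate is the first step: cleanly showing $f(\x) - \phi_k(\z^k,\x) \to 0$ along $\mathcal K$. The subtlety is that $(M_2)$ only gives $f(\z^k) \leq \phi(\z^k,\x) + \epsilon_k\|\z^k - \x\|$ with $\epsilon_k \to 0$, so from $\widetilde\rho_k \geq \widetilde\gamma$ and $\rho_k < \gamma$ one gets
\[
(\widetilde\gamma - \gamma)\,\bigl(f(\x) - \phi_k(\z^k,\x)\bigr) \leq f(\z^k) - \phi(\z^k,\x) \leq \epsilon_k\|\z^k - \x\|,
\]
and then Lemma~\ref{lem_cauchy} turns the right-hand side into $\epsilon_k \cdot \sigma^{-1}\|\g_k\|^{-1}(f(\x)-\phi_k(\z^k,\x))$ — which only helps if $\|\g_k\|$ stays bounded below, creating a potential circularity. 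I would resolve this by instead bounding $\|\z^k - \x\| \leq M R_k$ directly, giving $(\widetilde\gamma-\gamma)(f(\x)-\phi_k(\z^k,\x)) \leq \epsilon_k M R_k$, and then handling the contribution of $R_k$ separately: either $f(\x) - \phi_k(\z^k,\x)$ is already ${\rm o}(R_k)$, in which case by Lemma~\ref{lem_cauchy} again $\|\g_k\| \to 0$ directly through the trial-step inequality bounding $\|\y^k - \x\|$ from below by a fixed fraction of $R_k$ when $\y^k$ is on the boundary, or $\y^k$ is interior and the previous lemma's argument applies verbatim. Organizing this dichotomy carefully is the crux; the rest is the by-now routine compactness-and-semicontinuity passage to the limit.
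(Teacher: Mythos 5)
Your plan assembles exactly the right pieces --- $R_k\to 0$ because the radius is never increased in the inner loop, $\widetilde{\rho}_k\geq\widetilde{\gamma}$ along the subsequence where it is halved, Lemma \ref{lem_cauchy}, and axiom $(M_2)$ --- and these are precisely the ingredients of the paper's proof. But the obstacle you flag at the end is self-inflicted, and the dichotomy you propose to resolve it is unnecessary. The quantity that must tend to $0$ is not $f(\x)-\phi_k(\z^k,\x)$ but the aggregate subgradient norm $\|\g_k\|$, and your own two displayed inequalities deliver this directly: from $\widetilde{\rho}_k\geq\widetilde{\gamma}$ and $\rho_k<\gamma$ you get $(\widetilde{\gamma}-\gamma)\bigl(f(\x)-\phi_k(\z^k,\x)\bigr)\leq f(\z^k)-\phi(\z^k,\x)\leq\epsilon_k\|\z^k-\x\|$, while Lemma \ref{lem_cauchy} gives $f(\x)-\phi_k(\z^k,\x)\geq\sigma\|\g_k\|\,\|\x-\z^k\|$; dividing by $\|\x-\z^k\|$ yields $\|\g_k\|\leq\epsilon_k/\bigl(\sigma(\widetilde{\gamma}-\gamma)\bigr)\to 0$ along that subsequence. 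There is no circularity: the ``only helps if $\|\g_k\|$ stays bounded below'' worry is exactly the hypothesis of the paper's equivalent argument by contradiction (assume $\|\g_k\|\geq\eta>0$, deduce $\widetilde{\rho}_k\leq\rho_k+\epsilon_k/(\sigma\eta)<\widetilde{\gamma}$ for large $k$, contradiction). Consequently the interior/boundary case distinction and the classical Cauchy-decrease estimate involving $\min\{R_k,1\}$ are not needed --- the latter is, in fact, precisely the reasoning the paper demonstrates to fail for nonsmooth $f$ --- since Lemma \ref{lem_cauchy} already treats both cases uniformly through the aggregate subgradient.

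Two further points in the final limit passage deserve more care than ``outer semicontinuity of $\partial_1\phi(\cdot,\x)$.'' Since $\g_k\in\partial\left(\phi_k(\cdot,\x)+i_C\right)(\y^k)$ with the working model $\phi_k$ changing at every step, semicontinuity of the fixed multifunction $\partial_1\phi(\cdot,\x)$ does not apply directly. The paper splits $\g_k=\p_k+\q_k$ with $\p_k\in\partial_1\phi_k(\y^k,\x)$, $\q_k\in N_C(\y^k)$, uses the subgradient inequality $\p_k^\top\h\leq\phi_k(\y^k+\h,\x)-\phi_k(\y^k,\x)\leq\phi(\y^k+\h,\x)-\phi_k(\y^k,\x)$ together with $\phi_k(\y^k,\x)\to f(\x)$, and passes to the limit to obtain $\p\in\partial_1\phi(\x,\x)\subset\partial f(\x)$, while $\q\in N_C(\x)$ follows from $\y^k\to\x$. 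The convergence $\phi_k(\y^k,\x)\to f(\x)$ (equivalently your first step) is obtained not from the $\widetilde{\rho}_k$ sandwich but from squeezing $\phi_k$ between the exactness plane $m_0(\cdot,\x)$, which minorizes every $\phi_k(\cdot,\x)$, and the ideal model $\phi$ from above; it is needed only for this limit passage, not for extracting $\|\g_k\|\to 0$.
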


\begin{proof}
This proof uses (\ref{cauchy1}) obtained in Lemma \ref{lem_cauchy}.
We are in the case where $\widetilde{\rho}_k \geq \widetilde{\gamma}$ for infinitely many
$k\in \mathcal N$. Since $R_k$ is never increased in the inner loop, we have  $R_k\to 0$.
Hence $\y^k,\z^k\to \x$ as $k\to\infty$. 

We claim that $\phi_k(\z^k,\x)\to f(\x)$. Indeed, we clearly have
$\limsup_{k\to\infty} \phi_k(\z^k,\x)\leq \limsup_{k\to\infty} \phi(\z^k,\x) =\lim_{k\to\infty} \phi(\z^k,\x)=f(\x)$.
On the other hand, the exactness plane $m_0(\cdot,\x)=f(\x) + g_0^\top (\cdot-\x)$ is an affine minorant
of $\phi_k(\cdot,\x)$ at all times $k$, hence
$f(\x) = \lim_{k\to\infty} m_0(\y^k,\x) \leq \liminf_{k\to\infty} \phi_k(\y^k,\x)$, and the two together show
$\phi_k(\z^k,\x)\to f(\x)$. 

By condition (\ref{cauchy1})  we have
$f(\x)-\phi_k(\z^k,\x) \geq \sigma \|\g_k\| \|\x-\z^k\|$, where $\g_k\in
\partial\left(  \phi_k(\cdot,\x)+i_C\right)(\y^k)$ is the aggregate subgradient. Now assume that $\|\g_k\|\geq \eta > 0$ for all
$k$. Then $f(\x)-\phi_k(\z^k,\x) \geq \sigma\eta \|x-\z^k\|$.

Since $\z^k\to \x$, using axiom $(M_2)$ there exist $\epsilon_k\to 0^+$ such that
$f(\z^k)-\phi(\z^k,\x) \leq \epsilon_k \|x-\z^k\|$. But then
\[
\widetilde{\rho}_k = \rho_k + \frac{f(\z^k)-\phi(\z^k,\x)}{f(\x)-\phi_k(\z^k,\x)} \leq
\rho_k + \frac{\epsilon_k\|x-\z^k\|}{\sigma\eta \|x-\z^k\|}=\rho_k + \epsilon_k/(\sigma\eta).
\]
Since $\epsilon_k\to 0$, $\rho_k<\gamma$, we have $\limsup_{k\to\infty} \widetilde{\rho}_k
\leq \gamma < \widetilde{\gamma}$, contradicting the fact that
$\widetilde{\rho}_k > \widetilde{\gamma}$ for infinitely many $k$. Hence $\| \g_k\|\geq \eta > 0$
was impossible.

Select $k\in \mathcal K$ such that $\g_k\to 0$. Write $\g_k=\p_k+\q_k$
with $\p_k\in \partial_1 \phi_k(\y^k,\x)$ and $q_k\in N_C(\y^k)$. Using the boundedness of the $\y^k$
extract another subsequence $k\in \mathcal K'$ such that $p_k\to p$, $q_k\to q$. Since $\y^k\to \x$,
we have $q\in N_C(\x)$. We argue that $p\in \partial f(\x)$. Indeed,
for any test vector $\h$ the subgradient inequality gives
\[
p_k^\top h\leq \phi_k(\y^k+\h,\x)-\phi_k(\y^k,\x) \leq
\phi(\y^k+\h,\x)-\phi_k(\y^k,\x).
\]
Since $\phi_k(\y^k,\x)\to f(\x)=\phi(\x,\x)$, passing to the limit gives
\[
\p^\top \h\leq \phi(\x+\h,\x)-\phi(\x,\x),
\]
proving $\p\in\partial_1 \phi(\x,\x)\subset \partial f(\x).$ This proves
that $\x$ is a critical point of (\ref{program}).
\end{proof}

\subsection{Convergence of the outer loop}
In this section we prove our main convergence result. 

\begin{theorem}
\label{theorem1}
Suppose $f$ has a strict first-order model $\phi$.  Let $\x^1\in C$ be such that
$\{\x\in C: f(\x) \leq f(\x^1)\}$ is bounded. Let $\x^j\in C$ be the sequence of iterates
generated by Algorithm {\rm \ref{algo1}}. Then every accumulation point
$\x^*$ of the $\x^j$ is a critical point of {\rm (\ref{program})}.
\end{theorem}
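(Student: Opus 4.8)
The plan is to combine the three inner-loop lemmas (Lemmas \ref{lem_cauchy}, 1, and \ref{lemma2}) with a standard outer-loop argument that exploits the monotone decrease of $f(\x^j)$ and the boundedness of the sublevel set. First I would observe that the inner loop always terminates finitely: if it turned forever, then either $\liminf_k R_k > 0$ or $\liminf_k R_k = 0$, and in both cases the preceding two lemmas force $\x^j$ to be a critical point, contradicting the fact that the algorithm did not stop at step 3. Hence at each outer iteration a genuine serious step is produced with $\rho_k \geq \gamma$, so that
\[
f(\x^j) - f(\x^{j+1}) \geq \gamma\bigl(f(\x^j) - \phi_k(\z^k,\x^j)\bigr) \geq 0,
\]
and the sequence $f(\x^j)$ is nonincreasing. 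Since the sublevel set $\{f \leq f(\x^1)\}$ is bounded, all $\x^j$ lie in a compact set, $f(\x^j)$ converges to some $f^*$, and consequently $f(\x^j) - \phi_{k_j}(\z^j,\x^j) \to 0$, where I write $\z^j, \y^j, R_j, \phi_{k_j}$ for the terminal quantities of the $j$-th inner loop.

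Next I would fix an accumulation point $\x^*$, pass to a subsequence $\x^j \to \x^*$, and try to show $0 \in \partial f(\x^*) + N_C(\x^*)$. The key quantity is the terminal aggregate subgradient $\g_j \in \partial(\phi_{k_j}(\cdot,\x^j) + i_C)(\y^j)$, for which Lemma \ref{lem_cauchy} gives
\[
f(\x^j) - \phi_{k_j}(\z^j,\x^j) \geq \sigma \|\g_j\| \, \|\x^j - \z^j\|.
\]
The delicate point is that $\|\x^j - \z^j\|$ need not go to zero, so this inequality alone does not immediately yield $\g_j \to 0$; one must control the memory trust-region radius $R_j^\sharp$. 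The argument I would run is a dichotomy. If $\liminf_j R_j^\sharp = 0$, then along a subsequence $R_j \to 0$ (the inner-loop radius starts at $R_j^\sharp$ and only decreases), so $\z^j, \y^j \to \x^*$, and then the argument of Lemma \ref{lemma2} — using $(\widetilde M_2)$ in place of $(M_2)$ since $f(\x^j) - \phi(\z^j,\x^j)$ must be estimated with both arguments moving — shows that $\|\g_j\|$ cannot stay bounded away from $0$, hence has a subsequence tending to $0$; splitting $\g_j = \p_j + \q_j$ with $\p_j \in \partial_1\phi_{k_j}(\y^j,\x^j)$, $\q_j \in N_C(\y^j)$ and passing to the limit via the subgradient inequality (exactly as at the end of Lemma \ref{lemma2}, but now with $\x^j \to \x^*$ rather than a fixed $\x$) gives $0 \in \partial f(\x^*) + N_C(\x^*)$. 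If instead $\liminf_j R_j^\sharp =: \underline R > 0$, then eventually $R_j \geq$ some fixed positive number; here I would argue that $\rho_j \geq \gamma$ together with $f(\x^j) - \phi_{k_j}(\z^j,\x^j) \to 0$ and the strictness axiom $(\widetilde M_2)$ forces $\widetilde\rho_j \to $ a limit $\leq \gamma < \widetilde\gamma$ unless $\|\g_j\| \to 0$ along a subsequence — using Lemma \ref{lem_cauchy} to bound $\|\x^j - \z^j\| \leq (f(\x^j)-\phi_{k_j}(\z^j,\x^j))/(\sigma\|\g_j\|)$ and then $(\widetilde M_2)$ on $f(\z^j) - \phi(\z^j,\x^j) \leq \epsilon_j \|\x^j - \z^j\|$ — and then the same limiting procedure applies.

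The main obstacle I anticipate is precisely the case $\liminf_j R_j^\sharp > 0$: here the trial steps do not shrink, so one genuinely needs the \emph{strict} model hypothesis $(\widetilde M_2)$ to estimate $f(\z^j) - \phi(\z^j,\x^j)$ with the two arguments $\z^j$ and $\x^j$ decoupled, whereas the plain axiom $(M_2)$ would only handle $\z^j \to \x^j$ with $\x^j$ fixed. Getting the bookkeeping right — tracking which inner-loop index $k_j$ is terminal, ensuring the cutting-plane identity $\phi_{k_j}(\z^j,\x^j) = \phi(\z^j,\x^j)$ still holds for the accepted step (it does, since a cut at $\z^j$ lies in $\mathcal G_{k_j}$ or $\z^j = \y^j$ up to the $\theta$-slack, which may require an extra estimate), and combining the two regimes into a single subsequence on which $\|\g_j\| \to 0$ — is where the real care is needed; the passage to the limit itself is routine upper semicontinuity of the Clarke subdifferential and the normal cone, together with axiom $(M_1)$.
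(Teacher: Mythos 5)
Your overall architecture (monotone descent, terminal aggregate subgradients $\g_j$, Lemma \ref{lem_cauchy}, a dichotomy on whether the radii stay bounded away from zero, and a final limit passage using $(M_1)$, $(M_3)$ and upper semicontinuity of the subdifferential) matches the paper's, and you correctly identify $(\widetilde{M}_2)$ as the place where strictness enters. But there is a genuine gap in the case $R_{k_j}\to 0$: you propose to rerun ``the argument of Lemma \ref{lemma2}'' on the terminal quantities, i.e.\ to derive a contradiction from $\widetilde{\rho}_{k_j}\le\rho_{k_j}+\epsilon_j/(\sigma\eta)$. That contradiction only exists at an inner-loop index where simultaneously $\rho<\gamma$ (null step) and $\widetilde{\rho}\ge\widetilde{\gamma}$ (the radius was actually cut); at the accepted step you have $\rho_{k_j}\ge\gamma$, so the bound $\widetilde{\rho}_{k_j}\le\rho_{k_j}+\epsilon_j/(\sigma\eta)$ contradicts nothing, and nothing in the algorithm forces $\widetilde{\rho}_{k_j}\ge\widetilde{\gamma}$ at acceptance. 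The same objection applies to your treatment of the regime $\liminf_j R_j^\sharp>0$, where you again try to extract a contradiction from $\widetilde{\rho}_j<\widetilde{\gamma}$; there the correct (and easier) argument is that the active trust-region constraint gives $\|\x^j-\y^{k_j}\|=R_{k_j}\ge R_0$, which together with $f(\x^j)-\phi_{k_j}(\y^{k_j},\x^j)\to 0$ and the Cauchy-type inequality yields $\g_j\to 0$ directly (the paper gets this from the summability of $\sum_j\|\g_j\|\,\|\x^j-\tilde{\x}^{j+1}\|$).

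The paper repairs the $R_{k_j}\to 0$ case in two stages that are absent from your plan. First, if the radius was reduced at least once during the $j$th inner loop, one works at the index $k_j-\nu_j$ of the \emph{last} reduction, where both $\widetilde{\rho}_{k_j-\nu_j}\ge\widetilde{\gamma}$ and $\rho_{k_j-\nu_j}<\gamma$ hold, and only there is your Lemma-\ref{lemma2}-style contradiction legitimate. Second --- and this is the heart of the proof --- the small terminal radius may have been inherited from the memory $R_j^\sharp$ with no reduction at all during the $j$th inner loop; then the index carrying the contradiction lives in an earlier outer iteration $j'<j$, and one must transport the smallness of the aggregate subgradient found there back to the accumulation point $\x^*$. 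The paper does this with the interlacing argument of part 6): the summability $\sum_j\|\g_j\|\,\|\x^j-\tilde{\x}^{j+1}\|<\infty$ combined with the lower bound $\|\g_i\|\ge\epsilon$ on the stretch $[j''+1,j]$ forces $\x^{j''+1}\to\x^*$, so the small subgradients accumulate at $\x^*$ and $0\in\partial(f+i_C)(\x^*)$ follows. Without this device the proof does not close, and your proposal contains no substitute for it.
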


\begin{proof}
1)
Without loss we consider the case where the algorithm generates an infinite sequence $\x^j\in C$
of serious iterates. Suppose that at  outer loop counter $j$ the inner loop finds a successful trial
step at inner loop counter $k_j$, that is, $\z^{k_j} = \x^{j+1}$, where the corresponding
solution of the tangent program is $\tilde{\x}^{j+1}=\y^{k_j}$. Then $\rho_{k_j}\geq \gamma$, which means
\begin{equation}
\label{descent1}
f(\x^j) - f(\x^{j+1}) \geq \gamma \left( f(\x^j) - \phi_{k_j}(\x^{j+1},\x^j) \right).
\end{equation}
Moreover, by condition (\ref{trial}) we have $\|\tilde{x}^{j+1}-x^j\|\leq M\|x^{j+1}-x^j\|$ and
\begin{equation}
\label{descent1a}
f(\x^j)-\phi_{k_j}(\x^{j+1},\x^j) \geq \theta \left( f(\x^j)-\phi_{k_j}(\tilde{\x}^{j+1},\x^j)  \right),
\end{equation}
and combining (\ref{descent1}) and (\ref{descent1a}) gives
\begin{eqnarray}
\label{descent}
f(\x^j)-f(\x^{j+1}) \geq \gamma\theta \left( f(\x^j)-\phi_{k_j}(\tilde{\x}^{j+1},\x^j)  \right).
\end{eqnarray}
Since $\y^{k_j}=\tilde{\x}^{j+1}$ is a solution of the $k_j$th tangent program (\ref{tangent}) of the
$j$th inner loop, there exist
$\g_j\in \partial \left( \phi_{k_j}(\cdot,\x^j)+i_C \right)(\tilde{\x}^{j+1})$ and a unit normal vector
$\vv_j$ to the ball $B(\x^j,R_{k_j})$ at $\tilde{\x}^{j+1}$ such that 
$$
\g_j + \|\g_j\| \vv_j = 0.
$$
We shall now analyze two types of infinite subsequences, those where the trust-region constraint is active at  $\tilde{\x}^{j+1}$, 
and those where it is inactive. 

2)
Let us start with the simpler  case of an infinite subsequence $\x^j$, $j\in J$, where $\|\x^j-\tilde{\x}^{j+1}\| < R_{k_j}$,
i.e., where the trust-region constraint is inactive.
There exist $\p_j\in \partial_1 \phi_{k_j}(\tilde{\x}^{j+1},\x^j)$ and $\q_j\in N_C(\tilde{\x}^{j+1})$
such that
\[
0 = \p_j +  \q_j.
\]
By the subgradient inequality,
applied to $\p_j\in \partial  \phi_{k_j}(\cdot,\x^j)(\tilde{\x}^{j+1})$, we have
\begin{align*}
-\q_j^\top (\x^j-\tilde{\x}^{j+1})=\p_j^{\top} (\x^j-\tilde{\x}^{j+1})
&\leq
\phi_{k_j}(\x^j,\x^j) - \phi_{k_j}(\tilde{\x}^{j+1},\x^j) \\
&= f(\x^j) - \phi_{k_j}(\tilde{\x}^{j+1},\x^j)
\leq \gamma^{-1} \theta^{-1}\left( f(\x^j)-f(\x^{j+1})\right),
\end{align*}
using (\ref{trial}).
Since $\p_j^{\top} (\x^j-\tilde{\x}^{j+1})=\q_j^\top (\tilde{\x}^{j+1}-\x^j) \geq 0$ by Kolmogoroff's inequality,
we deduce summability $\sum_{j\in J} \p_j^{\top} (\x^j-\tilde{\x}^{j+1})<\infty$, hence 
$\p_j^{\top} (\x^j-\tilde{\x}^{j+1})\to 0$, $j\in J$, and then also $\q_j^\top(\x^j-\tilde{\x}^{j+1})\to 0$. 

Let ${\h}$ be any test vector, then
\begin{align*}
\p_j^{\top} {\h} &\leq \phi_{k_j}(\tilde{\x}^{j+1}+{\h},x^j) - \phi_{k_j}(\tilde{\x}^{j+1},\x^j) \\
&\leq \phi(\tilde{\x}^{j+1}+{\h},\x^j) - f(\x^j) + f(\x^j) - \phi_{k_j}(\tilde{\x}^{j+1},\x^j) \\
&\leq \phi(\tilde{\x}^{j+1}+{\h},\x^j) - f(\x^j) + \gamma^{-1}\theta^{-1} \left( f(\x^j)-f(\x^{j+1}) \right).
\end{align*}
Now let ${\h}'$ be another test vector and put ${\h} = \x^j-\tilde{\x}^{j+1}+{\h}'$. Then
on substituting this expression we obtain
\[
\p_j^{\top} (\x^j-\tilde{\x}^{j+1}) + \p_j^{\top} {\h}' \leq \phi(\x^j+{\h}',x^j) - f(\x^j) + \gamma^{-1}\theta^{-1} \left(  f(\x^j)-f(\x^{j+1})\right).
\]
Passing to the limit, we have $\p_j^{\top} (\x^j-\tilde{\x}^{j+1})\to 0$ by the above, and
$f(\x^j)-f(\x^{j+1})\to 0$ by the construction of the descent method. Moreover, $\limsup_{j\in J} \phi(\x^j+{\h}',\x^j) \leq \phi(\x^*+{\h}',\x^*)$
by axiom $(M_3)$ and $\p_j\to \p$ for some $\p$. That shows
\[
\p^{\top} {\h}' \leq \phi(\x^*+{\h}',\x^*) - f(\x^*)=\phi(\x^*+{\h}',\x^*)-\phi(\x^*,\x^*).
\]
Since ${\h}'$ was arbitrary and $\phi(\cdot,\x^*)$ is convex, we deduce
$\p\in \partial_1 \phi(\x^*,\x^*)$, hence $\p\in \partial f(\x^*)$ by axiom $(M_1)$.

Now observe that $\tilde{\x}^{j+1}\to \tilde{\x}$ and $\q_j \to \q\in N_C(\tilde{\x})$. We wish to show that $\q\in N_C(\x^*)$.
Since $\q_j^\top (\x^j-\tilde{\x}^{j+1})\to 0$, we have $\q^\top (\x^*-\tilde{\x})=0$, but $\q\not=0$
and $\x^*-\tilde{\x}\not=0$. Now for any element
$\x\in C$ we have $\q^\top(\tilde{\x}-\x) \geq 0$ by Kolmogoroff's inequality. Hence
$\q^\top (\x^*-\x) = \q^\top(\tilde{\x}-\x) + \q^\top (\x^*-\tilde{\x}) =  \q^\top(\tilde{\x}-\x) \geq 0$,
so Kolmogoroff's inequality holds also at $\x^*$, proving $\q\in N_C(\x^*)$. We have shown that
$0 =\p + \q \in \partial \left( \phi(\cdot,\x^*)+i_C \right)(\x^*)$, hence
$\x^*$ is a critical point of (\ref{program}).

3) Let us now consider the more complicated case of an infinite subsequence,
where $\|\x^j-\tilde{\x}^{j+1}\|=R_{k_j}$ with
$\g_j\not=0$. In other words, the trust-region constraint is active at $\tilde{x}^{j+1}$.
Passing to a subsequence, we may assume $\x^j\to \x^*$,
and we have to show that $\x^*$ is critical.

Let $\uu_j$ be the unit vector $\uu_j = (\tilde{\x}^{j+1}-\x^j )/\|\tilde{\x}^{j+1}-\x^j\|$. Then if the norm $\|\cdot\|$ coincides with the Euclidian norm $|\cdot|$,
we have $\uu_j = \vv_j$. For other norms this is no longer the case,
but for any such norm there exists $\sigma > 0$ such that $\uu_j^\top \vv_j \geq \sigma > 0$
for all $j$. Then
\[
\g_j^\top (\x^j-\tilde{\x}^{j+1}) = - \|\x^j-\tilde{\x}^{j+1}\| \g_j^\top \uu_j
= \|\x^j-\tilde{\x}^{j+1}\| \|\g_j\| \vv_j^\top \uu_j \geq \sigma \| \g_j\| \|\x^j-\tilde{\x}^{j+1}\|.
\]
By the subgradient inequality, and using $\x^j,\tilde{\x}^{j+1}\in C$, we have
\[
\g_j^\top (\x^j-\tilde{\x}^{j+1}) \leq \phi_{k_j}(\x^j,\x^j) - \phi_{k_j}(\tilde{\x}^{j+1},\x^j) = f(\x^j) - \phi_{k_j}(\tilde{\x}^{j+1},\x^j). 
\]
Altogether
\begin{eqnarray}
\label{below}
f(\x^j) - \phi_{k_j}(\tilde{\x}^{j+1},\x^j) \geq \sigma \|\g_j\|\|\x^j-\tilde{\x}^{j+1}\|.
\end{eqnarray}
Combining this with (\ref{descent}) gives
\[
\|\g_j\|\|\x^j-\tilde{\x}^{j+1}\| \leq \sigma^{-1}\gamma^{-1}\theta^{-1}\left( f(\x^j)-f(\x^{j+1}) \right).
\]
Summing both sides from $j=1$ to $j=J$ gives
\[
\sum_{j=1}^J \|\g_j\| \|\x^j-\tilde{\x}^{j+1}\| \leq \sigma^{-1}\gamma^{-1}\theta^{-1} \left( f(\x^1) - f(\x^{J+1})\right).
\]
Since the values $f(\x^j)$ are decreasing
and $\{\x\in C: f(\x)\leq f(\x^1)\}$ is bounded, the sequence $\x^j$ must be bounded.
We deduce that the right hand side is bounded, hence the series on the left converges:
\begin{eqnarray}
\label{sum}
\sum_{j=1}^\infty \|\g_j\| \|\x^j-\tilde{\x}^{j+1}\| < \infty.
\end{eqnarray}
In particular, this implies $\|\g_j\|\|\x^j-\tilde{\x}^{j+1}\| \to 0$.
Using $\|x^j-\x^{j+1}\| \leq M\|\x^j-\tilde{\x}^{j+1}\|$, we also have
$\|\g_j\|\|\x^j-{\x}^{j+1}\| \to 0$.

We shall now have to distinguish two subcases. Either there exists a subsequence $J'\subset J$
such that $R_{k_j} \to 0$ as $j\in J'$, or $R_{k_j} \geq R_0 > 0$ for all $j\in J$.
The second subcase is discussed in 4) below,
the first is handled in 5) - 6).

4)
Let us 
consider the sub-case of an infinite subsequence $j\in J$ where $\|\x^j-\tilde{\x}^{j+1}\|=R_{k_j} \geq R_0 > 0$ for every $j\in J$. 
Going back to (\ref{sum}), we see that we now must have
$\g_j\to 0$, as $\x^j-\tilde{\x}^{j+1}\not\to 0$. Let
us write $\g_j = \p_j + \q_j$, where
$\p_j\in \partial_1 \phi_{k_j}(\tilde{\x}^{j+1},\x^j)$ and $\q_j\in N_C(\tilde{\x}^{j+1})$.
Then 
\[
\p_j^\top (\x^j-\tilde{\x}^{j+1}) \leq \phi_{k_j}(\x^j,\x^j)-\phi_{k_j}(\tilde{\x}^{j+1},\x^j)
\leq \gamma^{-1} \theta^{-1}\left( f(\x^j)-f(\x^{j+1}\right).
\]
Now $\g_j^\top (\x^j-\tilde{\x}^{j+1}) = \p_j^\top(\x^j -\tilde{\x}^{j+1}) + \q_j^\top (\x^j-\tilde{\x}^{j+1}) \leq p_j^\top (\x^j-\tilde{\x}^{j+1})$,
because Kolmogoroff's inequality for $\tilde{\x}^{j+1}\in C$ and $\q_j\in N_C(\tilde{\x}^{j+1})$ gives  $\q_j^\top (\tilde{\x}^{j+1}-\x^j)\geq 0.$
Hence we have
\[
\g_j^\top (\x^j-\tilde{\x}^{j+1}) \leq \p_j^\top (\x^j-\tilde{\x}^{j+1}) \leq \gamma^{-1}\theta^{-1} \left( f(\x^j)-f(\x^{j+1} )\right),
\] 
so $\p_j^\top (\x^j-\tilde{\x}^{j+1})\to 0$, because the lefthand term and the righthand term both converge to 0. 
As a consequence, we also have $\q_j^\top (\x^j-\tilde{\x}^{j+1})\to 0$. 

Now observe that the sequence $\x^j\in C$ is also bounded, because $\{\x\in C: f(\x)\leq f(\x^1)\}$ is bounded
and the $\x^j$ form a descent sequence for $f$. Let us say $\|\x^1-\x^j\|\leq K$ for all
$j$.  We argue that the $\p_j$ are then also bounded.
This can be shown as follows. Let ${\h}$ be a test vector with
$\|{\h}\|=1$. Then
\begin{align*}
\p_j^{\top} {\h} &\leq \phi_{k_j}(\tilde{\x}^{j+1}+{\h},\x^j) - \phi_{k_j}(\tilde{\x}^{j+1},\x^j)\\
&\leq \phi(\tilde{\x}^{j+1}+{\h},\x^j) - m_{0j}(\tilde{\x}^{j+1},\x^j)\\
&= \phi(\tilde{\x}^{j+1}+{\h},\x^j) 
-f(\x^j) - \g_{0j}^\top (\tilde{\x}^{j+1}-\x^j)\\
&\leq C +| f(\x^1) |+ \|\g_{0j}\| \| \x^j-\tilde{\x}^{j+1}\|,
\end{align*}
where $C:= \max\{\phi(u,v): \|u-x^1\|\leq MK+1, \|v-x^1\|\leq K\} < \infty$
and where $\g_{0j}\in \partial f(\x^j)$ by the definition of the exactness plane at $\x^j$.
But observe that $\partial f$ is locally bounded by \cite{RW98}, so
$\|\g_{0j}\|\leq K' < \infty$.
We deduce $\|\p_j\| \leq C + |f(x^1)| + K'(2K+M) < \infty$.
Hence the sequence $\p_j$ is bounded, and since $\g_j=\p_j+\q_j\to 0$ by the above, 
the sequence $\q_j$ is also bounded.

Therefore,  on passing to a subsequence $j\in J'$, we may assume
$\x^j\to \x^*$, $\tilde{\x}^{j+1}\to \tilde{\x}$, $\p_j\to \p$, $\q_j\to \q$. Then $\q\in N_C(\tilde{\x})$.    
Now from the subgradient inequality
\begin{align*}
\p_j^{\top} {\h} &\leq \phi_{k_j}(\tilde{\x}^{j+1}+{\h},\x^j) - \phi_{k_j}(\tilde{\x}^{j+1},\x^j)\\
&\leq \phi(\tilde{\x}^{j+1}+{\h},\x^j) - f(\x^j) + f(\x^j)-\phi_{k_j}(\tilde{\x}^{j+1},\x^j) \\
&\leq \phi(\tilde{\x}^{j+1}+{\h},\x^j) - \phi(\x^j,\x^j) + \gamma^{-1} \theta^{-1}\left( f(\x^j)-f(\x^{j+1}) \right),
\end{align*}
where we use (\ref{trial}),  $\phi_{k_j}\leq \phi$,  and acceptance $\rho_{k_j}\geq \gamma$, and where
the test vector ${\h}$ is arbitrary. Let ${\h}'$ another test vector and put ${\h}=\x^{j}-\tilde{\x}^{j+1}+{\h}'$.
Substituting this gives
\begin{equation}
\label{last}
\p_j^{\top} (\x^j-\tilde{\x}^{j+1}) + p_j^{\top} {\h}' \leq
\phi(\x^j + {\h}',\x^j) - \phi(\x^j,\x^j) + \gamma^{-1}\theta^{-1}\left( f(\x^j)-f(\x^{j+1})  \right).
\end{equation}
Now $\p_j^{\top} (\x^j-\tilde{\x}^{j+1})=
(\p_j+\q_j)^\top (\x^j-\tilde{\x}^{j+1}) + \q_j^\top (\tilde{\x}^{j+1}-\x^j) \geq (\p_j+\q_j)^\top (\x^j-\tilde{\x}^{j+1})$ using Kolmogoroff's condition
for $\q_j\in N_C(\tilde{\x}^{j+1})$. Therefore, on passing to the limit in (\ref{last}),
using $(\p_j+\q_j)^\top (\x^j-\tilde{\x}^{j+1})\to 0$, $f(\x^j)-f(\x^{j+1})\to 0$, $\p_j\to \p$ and
$\limsup_{j\in J'} \phi(\x^j+{\h}',\x^j) \leq \phi(\x^*+{\h}',\x^*)$, which follows from axiom $(M_3)$, we find
\[
\p^{\top} {\h}' \leq \phi(\x^*+{\h}',\x^*)-\phi(\x^*,\x^*).
\]
Since ${\h}'$ was arbitrary, we deduce $\p\in \partial_1 \phi(\x^*,\x^*)$, and by axiom
$(M_1)$, $\p\in \partial f(\x^*)$. 

It remains to show $\q\in N_C(\x^*)$. Now recall that $\q_j^\top (\x^j-\tilde{\x}^{j+1})\to 0$ was
shown at the beginning of part 4), so $\q^\top (\x^*-\tilde{\x})=0$. Given any
test element $\x\in C$, Kolmogoroff's inequality for $\q\in N_C(\tilde{\x})$  gives $\q^\top(\tilde{\x}-\x) \geq 0$. But then 
$\q^\top(\x^*-\x) = \q^\top (\tilde{\x}-\x) + \q^\top (\x^*-\tilde{\x}) = \q^\top (\tilde{\x}-\x)\geq 0$, so Kolmogoroff's inequality also
holds for $\q$ at $\x^*$, proving $\q\in N_C(\x^*)$.  

With $\q\in N_C(\x^*)$ and $\g=\p+\q=0$, we have shown that $\x^*$ is a critical point of (\ref{program}).
That settles the case where the trust-region radius is active and bounded away from 0. 

5)
It remains to discuss the most complicated sub-case of an infinite subsequence $j\in J$, where the
trust-region constraint is active and $R_{k_j}\to 0$. This needs two sub-sub-cases.
The first of these is a sequence $j\in J$ where in each $j$th outer loop the trust-region radius 
was reduced at least once. The second sub-sub-case are infinite subsequences 
where  the trust-region radius stayed frozen ($R_j^\sharp = R_{k_j}$) throughout the $j$th inner loop
for every $j\in J$. This is discussed in 6) below.

Let us first consider the case of an infinite sequence $j\in J$  where $R_{k_j}$ is active
at $\tilde{x}^{j+1}$,  and  $R_{k_j}\to 0$, $j\in J$, such that
during the $j$th inner loop the trust-region radius was reduced at least once.
Suppose this happened  the last time before acceptance at inner loop counter $k_j-\nu_j$.
Then for $j\in J$,
\[
R_{k_j} = R_{k_j-1} = \dots = R_{k_j-\nu_j} =\textstyle \frac{1}{2}R_{k_j-\nu_j-1}.
\]
By step 7 of the algorithm, that implies
\[
\widetilde{\rho}_{k_j-\nu_j} \geq \widetilde{\gamma}, \quad \rho_{k_j-\nu_j} < \gamma.
\]
Now $\|\x^{j+1}-\x^j\| \leq R_{k_j}$ and $\|\z^{k_j-\nu_j}-\x^j\|\leq R_{k_j-\nu_j-1} = 2R_{k_j}$, hence
$\x^{j+1}-\z^{k_j-\nu_j} \to 0$, $\x^j-\z^{k_j-\nu_j}\to 0$, $j\in J''$. From axiom $(\widetilde{M}_2)$
we deduce that there exists a sequence $\epsilon_j\to 0^+$ such that
$$
f(\z^{k_j-\nu_j}) \leq \phi(\z^{k_j-\nu_j},\x^j) + \epsilon_j \|\z^{k_j-\nu_j}-\x^j\|.
$$
By the definition of the aggregate subgradient 
$\widetilde{\g}_j \in \partial\left(\phi_{k_j-\nu_j}(\cdot,\x^j)+i_C  \right)(\y^{k_j-\nu_j})$ and Lemma \ref{lem_cauchy} we have 
$f(\x^j)-\phi_{k_j-\nu_j}(\z^{k_j-\nu_j},\x^j) \geq \sigma \|\widetilde{\g}_j\| \| \x^j-\z^{k_j-\nu_j}\|$. 

Recall that $\x^j\to \x^*$  and that we
have to show that $\x^*$ is critical. It suffices to show that there is a subsequence $j\in J'$ with $g_j\to 0$.
Assume on the contrary  that $\|\widetilde{\g}_j\|\geq \eta > 0$ for every $j\in J$. Then
\[
f(\x^j) - \phi_{k_j-\nu_j}(\z^{k_j-\nu_j},\x^j) \geq \eta\sigma \|\z^{k_j-\nu_j}-\x^j\|.
\]
Now
\[
\widetilde{\rho}_{k_j-\nu_j} = \rho_{k_j-\nu_j}+ \frac{f(\z^{k_j-\nu_j}) - \phi(\z^{k_j-\nu_j},\x^j)}{f(\x^j)-\phi_{k_j-\nu_j}(\z^{k_j-\nu_j},\x^j)}
\leq
\rho_{k_j-\nu_j}+ \frac{\epsilon_j \|\z^{k_j-\nu_j}-\x^j\|}{\eta \|\z^{k_j-\nu_j}-\x^j\|} < \widetilde{\gamma}
\]
for $j\in J$ sufficiently large, contradicting $\widetilde{\rho}_{k_j-\nu_j} \geq \widetilde{\gamma}$. This shows that
there must exist a subsequence $J'$ such that $\widetilde{\g}_j\to 0$, $j\in J'$. Passing to the limit
$j\in J'$, this shows
$0\in \partial \left( \phi(\cdot,\x^*)+i_C \right)(\x^*)$, hence $\x^*$ is critical for (\ref{program}).

6)
Now consider an infinite subsequence $j\in J$ where $\x^j\to \x^*$,  the trust-region radius $R_{k_j}$ was active
at $\tilde{x}^{j+1}$ when $\x^{j+1}$ was accepted, $R_{k_j}\to 0$,
but during the $j$th inner loop the trust-region radius was never reduced. In the classical case this can only
happen when $\x^{j+1}$ at $j$ is immediately accepted, but with bundling
this could also happen when the inner loop adds cutting planes for a time, while the test in step 7 keeps
$R_{k+1}=R_k$ in the inner loop. Since $R_{k_j}\to 0$, the work to bring the radius to
0 must be  put about somewhere else. For every $j\in J$ define $j' \in \mathbb N$ to be the largest index $j' < j$
such that in the $j'$th inner loop, the trust-region radius was reduced at least once.
Let $J' = \{j': j\in J\}$, where we understand $j\mapsto j'$ as a function. Passing to a subsequence of $J,J'$,
we may assume that $\x^{j'}\to \x'$ and $g_{j'} \to 0$, because the sequence $J'$ corresponds to one of
the cases discussed in parts 2) - 5). Passing to jet another subsequence, we may arrange that the sequences
$J,J'$ are interlaced. That is, $j' < j < j'^{+} < j^+ < j'^{++} < j^{++} < \dots \to \infty$. This is because
$j'$ tends to $\infty$ as a function of $j$.

Now assume that there exists $\eta > 0$ such that $\|\g_j\|\geq \eta$ for all $j\in J$. Then since $\x^j \to \x^*$,
we also have $\x^{j+1}\to \x^*$.
Fix $\epsilon > 0$ with $\epsilon < \eta$. For $j\in J$ large enough we have $\|\g_{j'}\| < \epsilon$, because
$\g_{j'}\to 0$, $j'\in J'$, and as $j$ gets larger, so does $j'$. That means in the interval $[j',j)$ there exists an index
$j'' \in \mathbb N$ such that
\[
\|\g_{j''}\| < \epsilon, \quad \| \g_{i}\| \geq \epsilon \mbox{ for all } i = j'' + 1,\dots,j.
\]
The index $j''$ may coincide with $j'$, it might also be larger, but it precedes $j$. In any case, $j \mapsto j''$ is again a function on $J$ and defines another infinite index set
$J''$ still interlaced with $J$.

Now recall from part 3), estimate (\ref{sum}), and  $\| \x^j-\x^{j+1}\|\leq M\|x^{j}-\tilde{\x}^{j+1}\|$, that for some constant $c>0$
\[
\sum_{i=j''+1}^{j} \|\g_i\| \|\x^i - \x^{i+1}\| \leq c \left(  f(\x^{j''+1})-f(\x^{j+1}) \right) \to 0 \qquad (j\in J, j\to\infty, j\mapsto j'').
\]
Since by construction $\|g_i\| \geq \epsilon$ for all $i \in [j''+1,\dots,j]$, and that for all $j\in J$, the sequence
$\sum_{i=j''+1}^j \|x^i-\x^{i+1}\|\to 0$ converges as $j\in J, j\to \infty$, and by the triangle inequality,
$\x^{j''+1}-\x^{j+1}\to 0$. Therefore $\x^{j''+1}\to \x^*$.  Since  $g_{j''}\in \partial (f+i_C)(\x^{j''+1})$, 
passing to yet another subsequence and using upper semi-continuity of the subdifferential,
we get $g_{j''} \to \tilde{g} \in \partial (f+i_C)(\x^*)$. Since $\|g_{j''}\| < \epsilon$, we have $\|\tilde{g}\| \leq \epsilon$.
It follows that $\partial (f+i_C)(\x^*)$ contains an element of norm $\leq \epsilon$.  
As $\epsilon < \eta$ was arbitrary, we conclude that $0\in \partial (f+i_C)(\x^*)$.
That settles the remaining case.
\end{proof}

\section{Stopping test}
A closer look at the convergence proof indicates stopping criteria for algorithm \ref{algo1}.
As is standard in bundle methods, step 2 is not executed as such but delegated to the inner loop.
When a serious step $\x^{j+1}$ is accepted, we apply the tests
\[
\frac{\|\x^j-\x^{j+1}\|}{1+\|\x^j\|} < {\rm tol}_1, \quad \frac{f(\x^j)-f(\x^{j+1})}{1+| f(\x^j)|} < {\rm tol}_2
\]
in tandem with 
\[
\frac{\min \{\|\g_j\|, \|g_{j'}\|  \}}{1+ | f(\x^j)|} < {\rm tol}_3.
\]
Here $\g_j$ is the aggregate subgradient at acceptance. In the case treated in part 6) of the proof 
we had to consider the largest index $j' < j$, where the trust-region radius was reduced for the last time.
If in the inner loop at $\x^j$ leading to $\x^{j+1}$ the trust-region radius was not reduced, we have to consider both
aggregates, otherwise $\|\g_j\|/(1+\| \x^j\|) < {\rm tol}_3$ suffices.
If the three criteria are satisfied, then we return $\x^{j+1}$ as optimal.

On the other hand, when the inner loop has difficulties finding a new serious iterate, and if a maximum number
$k_{\max}$ is exceeded, or if for $\nu_{\max}$ consecutive steps
\[
\frac{\|\x^j-\z^k\|}{1+\|\x^j\|} < {\rm tol}_1, \quad \frac{f(\x^j)-f(\z^k)}{1+| f(\x^j)|} < {\rm tol}_2
\]
in tandem with 
\[
\frac{ \|\g_k\|  }{1+ | f(\x^j)|} < {\rm tol}_3
\]
are satisfied, where $\g_k$ is the aggregate subgradient at $\y^k$, then the inner loop is stopped
and $\x^j$ is returned as optimal. In our tests we use $k_{\max}=50$, $\nu_{\max} = 5$,
${\rm tol}_1 = {\rm tol}_2 = 10^{-5}$, ${\rm tol}_3 = 10^{-6}$. Typical values in algorithm \ref{algo1} are
$\gamma =0.0001$, $\widetilde{\gamma}= 0.0002$, $\Gamma = 0.1$.

\section{Applications}
\label{sect_applications}
In this section we highlight the potential of the model-based trust-region
approach by presenting several applications.

\subsection{Full model versus working model}
\label{full}
Our convergence theory covers the specific case $\phi_k = \phi$, which we call the
{\em full model case}.
Here the algorithm simplifies, because
cutting planes are redundant, so that step 6 becomes obsolete. Moreover,
in step 7 the quotient $\widetilde{\rho}_k$ always equals $1$, so the only action taken is reduction 
of the trust-region radius.  This is now close to  the rationale of the classical trust-region method.

\subsection{Natural model}
For a composite function
$f=g\circ F$ with $g$ convex and $F$ of class $C^1$
the {\em natural model} is $\phi(\y,\x) = g\left( F(\x) + F'(\x)(\y-\x) \right)$, because it is strict and can 
be used in algorithm \ref{algo1}. 
In the full model case $\phi_k = \phi$, our algorithm reduces to the algorithm
of Ruszczy\'nski \cite[Chap. 7.5]{rud} for composite
nonsmooth functions.

\subsection{Spectral model}
\label{spectral}
An important field of applications, where the natural model often comes into action,
are eigenvalue optimization problems
\begin{eqnarray}
\label{eigen}
\begin{array}{ll}
\mbox{minimize} & \lambda_1 \left( \mathcal F(\x) \right)\\
\mbox{subject to}&\x\in C
\end{array}
\end{eqnarray}
where $\mathcal F:\mathbb R^n\to \mathbb S^m$ is a class $C^1$-mapping into the space of
$m\times m$ symmetric or Hermitian matrices $\mathbb S^m$, and $\lambda_1(\cdot)$  the maximum eigenvalue
function on $\mathbb S^m$, which is convex but nonsmooth. Here the natural model 
is $\phi(\y,\x) = \lambda_1\left( \mathcal F(\x) + \mathcal F'(\x)(\y-\x) \right)$, where $\mathcal F'$ is the differential of $\mathcal F$.
Note that nonlinear semidefinite programs
\begin{eqnarray}
\label{nlsdp}
\begin{array}{ll}
\mbox{minimize} & f(\x) \\
\mbox{subject to}& \mathcal F(\x)\preceq 0\\
&\x\in C
\end{array}
\end{eqnarray}
are special cases of (\ref{eigen}) if we use exact penalization
and write (\ref{nlsdp}) in the form
\begin{eqnarray*}
\begin{array}{ll}
\mbox{minimize} & f(\x) + c \max\left\{0,\lambda_1 \left( \mathcal F(\x) \right)\right\}\\
\mbox{subject to}&\x\in C
\end{array}
\end{eqnarray*}
with a suitable $c>0$. Namely,
this new objective may be written as the maximum eigenvalue of the mapping
$$
\mathcal F^\sharp(\x) = \left[ \begin{array}{cc} f(\x)  & 0 \\ 0 & f(\x)I_m + c\mathcal F(\x) \end{array} \right] \in \mathbb S^{1+m}.
$$

Let us apply the bundling idea to
(\ref{eigen}) using the natural model $\phi$.  Here we may build working models $\phi_k$
generated by infinite sets $\mathcal G_k$ of cuts $(a,g)$ from $\phi$, and still arrive at a computable
tangent program. Indeed, suppose
$\y^k$ is a null step  at serious iterate $\x$. According to
step 6 of algorithm \ref{algo1} we have to generate one or several cutting planes
at $\y^k$. This means we have to
compute $g_k\in \partial \lambda_1\left( \mathcal F(\x) + \mathcal F'(\x)(\cdot-\x)\right)(\y^k)$. 
Now by the generalized chain rule the subdifferential of the composite
function $\y \mapsto \lambda_1\left( \mathcal F(\x)+\mathcal F'(\x)(\y-\x)\right)$
at $\y$ is $\mathcal F'(\x)^* \partial \lambda_1 \left( \mathcal F(\x)+\mathcal F'(\x)(\y-\x) \right)$,
where $\partial \lambda_1$ is now the convex subdifferential 
of $\lambda_1$ in matrix space $\mathbb S^m$, i.e.,
\[
\partial \lambda_1(X) = \{G \in \mathbb S^m: G \succeq 0, {\rm tr}(G)=1, G \bullet X = \lambda_1(X)\}
\]
with $X\bullet Y = {\rm tr}(XY)$ the scalar product in $\mathbb S^m$. Here $\mathcal F'(\x)^*:\mathbb S^m\to \mathbb R^n$ is the
adjoint of the linear operator $\mathcal F'(\x)$. It follows that every subgradient $g$ of the composite function is of the form
\begin{equation}
\label{sub_lam}
g = \mathcal F'(\x)^* G, \quad G \in \partial\lambda_1\left( \mathcal F(\x)+\mathcal F'(\x)(\y-\x) \right).
\end{equation}
The
corresponding $a$ is $a=\lambda_1\left( \mathcal F(\x)+\mathcal F'(\x)(\y-\x)  \right)+g^\top (\x-\y)$.
As soon as the maximum eigenvalue $\lambda_1(X)$ has multiplicity
$>1$, the set $\partial\lambda_1(X)$ is not singleton, and we may therefore
add the entire subdifferential to the new set $\mathcal G_{k+1}$.

Let $\y^k$ be a null step, and let
$Q_r$ be an $m\times t_k$ matrix whose $t_k$ columns form an orthogonal basis
of the maximum eigenspace of $\mathcal F(\x) + \mathcal F'(\x)(\y^k-\x)$.  Let $Y_k$ be a $t_k\times t_k$-matrix
with $Y_k=Y_k^\top$, $Y_k\succeq 0$, tr$(Y_k)=1$, then subgradients 
(\ref{sub_lam}) are of the form $G_k = Q_kY_kQ_k^\top$. Therefore all pairs 
$(a_r,\g_r(Y_r))\in \mathcal G_k$ are of the form
\[
a_r = \lambda_1\left( \mathcal F(\x)+\mathcal F'(\x)(\y^r-\x) \right),
\quad \g_r(Y_r) = \mathcal F'(\x)^* G_r, \quad G_r=Q_rY_rQ_r^\top,
\]
indexed by $Y_r \succeq 0$, tr$(Y_r)=1$, $Y_r\in \mathbb S^{t_r}$ stemming from older null steps
$r=1,\dots,k$. The trust-region tangent program
is then
\begin{eqnarray*}
\begin{array}{ll}
\mbox{minimize} &\displaystyle \max_{r=1,\dots,k}  a_r + \lambda_1
 \left(Q_r \mathcal F'(\x) (\y-\y^r) Q_r^\top   \right) \\
\mbox{subject to}&\y\in C, \; \|\y-\x\| \leq R
\end{array}
\end{eqnarray*}
This is  a linear semidefinite program
if a polyhedral or a conical norm is used, and if $C$ is a convex semidefinite constraint set. 

We can go one step
further and consider semi-infinite maximum eigenvalue problems
as in \cite{ANP}, as this has scope for applications in automatic control. It
allows us for instance to optimize the $H_\infty$-norm, or more general IQC-constrained
programs, see \cite{IQC}. 

\subsection{Standard model}
\label{sect_standard}
The most straightforward choice of a model is
the {\em standard model}
\[
\phi^\sharp(\y,\x) = f(\x) + f^\circ(\x,\y-\x),
\]
as it gives a direct substitute for the first-order Taylor expansion of $f$ at $\x$. 
Here the full model tangent program (\ref{tangent}) has the specific form
\begin{eqnarray}
\label{cauchy}
\begin{array}{ll}
\mbox{minimize} &f(\x)+f^\circ(\x,\y-\x)\\
\mbox{subject to} & \y\in C\\&\|\y-\x\| \leq R_k
\end{array}
\end{eqnarray}
and if a polyhedral working model $\phi_k^\sharp$ is used to approximate $\phi^\sharp$
via bundling, then we get an even simpler tangent program
of the form
\begin{eqnarray}
\label{tangent_simple}
\begin{array}{ll}
\mbox{minimize} &f(\x)+ \displaystyle\max_{i=1,\dots,k} \g_i^\top (\y-\x)\\
\mbox{subject to} & \y\in C\\
&\|y-\x\| \leq R_k
\end{array}
\end{eqnarray}
where $\g_i\in \partial f(\x)$.
If a polyhedral norm is used and $C$ is a polyhedron, then (\ref{tangent_simple})
is just a linear program, which makes this line attractive computationally. 

\begin{remark}
Consider the unconstrained case $C=\mathbb R^n$ with $\phi_k^\sharp=\phi^\sharp$,  then
$\y^k= \x - R_k \g(\x)/\|\g(\x)\|$, where
$\g(\x)= \underset{g\in \partial f(\x)} {\mathrm{argmin}} ~\{ \|\g\|: g\in \partial f(\x)\}$,
and this is the nonsmooth steepest descent step of length $R_k$ at $\x$. In classical trust-region algorithms
the steepest descent step of length $R_k$ is often chosen as the first-order Cauchy step. 
\end{remark}

This raises the following
natural question. Can we use the solution of $y^k$ of (\ref{cauchy}), or  (\ref{tangent_simple}),
as a nonsmooth Cauchy point? 
Since we do not want to keep the reader on the tenterhooks too long, here is the answer:
{\em no we can't}.
Namely, 
in order to be allowed to use the standard model in Algorithm \ref{algo1}, and 
the solution of (\ref{cauchy}), (\ref{tangent_simple}) as a Cauchy point for other models,
$\phi^\sharp$ has to be strict, because this is required in Theorem \ref{theorem1}.
A sufficient condition for strictness of $\phi^\sharp$ is given in \cite{Noll2012}. 
We need the following

\begin{definition}
[Spingarn \cite{Spi81}, Rockafellar-Wets \cite{RW98}]
A locally Lipschitz function $f:\mathbb R^n \to \mathbb R$ is lower-$C^1$ at  $\x_0\in \mathbb R^n$
if there exist a compact space $\mathbb K$, a neighborhood $U$ of $\x_0$, and a mapping $F:\mathbb R^n \times \mathbb K \to \mathbb R$ such that
\begin{equation}
\label{lower}
f(\x) = \max_{\y\in\mathbb  K} F(\x,\y)
\end{equation}
for all $\x \in U$, and $F$ and $\partial F/\partial \x$ are jointly continuous. The function $f$ is said to be upper-$C^1$ at $\x_0$ if
$-f$ is lower-$C^1$ at $\x_0$.
\hfill $\square$
\end{definition}

\begin{lemma}
(See {\rm \cite{Noll2012}}).
Suppose $f$ is locally Lipschitz and upper $C^1$. Then the standard model $\phi^\sharp$ of $f$
is strict. \hfill $\square$
\end{lemma}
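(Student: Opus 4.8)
The plan is to verify the strict model axiom $(\widetilde M_2)$ directly from the upper-$C^1$ representation, exploiting the joint continuity of $-F$ and its gradient. First I would write $-f(\x) = \max_{\y\in\mathbb K} G(\x,\y)$ with $G = -F$, so that $G$ and $\partial G/\partial \x$ are jointly continuous on $U\times\mathbb K$. For a fixed base point $\x$, pick an active index $\bar\y(\x)\in\mathbb K$ attaining the max, i.e. $-f(\x) = G(\x,\bar\y(\x))$. Since $\partial G/\partial\x(\x,\bar\y(\x))$ is then a subgradient of the convex-in-nothing but pointwise-max function $-f$, one checks $-\partial G/\partial\x(\x,\bar\y(\x)) \in \partial f(\x)$, and moreover $f^\circ(\x,\dd)$ controls these; in fact the key elementary fact is that $f$ is \emph{directionally differentiable} with $f'(\x;\dd) = -\max_{\y\in A(\x)} \partial G/\partial\x(\x,\y)^\top\dd$ where $A(\x)$ is the active set, and $f^\circ(\x,\dd)=f'(\x;\dd)$ in the upper-$C^1$ (equivalently, this is the Clarke regularity of $-(-f)$, i.e. subdifferential regularity up to sign).

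The main step is the uniform one-sided Taylor estimate. Fix $\x_0$ and suppose $\y_k\to\x_0$, $\x_k\to\x_0$. I want $\epsilon_k\to 0^+$ with $f(\y_k)\le \phi^\sharp(\y_k,\x_k)+\epsilon_k\|\y_k-\x_k\| = f(\x_k)+f^\circ(\x_k,\y_k-\x_k)+\epsilon_k\|\y_k-\x_k\|$. Equivalently, for the concave function $-f$ I need a uniform-on-neighborhoods \emph{overestimate}: $-f(\y_k) \ge -f(\x_k) + (-f)^\circ... $ — more precisely the statement to prove is that $-f(\y)\ge -f(\x) - f^\circ(\x,\y-\x) - o(\|\y-\x\|)$ uniformly as $\|\y-\x\|\to 0$. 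Pick $\y^*\in A(\x_k)$ active at $\x_k$. Then by the mean value form along the segment $[\x_k,\y_k]$ applied to the smooth function $\x\mapsto G(\x,\y^*)$,
\[
-f(\y_k) \ge G(\y_k,\y^*) = G(\x_k,\y^*) + \int_0^1 \frac{\partial G}{\partial\x}\big(\x_k + t(\y_k-\x_k),\y^*\big)^\top(\y_k-\x_k)\,dt.
\]
Now $G(\x_k,\y^*) = -f(\x_k)$ by activity, and $\frac{\partial G}{\partial\x}(\x_k,\y^*)^\top(\y_k-\x_k) \ge -f^\circ(\x_k,\y_k-\x_k)$ — using that $-\frac{\partial G}{\partial\x}(\x_k,\y^*)$ is a Clarke subgradient of $f$ at $\x_k$ and the support-function characterization $f^\circ(\x_k,\dd)=\max_{\g\in\partial f(\x_k)}\g^\top\dd$. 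The integrand differs from its value at $t=0$ by a quantity bounded, via joint (hence local uniform) continuity of $\partial G/\partial\x$ on the compact set $\{(\z,\y):\|\z-\x_0\|\le\delta,\ \y\in\mathbb K\}$, by a modulus $\omega(\delta)\to 0$ times $\|\y_k-\x_k\|$. Collecting terms gives $-f(\y_k)\ge -f(\x_k) - f^\circ(\x_k,\y_k-\x_k) - \omega_k\|\y_k-\x_k\|$ with $\omega_k\to 0$, which is exactly $(\widetilde M_2)$ for $\phi^\sharp$. The convexity of $\phi^\sharp(\cdot,\x)$ and axioms $(M_1),(M_3)$ are standard properties of $f^\circ$ (positive homogeneity and subadditivity in the second argument give convexity; $(M_1)$ is $f^\circ(\x,0)=0$ together with $\partial_1\phi^\sharp(\x,\x)=\partial f(\x)$; $(M_3)$ is upper semicontinuity of $(\x,\dd)\mapsto f^\circ(\x,\dd)$).

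The hard part will be pinning down the inequality $\frac{\partial G}{\partial\x}(\x_k,\y^*)^\top\dd \ge -f^\circ(\x_k,\dd)$ cleanly and with the right uniformity, since a priori $A(\x_k)$ varies with $k$ and one must ensure the chosen active index $\y^*=\y^*_k$ stays in the compact neighborhood so that the modulus of continuity $\omega(\delta)$ applies simultaneously for all $k$; the compactness of $\mathbb K$ is what saves this. A secondary technical point is justifying $-\partial G/\partial\x(\x,\y^*)\in\partial f(\x)$ for $\y^*\in A(\x)$: this is a known consequence of the Clarke subdifferential calculus for pointwise maxima of $C^1$ functions over a compact parameter set (Clarke \cite{Cla83}), namely $\partial(-f)(\x)\subseteq \overline{\mathrm{co}}\{\partial G/\partial\x(\x,\y):\y\in A(\x)\}$ with equality in the regular case, and since $-f$ is lower-$C^1$ it is subdifferentially regular, so $f=-(-f)$ satisfies $f^\circ(\x,\dd)=-(-f)^\circ(\x,-\dd)$ and the support function of $\partial f(\x)$ in direction $\dd$ dominates $\frac{\partial G}{\partial\x}(\x,\y^*)^\top\dd$ for each active $\y^*$. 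Once these two ingredients are in place, the estimate above is a two-line integral computation, and $(\widetilde M_2)$ follows.
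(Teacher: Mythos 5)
The paper does not actually prove this lemma; it delegates it entirely to \cite{Noll2012}, so there is no in-paper argument to compare against. Your self-contained proof is correct in its essentials. The load-bearing chain is: (i) write $-f(\x)=\max_{\y\in\mathbb K}G(\x,\y)$ with $G$, $\partial G/\partial\x$ jointly continuous; (ii) for an active $\y^*\in A(\x_k)$ use $-f(\y_k)\ge G(\y_k,\y^*)$ and the fundamental theorem of calculus along $[\x_k,\y_k]$; (iii) observe $\nabla_\x G(\x_k,\y^*)\in\partial(-f)(\x_k)=-\partial f(\x_k)$ (Danskin plus subdifferential regularity of lower-$C^1$ functions), whence $\nabla_\x G(\x_k,\y^*)^\top\dd\ge-f^\circ(\x_k,\dd)$; (iv) control the integrand's deviation from its value at $t=0$ by the uniform modulus of continuity of $\nabla_\x G$ on $\bar B(\x_0,\delta)\times\mathbb K$, which is where compactness of $\mathbb K$ enters and is exactly what makes the estimate uniform in the base point, i.e.\ $(\widetilde M_2)$ rather than $(M_2)$. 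That is a complete and correct proof. Three of your side remarks contain slips, none load-bearing: the claim $f^\circ(\x,\dd)=f'(\x;\dd)$ is \emph{false} for upper-$C^1$ functions (it is $-f$, not $f$, that is Clarke-regular; e.g.\ $f(x)=-|x|$ at $0$); the identity should read $f^\circ(\x,\dd)=(-f)^\circ(\x,-\dd)$ without the leading minus; and in your last paragraph the support function of $\partial f(\x)$ dominates $-\partial G/\partial\x(\x,\y^*)^\top\dd$, not $+\partial G/\partial\x(\x,\y^*)^\top\dd$. The displayed computation uses the correct version of the inequality throughout, so these do not propagate. For comparison, the route most consonant with the rest of the paper is shorter: by Daniilidis--Georgiev \cite{dani} upper-$C^1$ functions are submonotone up to sign, and combining this with Lebourg's mean value theorem $f(\y_k)-f(\x_k)=\g_k^{*\top}(\y_k-\x_k)$, $\g_k^*\in\partial f(\xi_k)$, $\xi_k\in[\x_k,\y_k]$, gives $f(\y_k)-f(\x_k)\le \g^\top(\y_k-\x_k)+\epsilon_k\|\y_k-\x_k\|\le f^\circ(\x_k,\y_k-\x_k)+\epsilon_k\|\y_k-\x_k\|$ directly; this is the same mechanism the paper invokes in its Lemma on arbitrary exactness planes. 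Your argument buys a more elementary, self-contained derivation from the defining representation; the submonotonicity route buys brevity and reuse of a lemma already cited.
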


\begin{example}
\label{note}
The lightning function $f:\mathbb R \to \mathbb R$ 
in \cite{klatte} is an example where $\phi^\sharp$ is strict, but $f$ is not upper $C^1$. 
It is Lipschitz 
with constant $1$ and has $\partial f(x) = [-1,1]$ for every $x$. The standard model of
$f$ is strict, because for all $x,y$ there exists $\rho = \rho(x,y)\in [-1,1]$ such that
\begin{multline*}
f(y) = f(x) + \rho |y-x|  \leq f(x) + {\rm sign}(y-x)(y-x) \\
\leq f(x) + f^\circ(x,y-x)= \phi^\sharp(x,y-x),
\end{multline*}
using the fact that sign$(y-x)\in \partial f(x)$.  At the same time $f$ is certainly not upper-$C^1$, 
because it is not semi-smooth in the sense of \cite{mifflin}.
\end{example}

When using the standard model $\phi^\sharp$ in Algorithm \ref{algo1}, we 
expect the trust-region method to coincide with its classical
antecedent, or at least, to be very similar to it.
But we expect more! Let $\mathscr S$ be the class of nonsmooth locally Lipschitz functions $f$ which have a strict
standard model $\phi^\sharp$. Suppose a subclass $\mathscr S'$ of $\mathscr S$ leads to simplifications
of algorithm \ref{algo1} which reduce it to its classical alter ego. Then we have a theoretical
justification to say  that functions $f\in \mathscr S'$, even though nonsmooth,  can be optimized {\em as if they were smooth}.

Following Borwein and Moors \cite{borwein-moors},  a function $f$ is called {\em essentially 
smooth} if it is locally Lipschitz and strictly differentiable almost everywhere.
The lightning function of example \ref{note} is a pathological case, which is differentiable almost everywhere, 
but nowhere strictly differentiable.  In practice
we expect nonsmooth functions 
to be essentially smooth. This is for instance the case
for semi-smooth functions in the sense of \cite{mifflin}, for
arc-wise essentially smooth functions, or for pseudo-regular
functions in the sense of \cite{borwein-moors}. 

\begin{proposition}
\label{almost}
Let $f$ be essentially smooth. Let $\x^1\in C$ be such that 
$\{\x\in C: f(\x)\leq f(\x^1)\}$ is bounded.
Suppose the
standard model $\phi^\sharp$ is used  in algorithm {\rm \ref{algo1}}. 
Let trial points $\z^k\in C$ satisfying {\rm (\ref{trial})}  in step {\rm 4}  are drawn at random and independently according to a
continuous probability distribution on $C$.
Then with probability one the steps of the
algorithm are identical with the steps of the classical trust-region algorithm. Moreover, if $\phi^\sharp$
is strict, then every accumulation point of the sequence $\x^j$ is critical.
\end{proposition}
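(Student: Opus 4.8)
The plan is to show that, with probability one, the serious iterates $\x^j$ never hit the (Lebesgue-negligible) set of non-differentiability points of $f$, that at such points the working model coincides with the first-order Taylor polynomial, and hence that on an almost-sure event Algorithm \ref{algo1} degenerates into the classical trust-region recursion; the last assertion will then be a direct appeal to Theorem \ref{theorem1}. First I would let $N\subset\mathbb R^n$ be the set of points at which $f$ is not strictly differentiable. Essential smoothness of $f$ says precisely that $N$ is Lebesgue-null. A continuous probability distribution on $C$ charges no Lebesgue-null set, so each trial step $\z^k$ drawn in step 4 lies outside $N$ with probability one; since the algorithm draws at most countably many trial steps over all inner and outer loops, countable subadditivity yields an event $E$ of probability one on which $\z^k\notin N$ for every inner-loop instant of every outer-loop instant. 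I argue from now on on $E$. Each serious iterate $\x^{j+1}$ $(j\ge 1)$ is one of the accepted trial steps, so $\x^j\notin N$ for every $j\ge 2$; the initial iterate $\x^1$ is deterministic, but since $N$ is negligible I shall also assume $\x^1\notin N$ (otherwise the conclusion holds from the second serious iterate onwards).

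Next I would exploit strict differentiability at the serious iterates to collapse the working model. Fix $j$ with $\x^j\notin N$: then $\partial f(\x^j)=\{\nabla f(\x^j)\}$ and $f^\circ(\x^j,\dd)=\nabla f(\x^j)^\top\dd$ for all $\dd$, so the standard model is the affine Taylor polynomial $\phi^\sharp(\cdot,\x^j)=f(\x^j)+\nabla f(\x^j)^\top(\cdot-\x^j)$. As recorded in the remark following the definition of a cutting plane, for the standard model at a point of strict differentiability there is only \emph{one} cutting plane, namely this same Taylor polynomial, and it is an exactness plane. Hence, no matter which set $\mathcal G_k$ the inner loop maintains, the working model satisfies $\phi^\sharp_k(\cdot,\x^j)=f(\x^j)+\nabla f(\x^j)^\top(\cdot-\x^j)=\phi^\sharp(\cdot,\x^j)$ at every inner-loop instant $k$, and the cutting plane added in step 6 leaves it unchanged.

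I would then read off the consequences for the $j$th inner loop. The tangent program in step 4 is the classical trust-region subproblem with linear model, whose solution $\y^k$ is the Cauchy-type step (for instance $\y^k=\x^j-R_k\nabla f(\x^j)/\|\nabla f(\x^j)\|$ when $C=\mathbb R^n$); condition (\ref{trial}) becomes $-\nabla f(\x^j)^\top(\z^k-\x^j)\ge\theta\bigl(-\nabla f(\x^j)^\top(\y^k-\x^j)\bigr)$, the usual fraction-of-Cauchy-decrease requirement on $\z^k\in C\cap B(\x^j,M\|\x^j-\y^k\|)$; the quotient $\rho_k$ in step 5 is the classical ratio of achieved to predicted reduction; since $\phi^\sharp_k=\phi^\sharp$ one has $\widetilde\rho_k\equiv 1>\widetilde\gamma$, so step 7 always performs $R_{k+1}=\tfrac12 R_k$, the classical contraction on a rejected step; and step 8 performs the classical expansion $R^\sharp_{j+1}=2R_{k_j}$ on a very successful step. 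Thus on $E$ the serious-iterate sequence is exactly one produced by the classical trust-region method, with the only latitude being the freedom in choosing the step within the Cauchy-decrease ball — which the randomization simply exploits to keep the iterates generic.

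Finally, the ``moreover'' is immediate from the general theory: if $\phi^\sharp$ is a strict first-order model of $f$, then, the sublevel set $\{\x\in C:f(\x)\le f(\x^1)\}$ being bounded by hypothesis, Theorem \ref{theorem1} applies verbatim with $\phi=\phi^\sharp$, and every accumulation point of the $\x^j$ is a critical point of (\ref{program}). I expect no real obstacle here. The only delicate points are the measure-theoretic bookkeeping of the first step — making precise what a ``continuous'' distribution buys and noting that the non-random $\x^1$ has to be handled separately — and the (entirely routine) matching of the collapsed update rules against a chosen version of the classical algorithm; the conceptual crux, already isolated in the paper's remark on standard-model cutting planes, is simply that no cut taken at a point of strict differentiability can differ from the first-order Taylor polynomial, so the inner loop is left with nothing to do but shrink the trust-region radius.
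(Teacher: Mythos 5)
Your proposal is correct and follows essentially the same route as the paper: the paper's (very brief) proof likewise observes that the randomly drawn trial points, and hence all serious iterates, are almost surely points of strict differentiability, so that the standard model collapses to the first-order Taylor polynomial and the method reduces to the classical trust-region recursion, with the final claim delegated to Theorem \ref{theorem1}. Your write-up simply makes explicit the measure-theoretic bookkeeping and the collapse of steps 6--8, which the paper leaves implicit.
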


\begin{proof}
Since there exists a full neighborhood $U$ of $\y^k$ such that every $\z^k\in U \cap C$ is a valid trial point,
and since the elements in $U\cap C$ are with probability 1 points of strict differentiability, the entire sequence
$\x^j$ consists with probability 1 of points of strict differentiability.
\end{proof}

Note that we should not expect the $\y^k$ themselves to be points of  differentiability, let alone strict differentiability.
In fact the $\y^k$ will typically lie in a set of measure 0. For instance, if $C$ is a polyhedron,
then $\y^k$ is typically a vertex of $C$, or a vertex of the polyhedron of the linear program (\ref{tangent_simple}).

Proposition \ref{almost} applies
in particular when $f$ is upper $C^1$, because upper $C^1$-functions are essentially smooth.
However,
for upper $C^1$ functions we have the following
stronger result. A similar observation in the context of bundle methods was first made in \cite{dao}.

\begin{lemma}
\label{upper}
Suppose $f$ is locally Lipschitz and upper-$C^1$ and the standard model $\phi^\sharp$ is used in algorithm {\rm \ref{algo1}}.
Then we can choose the cutting plane $m_k(\cdot,\x)=f(\x)+\g_k^\top(\cdot-\x)$
in {\rm step 6} with $\g_k\in \partial f(\x)$ arbitrarily, because $f^\circ(\x,\z^k-\x) - g_k^\top(\z^k-\x) \leq \epsilon_k\|\z^k-\x\|$
holds automatically for some $\epsilon_k\to 0^+$ in the inner loop at $\x$, and
$f^\circ(\x^{j},\x^{j+1}-\x^j) - \g_j^\top(\x^{j+1}-\x^j)\leq \epsilon_j\|\x^{j+1}-\x^j\|$ holds automatically for some $\epsilon_j\to 0^+$
in the outer loop. 
\end{lemma}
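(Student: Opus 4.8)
The statement to prove is Lemma~\ref{upper}: when $f$ is upper-$C^1$ and the standard model $\phi^\sharp$ is used, any choice of $\g_k\in\partial f(\x)$ in step~6 is legitimate, because the cutting-plane error $f^\circ(\x,\z^k-\x)-\g_k^\top(\z^k-\x)$ is automatically ${\rm o}(\|\z^k-\x\|)$, both along the null-step sequence of a fixed inner loop and along the serious-step sequence of the outer loop. The key analytic input is the definition of upper-$C^1$: locally, $-f(\x)=\max_{\y\in\mathbb K}F(\x,\y)$ with $F$ and $\partial F/\partial\x$ jointly continuous on $U\times\mathbb K$. The plan is to exploit this representation to get a uniform (in the base point) first-order estimate for $f^\circ$, and then to feed in the fact that in both loops the relevant displacement $\z^k-\x$ (resp.\ $\x^{j+1}-\x^j$) tends to zero.

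\textbf{Step 1: a uniform upper bound for $f^\circ$ near $\x$.} Write $g(\x)=-f(\x)=\max_{\y\in\mathbb K}F(\x,\y)$. For locally Lipschitz functions of this form, the Clarke directional derivative satisfies $g^\circ(\x,\dd)\le\max_{\y\in M(\x)}\nabla_\x F(\x,\y)^\top\dd$, where $M(\x)$ is the (compact, upper-semicontinuous in $\x$) set of maximizers. Dually, since $f=-g$, one gets $f^\circ(\x,\dd)=(-g)^\circ(\x,\dd)=g^\circ(\x,-\dd)\cdot(-1)$ handled via $\partial f(\x)=-\partial g(\x)$, so $\partial f(\x)\subset-\,\mathrm{conv}\{\nabla_\x F(\x,\y):\y\in M(\x)\}$. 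The upshot I want is: for any $\epsilon>0$ there is $\delta>0$ such that for all $\x'$ with $\|\x'-\x\|<\delta$ and all unit $\dd$, $f^\circ(\x',\dd)\le \g^\top\dd+\epsilon$ for a suitable $\g\in\partial f(\x)$ — more precisely, that $f^\circ(\x,\z-\x)$ and $\g^\top(\z-\x)$ differ only by ${\rm o}(\|\z-\x\|)$ uniformly as $\z\to\x$. This is exactly the statement that the standard model of an upper-$C^1$ function is \emph{strict} (the Lemma of \cite{Noll2012} quoted just above), rephrased at the level of individual cutting planes: strictness says $f(\y_k)\le\phi^\sharp(\y_k,\x_k)+\epsilon_k\|\y_k-\x_k\|$, and the one-subgradient version $f^\circ(\x,\dd)-\g^\top\dd={\rm o}(\|\dd\|)$ follows by the same joint-continuity argument applied to $\nabla_\x F$.

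\textbf{Step 2: apply this inside the inner loop.} Fix the serious iterate $\x=\x^j$. In the inner loop the trial points $\z^k$ either stay bounded away from $\x$ (then the inner loop is finite and there is nothing to prove asymptotically), or, along the infinite subsequences analyzed in Lemmas~2 and~\ref{lemma2}, $\z^k\to\x$. In the latter case Step~1 gives $\epsilon_k\to0^+$ with $f^\circ(\x,\z^k-\x)-\g_k^\top(\z^k-\x)\le\epsilon_k\|\z^k-\x\|$ for \emph{any} chosen $\g_k\in\partial f(\x)$; note $f^\circ(\x,\z^k-\x)=\phi^\sharp(\z^k,\x)-f(\x)$ and $\g_k^\top(\z^k-\x)=m_k(\z^k,\x)-f(\x)$, so this is precisely $\phi^\sharp(\z^k,\x)-m_k(\z^k,\x)\le\epsilon_k\|\z^k-\x\|$, i.e.\ the freely-chosen cutting plane is as good as an exact one up to an ${\rm o}$-term. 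That is what the convergence proof of the inner loop needs.

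\textbf{Step 3: apply it along the outer loop.} Along the serious steps, part~3 of the proof of Theorem~\ref{theorem1} produces, on the relevant subsequences, $\|\x^j-\x^{j+1}\|\to0$ (this is where boundedness of the sublevel set and summability~\eqref{sum} enter). Then Step~1 with $\x=\x^j$, $\z=\x^{j+1}$, and $\g_j$ the arbitrarily chosen subgradient yields $\epsilon_j\to0^+$ with $f^\circ(\x^j,\x^{j+1}-\x^j)-\g_j^\top(\x^{j+1}-\x^j)\le\epsilon_j\|\x^{j+1}-\x^j\|$, which is the second assertion. One subtlety: upper-$C^1$ is a local property, so the constant $\delta$ and the modulus $\epsilon_k$ genuinely depend on the base point $\x$; since all iterates of the relevant subsequence converge to the single accumulation point $\x^*$, one works in one fixed neighborhood $U$ of $\x^*$ on which the $\max_{\y\in\mathbb K}F(\cdot,\y)$ representation holds, and then the modulus can be taken uniform over that neighborhood by compactness of $\mathbb K$ and joint continuity of $\partial F/\partial\x$.

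\textbf{Main obstacle.} The only real work is Step~1: turning the upper-$C^1$ representation into the \emph{uniform} (in base point, over a neighborhood) first-order estimate $f^\circ(\x',\dd)\le\g^\top\dd+{\rm o}(\|\dd\|)$ with $\g\in\partial f(\x)$. This is essentially a re-derivation, at the one-subgradient granularity, of the strictness lemma of \cite{Noll2012}; the technical heart is the joint continuity of $\nabla_\x F$ together with upper semicontinuity of the argmax set $M(\x)$, which lets one swap the base point $\x'$ for $\x$ at the cost of an $\epsilon$. Everything after that — recognizing that the displacements tend to zero along the subsequences identified in Lemmas~2, \ref{lemma2} and in parts~2)--6) of Theorem~\ref{theorem1}, and rewriting $f^\circ$ and $\g^\top(\cdot)$ as model values — is bookkeeping.
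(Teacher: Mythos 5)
Your Step~1 is where the whole proof lives, and it is precisely the step you do not carry out --- and, in the form you state it, it cannot be carried out. You reduce the lemma to the claim that for an \emph{arbitrary} $\g\in\partial f(\x)$ one has $f^\circ(\x,\z-\x)-\g^\top(\z-\x)={\rm o}(\|\z-\x\|)$ as $\z\to\x$, uniformly over directions. By positive homogeneity of $f^\circ(\x,\cdot)$, applying this along rays $\z=\x+t\dd$, $t\downarrow 0$, forces $f^\circ(\x,\dd)=\g^\top\dd$ for every $\dd$, i.e.\ $\partial f(\x)$ would have to be a singleton. For a genuinely nonsmooth upper-$C^1$ function this fails: take $f(x_1,x_2)=-|x_1|+x_2$ (upper-$C^1$, since $-f=\max\{x_1-x_2,\,-x_1-x_2\}$ is a finite max of smooth functions), $\x=0$, $\g=(0,1)\in\partial f(0)=[-1,1]\times\{1\}$, $\dd=(1,0)$: then $f^\circ(0,\dd)-\g^\top\dd=1=\|\dd\|$, not ${\rm o}(\|\dd\|)$. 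So no joint-continuity argument on $\partial F/\partial\x$ will deliver the one-point, all-directions estimate you are after; it is strictly stronger than strictness of $\phi^\sharp$ and is simply false at points of nondifferentiability.

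The paper's proof uses a different and genuinely two-point ingredient: by the theorem of Daniilidis and Georgiev, the subdifferential of an upper-$C^1$ function is submonotone, i.e.\ for every $\epsilon>0$ there is a neighborhood $U$ of $\x$ such that $(\g_1-\g_2)^\top(\x_1-\x_2)\le\epsilon\|\x_1-\x_2\|$ for all $\x_1,\x_2\in U$ and $\g_i\in\partial f(\x_i)$. This bounds the discrepancy between the arbitrarily chosen $\g_k\in\partial f(\x)$ and a subgradient attached to the displaced point $\z^k$ \emph{only in the single direction} $\z^k-\x$ that the algorithm actually exploits; pairing $\g_k$ with the maximizing (or mean-value) subgradient associated with the step to $\z^k$ and letting $\z^k\to\x$ then yields the stated inequalities. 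That directional pairing is exactly what your ``swap the base point at the cost of an $\epsilon$'' heuristic discards. Your Steps~2 and~3 --- identifying the subsequences along which $\|\z^k-\x\|$ and $\|\x^{j+1}-\x^j\|$ tend to zero --- are correct bookkeeping and match the paper, but without replacing Step~1 by the submonotonicity argument the proposal does not prove the lemma.
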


\begin{proof}
Daniilidis and Georgiev \cite[Thm. 2]{dani} prove that an upper $C^1$
function is super-monotone at $\x$  in the following sense: For every $\epsilon > 0$
there exists $\delta>0$ such that
$(\g_1-\g_2)^\top (\x_1-\x_2) \leq \epsilon \|\x_1-\x_2\|$ for all $\x_i\in U$ and
$\g_i\in \partial f(\x_i)$. Hence for sequences $\x^j,\y^j \to \x$ we find $\epsilon_j\to 0^+$
such that $(\g_j^*-\g_j)^\top (\x^j-\y^k)\leq \epsilon_j\|\y^j-\x^j\|$ for all $\g_j^*\in \partial f(\y^j)$, $\g_j\in \partial f(\x^j)$.
Choosing $\g_j^*$ such that
$f^\circ(\x^j,\y^j-\x^j)=\g_j^{*\top}(\y^j-\x^j)$ then gives the result.
\end{proof}

As a consequence we have the following

\begin{theorem}
Suppose $f$ is upper-$C^1$, $\x^1\in C$, and $\{\x\in C: f(\x) \leq f(\x^1)\}$ is bounded. Suppose
the classical trust-region algorithm is used, that is,
the only cutting plane in step {\rm 6} chosen at $\x$ is an arbitrarily  exactness plane, and  in step {\rm 7}
the trust-region radius is reduced whenever a null step occurs. Then every accumulation point
of the sequence of serious iterates $\x^j$ is a critical point of {\rm (\ref{program})}. Moreover, if  $f$ satisfies
the Kurdyka-\L ojasiewicz inequality, then the  $\x^j$ converge to a single critical point 
$\x^*$ of $f$.
\end{theorem}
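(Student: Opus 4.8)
The plan is to split the statement into two parts: (a) criticality of accumulation points, and (b) single-limit convergence under the Kurdyka--\L{}ojasiewicz (KL) inequality.

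For part (a), the key observation is that an upper-$C^1$ function is essentially smooth, so Proposition~\ref{almost} applies: with the standard model $\phi^\sharp$, the classical trust-region simplifications are exactly the degenerate instance of Algorithm~\ref{algo1} in which $\phi_k^\sharp = \phi^\sharp$ (the full model case), so that $\widetilde{\rho}_k \equiv 1$ and step~7 reduces to pure radius reduction on null steps. Thus the classical algorithm is a legitimate specialization of Algorithm~\ref{algo1}, and the conclusion of Theorem~\ref{theorem1} gives criticality of every accumulation point --- provided $\phi^\sharp$ is strict, which is guaranteed by the Lemma following the definition of lower/upper-$C^1$ (upper-$C^1 \Rightarrow \phi^\sharp$ strict). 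One should also invoke Lemma~\ref{upper} to justify that an \emph{arbitrary} exactness plane $f(\x)+\g_k^\top(\cdot-\x)$ with $\g_k\in\partial f(\x)$ suffices in step~6: the super-monotonicity estimate there supplies automatically the $\epsilon_k$- and $\epsilon_j$-inequalities that the convergence proof (parts 4) and 5) of Theorem~\ref{theorem1}, which use axioms $(M_2)$ and $(\widetilde M_2)$) would otherwise extract from strictness of the working model. So part (a) is essentially a citation of the machinery already built.

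For part (b), I would follow the standard KL descent-convergence template. First, from the acceptance test and condition~(\ref{trial}), together with estimate~(\ref{below}) / (\ref{sum}) of Theorem~\ref{theorem1}, one has a \emph{sufficient-decrease} bound of the form $f(\x^j)-f(\x^{j+1}) \geq c\,\|\g_j\|\,\|\x^j-\x^{j+1}\|$ and, since $\x^j$ is bounded with $f$ decreasing, $f(\x^j)\downarrow f^*$ for some $f^*$. Next, fix an accumulation point $\x^*$; since $f$ is continuous $f(\x^*)=f^*$, and near $\x^*$ the KL inequality gives a desingularizing function $\psi$ with $\psi'(f(\x)-f^*)\,\mathrm{dist}(0,\partial f(\x)) \geq 1$. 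One needs a \emph{relative-error} bound: $\mathrm{dist}(0,\partial(f+i_C)(\x^{j+1})) \leq b\,\|\x^j-\x^{j+1}\|$ (or a comparable bound involving the radius), which comes from writing $\g_j=\p_j+\q_j$ as in the proof of Theorem~\ref{theorem1} and controlling $\p_j$ via the subgradient inequality and local boundedness of $\partial f$. Combining sufficient decrease, relative error, and concavity of $\psi$ in the usual telescoping argument yields $\sum_j \|\x^j-\x^{j+1}\| < \infty$, hence $\{\x^j\}$ is Cauchy and converges to a single critical point $\x^*$.

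\textbf{Main obstacle.}
The delicate point in part (b) is the relative-error estimate near $\x^*$, because the trust-region mechanism does not directly produce a subgradient of $f$ at $\x^{j+1}$ of size $O(\|\x^j-\x^{j+1}\|)$: when the trust-region constraint is active the natural bound involves $\|\g_j\|R_{k_j}$ rather than $\|\g_j\|\|\x^j-\x^{j+1}\|$, and in the frozen-radius sub-sub-case of part~6) of Theorem~\ref{theorem1} the controlling subgradient lives at an earlier index $j''$ (or $j'$) rather than at $j+1$. Handling this cleanly requires either (i) restricting attention to tails where radii are bounded below (so $R_{k_j}\asymp\|\x^j-\x^{j+1}\|$ up to the constant $M$), which is the generic situation once $\x^j$ is close to a KL point and the model is exact, or (ii) bookkeeping the index shifts $j\mapsto j',j''$ and absorbing the finitely-many interlaced blocks into the telescoping sum. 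I expect most of the real work to be in this reconciliation between the trust-region radius and the true step length in the KL summation argument; the rest is routine.
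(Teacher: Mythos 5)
Your proposal follows the paper's own route for the first assertion: the paper's entire proof is that by Lemma \ref{upper} the argument of Theorem \ref{theorem1} goes through no matter which exactness plane is kept (super-monotonicity supplies the $\epsilon_k$- and $\epsilon_j$-estimates automatically), with strictness of $\phi^\sharp$ guaranteed by the lemma that upper-$C^1$ implies a strict standard model; for the Kurdyka--\L{}ojasiewicz part the paper gives no argument at all and merely cites the technique of Absil--Mahony--Andrews, Attouch et al.\ and Noll, so your sufficient-decrease/relative-error/telescoping sketch, and your honest identification of where the relative-error bound is delicate, is more explicit than what the paper provides. One correction to your part (a): the classical algorithm of this theorem is \emph{not} the full-model case $\phi_k^\sharp=\phi^\sharp$. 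A single exactness plane $f(\x)+\g^\top(\cdot-\x)$ is a strict minorant of $\phi^\sharp(\cdot,\x)=f(\x)+f^\circ(\x,\cdot-\x)$ wherever $\partial f(\x)$ is not a singleton, so $\widetilde{\rho}_k\equiv 1$ is false in general; the reduction of the radius at every null step is a \emph{hypothesis} of the theorem, not a consequence of the full-model simplification. Likewise Proposition \ref{almost} (randomly drawn trial points, probability-one statements) plays no role here. The legitimacy of keeping a single arbitrary exactness plane rests entirely on Lemma \ref{upper}, which you do invoke, so the error is harmless to the overall argument but the full-model framing should be dropped.
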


\begin{proof}
By Lemma \ref{upper} the  proof of Theorem \ref{theorem1}
applies regardless how we choose cutting planes from $\phi^\sharp$. We exploit this by choosing them
in the simplest possible way, namely we take only
one exactness plane and keep it all the time. 
If $f$ is differentiable at $\x$ then our only choice is $m(\cdot,\x)=f(\x)+\nabla f(\x)^\top (\cdot-\x)$, otherwise we
take $m(\cdot,\x)=f(\x)+\g^\top(\cdot-\x)$ with an arbitrary $\g\in \partial f(\x)$.
This makes step 6 redundant and reduces step 7 to the usual
modification of the trust-region radius.  And this is now just the classical trust-region strategy,
for which we then  have subsequence convergence
by Theorem \ref{theorem1}.

It remains to show that under the Kurdyka-\L ojasiewicz inequality the $\x^j$ converge even  to a single limit.  
This can be based on the technique of \cite{absil,attouch,Noll2012}.
\end{proof}

\begin{remark}
An axiomatic approach to trust-region methods is Dennis {\em et al.} \cite{dennis}, 
and the idea is adopted
in \cite[Chap. 11]{conn}. The difference with our approach is that $\phi$ in \cite{dennis,conn}
has to be jointly
continuous, while we use the weaker axiom $(M_3)$, and that their $f$ 
has to be regular, which precludes the use of the standard model $\phi^\sharp$, hence
makes it impossible to use the Cauchy point.
Bundling is not discussed in these approaches.

On the other hand,
the authors of
\cite{dennis}, \cite{conn} do
allow non-convex models, while in our approach $\phi(\cdot,\x)$ is convex
because we want to assure a computable tangent program,
and be able to draw cutting planes. Convexity of $\phi(\cdot,\x)$ could be relaxed to
$\phi(\cdot,\x)$ being
lower $C^1$. For that the downshift idea \cite{mifflin,Noll2010} would have to be used. 
\end{remark}

\subsection{Delamination problem}
Contact mechanics is a domain where nonsmooth optimization programs arise 
frequently. When potential energy is minimized under non-monotone friction laws,
then programs with lower-$C^1$ functions arise.  On the other hand,
quasi-static delamination problems lead to minimization of upper-$C^1$
criteria, see \cite{gwinner,raous,adly} for more information. 

\subsection{Model for splitting}
Suppose we wish to optimize a function
$f = g + h$ where $g$ is differentiable and $h$ is convex. Then
a model $\phi$ for $f$ is $\phi(\y,\x) = g(\x) + \nabla g(\x)^\top (\y-\x) + h(\y)=\phi^\sharp_g(\y,\x) + h(\y)$. 
Indeed, for the differentiable $g$ the first-order Taylor expansion is natural, and  the
convex $h$ is its own strict model. Cutting planes are now 
sums of cutting planes of the two model components.
Algorithm \ref{algo1} based on $\phi$ could then be an alternative to
a splitting technique, in particular, as ours carries over easily to the case 
when $h$ is lower-$C^2$.

\subsection{Failure of the Cauchy point}
\label{counter}
We will show by way of an example that
the classical trust-region approach based on the Cauchy point fails in the nonsmooth case.
We operate algorithm \ref{algo1} with the full standard model $\phi^\sharp$, compute the Cauchy point $\y^k$
via (\ref{cauchy}) based on the Euclidian norm, and use $\z^k=\y^k$ as the trial step.
This corresponds essentially to a classical first-order trust-region method.

The following example adapted from \cite{HuL:93}
can be used to show the difficulties with this classical scheme.
We define a convex piecewise affine function $f:\mathbb R^2 \to \mathbb R$ as
\[
f(\x) = \max\{f_0(\x),f_{\pm 1}(\x),f_{\pm 2}(\x) \}
\]
where $\x = (x_1,x_2)$ and
\[
f_0(\x)= -100, f_{\pm 1}(\x) = \pm 2x_1 + 3 x_2, f_{\pm 2}(\x)= \pm 5x_1 + 2 x_2.
\]
The plot below shows  that part of the level curve $[f=a]$ which lies in the upper
half plane $x_2 \geq 0$. It consists of the polygon connecting
the five points $(-\frac{a}{5},0)$, $(-\frac{a}{11},\frac{3a}{11})$,
$(0,\frac{a}{3})$, $(\frac{a}{11},\frac{3a}{11})$, $(\frac{a}{5},0)$.
We are interested in that part of the lower level set
$[f\leq a]$, which lies within the gray-shaded dragon-shaped area inside
the polygon $[f\leq a]$, and above the $x_1$-axis.

\centerline{
\includegraphics[scale=0.8]{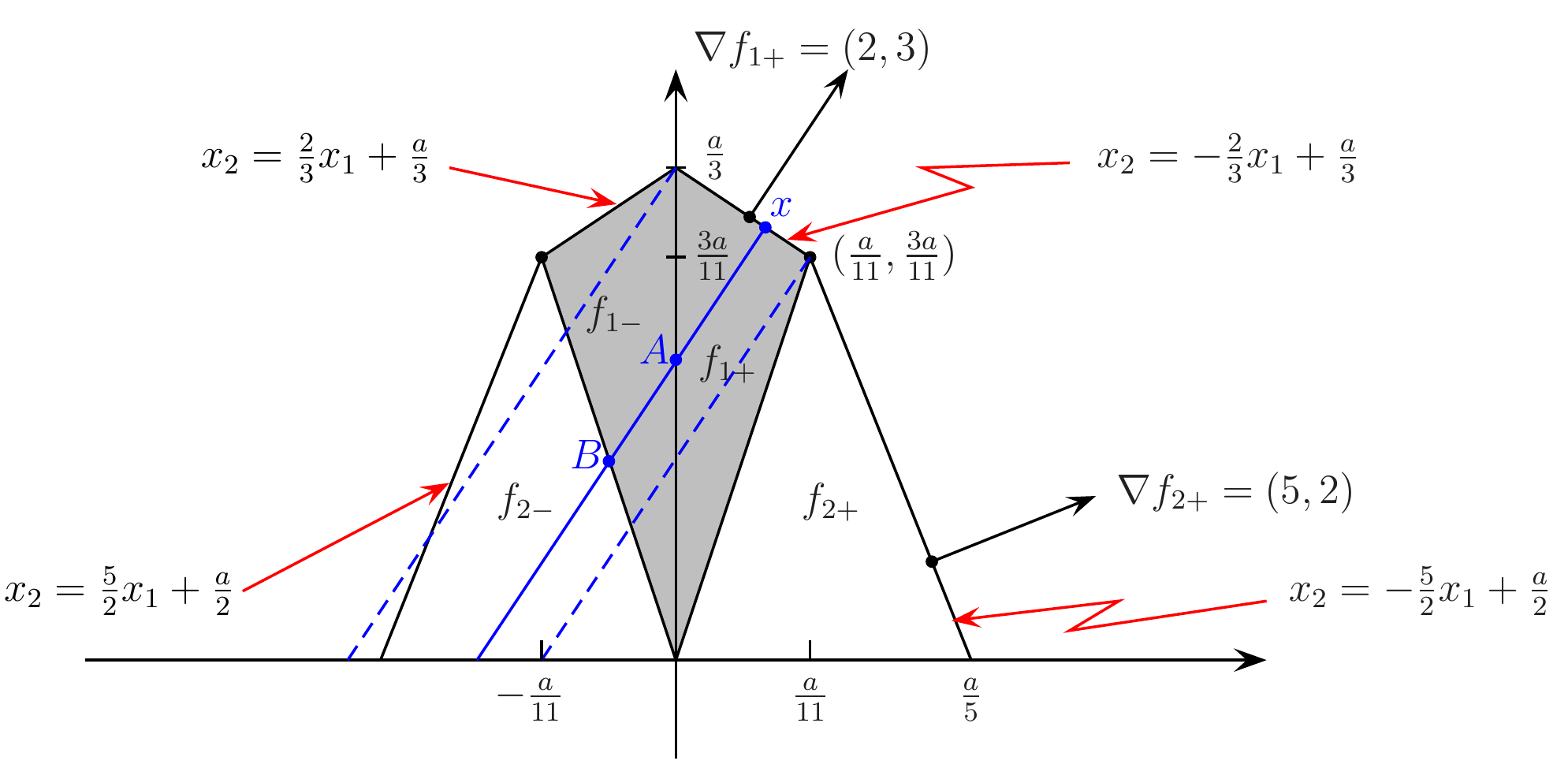}
}

Consider the exceptional
set $N = \cup_{i\not=j} \{f_i = f_j = f\}$, whose intersection with
the upper half-plane $x_2\geq 0$ consists of the three lines
$x_1=0$, $x_2=\pm 3x_1$. Then for $\x\not\in N$ the gradient
$\nabla f(\x)$ is unique.  We will generate a sequence $\x^j$ of iterates
which never meets $N$, so that $\phi^\sharp(\y,\x) = f(\x) + \nabla f(\x)^\top(\y-\x)$
with $\nabla f(\x)\in \{\pm(2,3),\pm (5,2)\}$ at all iterates $\x^j$. 
It will turn out that serious iterates
$\x^j$ never leave the dragon area, only trial points may. 

Assume that our current iterate $\x$ has $f(\x)=a$ and is situated on the right upper
part of the $a$-dragon, shown as the blue $x$ in the figure. That means
\[
\x = (x_1,-\textstyle\frac{2}{3}x_1+\frac{a}{3}), \quad f(\x)=a, \quad 0 < x_1 \leq \frac{a}{11}.
\]
Then $\phi^\sharp(\y,\x) = f_{1+}(\y) = 2y_1+3y_2$. If the current trust-region radius
is $R = \sqrt{13}r$, then the solution of (\ref{tangent}) is $\y=\x + r(-2,-3)=(x_1-2r,-\frac{2}{3}x_1+\frac{a}{3}-3r)$.
If we follow the point $y$ as a function of $r$ along the steepest descent
line shown in blue, we will reach the points $A,B$ in increasing order at
$0<r_A < r_B$. Here 
$A$ is the intersection of the steepest descent line with the $x_2$ axis,
reached at $r_A=x_1/2$. The point  $B$ is when the ray meets the boundary of the $a$-dragon, 
which is  the line $x_2=-3x_1$ on the left,   reached at
$$
r_B= \textstyle\frac{7}{27}x_1+\frac{a}{27}.
$$ 
We have $f(A)=f_{1+}(A)=a-\frac{17}{4}x_1$ and $f(B)=f_{1-}(B)=-\frac{143}{27}x_1+\frac{22}{2}a$,
and from here on $f$ increases along the ray.
The test quotient $\rho$ for trial points $\y$ of this form behaves as follows
\[
\rho = \frac{f(\x_a)-f(\y)}{f(\x_a)-\phi^\sharp(\y,\x_a)}
=
\left\{
\begin{array}{ll}
1 & \mbox{ if } 0 < r \leq r_A \\
\frac{4x_1+5r}{13r} & \mbox{ if } r_A \leq r \leq r_B\\
\frac{a-12r+19x_1}{39r} & \mbox{ if } r_B \leq r <\infty
\end{array}
\right.
\]
The quotient is therefore constant on $[0,r_A]$, and decreasing on $[r_A,\infty)$.
If we trace the quotient at the point
$B$ as a function of $x_1$, we see that
$\rho = \frac{5}{13}$ at $x_1=0$, and $\rho = \frac{198}{234}$  at $x_1=\frac{a}{11}$. That means if we take the Armijo
constant as $\gamma \in (\frac{198}{234},1)$, then none of the points in $[B,\infty)$ is accepted, whatever
$x_1 \in (0,\frac{a}{11}]$. 
Let  the value $r$ where the quotient $\rho$ equals $\gamma$
be called $r_\gamma$. Then $r_A < r_\gamma < r_B$, and we have
$
r_\gamma = \frac{4x_1}{13\gamma -5}.
$

Let us for simplicity put $\Gamma = 1$. That means good steps where the trust-region
radius is doubled  are exactly those in $(x,A]$, that is,  $0 < r \leq r_A$. 
Such a step is immediately accepted, 
and we stay on the right upper half of the $a^+$-dragon, where $a^+ < a$, except for the point $A$, which we will exclude later.
We find for $0 < r < r_A=x_1/2$:
\[
a^+=a-13r >0, \quad x^+ =\textstyle (x_1-2r, -\frac{2}{3}x_1+\frac{a}{3}-3r) = (x_1^+,-\frac{2}{3}x_1^++\frac{a^+}{3}).
\]
Note that $a=a^+$ for the limiting case $x_1=0$, and $a^+=\frac{9}{22}a$ for the limiting case $x_1=\frac{a}{11}$.
According to step 8 of the algorithm the trust-region radius is doubled $(R^+ =2 R)$
for $0 < r < r_A$, because $\rho = 1 \geq \Gamma = 1$.

The second case is when from the current $x$ with $f(x)=a$ a step with $R=\sqrt{13}r$ and $r\in (r_A,r_\gamma)$
is taken. Then we end up on the left hand side of the dragon with the new situation
\[
x^+=\textstyle (x_1-2r,-\frac{2}{3}x_1+\frac{a}{3}-3r), \quad f(x^+)=f_{1-}(x^+)
=-4x_1+a-5r = a^+.
\] 
By symmetry, this case
is analogous to the initial situation, the model at $\x^+$ now being $f_{1-}$. We are now on the upper left side of the smaller $a^+$-dragon.
Since $\gamma \leq \rho < \Gamma$, the trust-region
radius remains unchanged.

The third case is when $r \in [r_\gamma,\infty)$. Here the step is rejected,
and the trust-region radius is halved, until a value $r<r_\gamma$ is reached. 

Since
$\phi^\sharp$ is used, no cutting planes are taken, and we follow the classical trust-region method.
In consequence, the serious iterates $x,x^+,x^{++},\dots$ stay in the dragons $a,a^+,a^{++},\dots$ and converge to the origin, which is not a 
critical point of $f$. Note that we have to assure that none of the trial points $\y$ lies precisely on the
$x_2$-axis. Now it is clear that for a given starting point
$x$ the method has a countable number of possible trial steps $\y^k$, and we can choose the initial
$x_1\in (0,\frac{a}{11}]$ such that the $x_2$-axis is avoided, for instance, by taking an irrational initial value. Alternatively, in the case where
$\y^k$ hits the $x_2$-axis, we might use rule (\ref{trial}) to change it slightly
to a $\z^k$, which is not on the axis. In both cases the method will never leave the dragon area,
hence convergence based on the Cauchy point fails.

\section{Parametric robustness}
\label{sect_experiments} 
We consider an  LFT plant  \cite{zhou}  with real parametric uncertainties $\mathcal F_u(P, \Delta)$,  where 
\begin{eqnarray}\label{plant}
P(s):\left\{
\begin{matrix}
\dot{x}& =& Ax &+& B_pp   &+&B_w w&  \\
q& =& C_qx&+&D_{qp}p&+&D_{qw}w& \\
z& =& C_zx& +&D_{zp}p & +&D_{zw}w & 
\end{matrix}
\right.
\end{eqnarray}
and $x\in \mathbb R^{n_x}$ is the state, 
$w \in \mathbb R^{m_1}$ the vector of exogenous inputs, 
and $z\in \mathbb R^{p_1}$ the regulated output.  
The uncertainty channel is defined as $p = \Delta q$, 
where the uncertain matrix
$\Delta$  is without loss assumed to have the block-diagonal  form
\begin{equation}
\label{matrix}
\Delta = {\rm diag}\left[ \delta_1 I_{r_1},\dots, \delta_m I_{r_m}\right]
\end{equation}
with $\delta_1,\dots,\delta_m$ representing real uncertain
parameters, and $r_i$ giving the number of repetitions of $\delta_i$. 
We write $\del=(\delta_1,\dots,\delta_m)$
and assume without loss that $\del = 0$  represents the nominal parameter value.
Moreover, we consider $\del\in \mathbb R^m$ in one-to-one correspondence with
the matrix $\Delta$ in (\ref{matrix}).

\subsection{Worst case $H_\infty$-performance over a parameter set}
\label{sec-h}
Our first problem 
concerns analysis of the performance of a system
(\ref{plant}) subject to parametric uncertainty.  In order to analyze the robustness of (\ref{plant})
we  compute the worst-case $H_\infty$ performance of the channel $w\to z$ 
over a given uncertain  parameter range normalized to
${\bf \Delta}=[-1,1]^m$.  In other words, we compute
\begin{equation}
\label{h}
h^*= \max\{ \|T_{wz}(\del)\|_\infty: \del\in {\bf \Delta}\},
\end{equation}
where $T_{wz}(\delta)$ is the transfer function $z(s) = \mathcal F_u(P(s),\Delta) w(s)$, or more explicitly,
\[
z(s)=\left[ P_{22}(s) + P_{21}(s) \Delta (I - P_{11}(s) \Delta)^{-1} P_{12}(s) \right] w(s) .
\]
The significance of (\ref{h}) is
that computing a critical parameter value $\del^*\in {\bf \Delta}$ which degrades the $H_\infty$-performance
of (\ref{plant}) may be an important domino in assessing the properties of a controlled system (\ref{plant}). We refer to
\cite{and} where this is exploited in parametric robust synthesis.

Solving (\ref{h})
leads to a program of the form (\ref{program}) if we write (\ref{h})  as minimization of
$h_-(\del)=-\|T_{wz}(\del)\|_\infty$ over the convex ${\bf \Delta}$.  The specific form of
${\bf \Delta}$ strongly suggest the use of the maximum norm
$|\del|_\infty=\max\{ |\delta_1|,\dots,|\delta_m| \}$ to define trust-regions.
Moreover, we will use the standard model $\phi^\sharp$
of $h_-(\del)=-\|T_{wz}(\del)\|_\infty$, as is
justified by the following

\begin{lemma}
Let $D=\{\del: T_{zw}(\del) \text{ \rm  is internally stable}   \}$. Then   $h_-: \del \mapsto- \|T_{zw}(\del)\|_\infty$
is upper-$C^1$ on $D$.
\end{lemma}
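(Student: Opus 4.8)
The plan is to show that $h_-(\del)=-\|T_{zw}(\del)\|_\infty$ can be written locally as $-\max$ of a nicely parametrized family, i.e. as an upper-$C^1$ function in the sense of the Spingarn/Rockafellar--Wets definition. Concretely, I would use the frequency-domain characterization
\[
\|T_{zw}(\del)\|_\infty = \max_{\omega\in[0,\infty]} \sigma_{\max}\!\left(T_{zw}(\del,j\omega)\right) = \max_{\omega\in[0,\infty],\,\|u\|=\|v\|=1} \mathrm{Re}\,\langle u, T_{zw}(\del,j\omega) v\rangle,
\]
so that $h_-(\del) = -\max_{\y\in\mathbb K} F(\del,\y)$, where the compact index set is $\mathbb K = [0,\infty]\times S_{\mathbb C}^{p_1}\times S_{\mathbb C}^{m_1}$ (the extended frequency axis compactified at $\infty$, times the product of complex unit spheres) and $F(\del,(\omega,u,v)) = \mathrm{Re}\,\langle u, T_{zw}(\del,j\omega)v\rangle$. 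This exhibits $-h_-$ as a $\max$ over a compact set, which is the required structural form; what remains is to verify that $F$ and $\partial F/\partial\del$ are jointly continuous on $U\times\mathbb K$ for a neighborhood $U\subset D$ of any fixed $\del_0$.

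**The key steps**, in order: first, fix $\del_0\in D$ and choose a neighborhood $U\subset D$ on which the state matrix of the closed-loop realization $\mathcal F_u(P,\Delta)$ stays Hurwitz; here I would use that $D$ is open (stability is an open condition on the spectrum) and that $A + B_p\Delta(I-D_{qp}\Delta)^{-1}C_q$ depends smoothly, indeed rationally, on $\del$, with $(I-D_{qp}\Delta)$ invertible near $\del_0$. Second, on $U$ the transfer function $T_{zw}(\del,j\omega)$ and hence $F(\del,(\omega,u,v))$ is a rational, pole-free (on the imaginary axis, including $\omega=\infty$ via the $D$-matrix) function of $(\del,\omega)$, jointly continuous in all arguments including the sphere variables $u,v$; continuity at $\omega=\infty$ is where the one-point compactification and the feedthrough term matter. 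Third, differentiate $F$ in $\del$: since $T_{zw}$ is a composition of matrix inversion and multiplication applied to $C^1$ (in fact rational) data, $\partial F/\partial\del$ is again a rational expression in $(\del,\omega)$ with no imaginary-axis poles on $U$, continuous in $(u,v)$ as well, hence jointly continuous on $U\times\mathbb K$. Fourth, invoke the definition: $h_-(\del) = -\max_{\y\in\mathbb K}F(\del,\y)$ with $F,\partial_\del F$ jointly continuous on $U\times\mathbb K$ means precisely that $-h_-$ is lower-$C^1$ at $\del_0$, i.e. $h_-$ is upper-$C^1$ at $\del_0$; since $\del_0\in D$ was arbitrary, $h_-$ is upper-$C^1$ on $D$.

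**The main obstacle** I expect is the treatment of the frequency at infinity and the compactness of the index set: one must genuinely compactify $[0,\infty)$ to $[0,\infty]$ and check that $F$ extends continuously, with $T_{zw}(\del,j\infty) = D_{zw} + D_{zp}\Delta(I-D_{qp}\Delta)^{-1}D_{qw}$, and that this extension is still $C^1$ in $\del$ jointly with $\omega$ near $\infty$ — otherwise the supremum over $\omega\in[0,\infty)$ need not be attained and the $\max$-representation fails. A secondary technical point is that the singular-value/numerical-radius rewriting introduces the sphere variables; these lie in compact sets so the product $\mathbb K$ is compact, but one should note $F$ is real-analytic (in particular $C^1$) in $\del$ uniformly as $(\omega,u,v)$ ranges over $\mathbb K$, which is immediate once the pole-freeness on the closed extended imaginary axis is established on $U$. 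The algebra of actually writing $T_{zw}$ via the LFT formula and differentiating the resolvent $(I-P_{11}\Delta)^{-1}$ is routine and I would not carry it out in detail.
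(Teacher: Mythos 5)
Your proposal is correct and takes essentially the same route as the paper: both exhibit $h_+=-h_-$ as a supremum of a jointly $C^1$ family indexed by the compactified frequency axis times compact unit spheres, which is exactly the lower-$C^1$ representation (\ref{lower}) required by the Spingarn/Rockafellar--Wets definition. The only (harmless) difference is that you absorb the modulus $|u^\ast T_{zw}v|$ into $\mathrm{Re}\,(u^\ast T_{zw}v)$ via the phase freedom of the complex sphere variables, whereas the paper keeps the modulus and instead writes $|z|=\sup_{l\in\mathbb L}\Psi(z,l)$ with $\Psi$ jointly $C^1$, adding one extra compact factor $\mathbb L$ to the index set.
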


\begin{proof}
It suffices to prove that $h_+:\del \mapsto \|T_{wz}(\del)\|_\infty$ is lower $C^1$. To prove this,
recall that the maximum singular value has the variational
representation
\[
\overline{\sigma}(G) = \sup_{\|u\|=1} \sup_{ \|v\|=1} \left| u^T G v\right|.
\]
Now observe that $z\mapsto |z|$, being convex,  is lower-$C^1$ as a mapping $\mathbb R^2 \to \mathbb R$, so we may write
it as
\[
|z| = \sup_{l\in \mathbb L} \Psi(z,l)
\]
for $\Psi$ jointly of class $C^1$ and a suitable compact set $\mathbb L$. Then
\begin{equation}
\label{4sup}
h_+(\delta) = \sup_{j\omega \in \mathbb S^1} \sup_{\|u\|=1} \sup_{ \|v\|=1} \sup_{l\in\mathbb L}
\Psi\left( u^TT_{zw}(\delta,j\omega)v, l  \right),
\end{equation}
where $\mathbb S^1 = \{j\omega: \omega\in \mathbb R \cup\{\infty\}\}$ is homeomorphic with the $1$-sphere.
This is  a representation of the form (\ref{lower}) for $h_+$, where the compact space 
is  $\mathbb K:=\mathbb S^1 \times \{u: \|u\|=1\}\times \{v: \|v\|=1\} \times\mathbb L$, $F$ is 
$F(\delta,j\omega,u,v,l):= \Psi\left( u^TT_{zw}(\delta,j\omega)v, l  \right)$ and $y= (j\omega,u,v,l)$.  
\end{proof}

\begin{theorem}[Worst-case $H_\infty$ norm on $\bf \Delta$]
\label{theorem_h}  
Let $\del^j \in {\bf \Delta}$ be the sequence generated by the standard trust-region algorithm
applied to  program {\rm (\ref{h})} based on the standard model of $h_-$. 
Then the  $\del^j$ converge to a 
critical point $\del^*$ of {\rm (\ref{h})}.
\end{theorem}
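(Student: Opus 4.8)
The plan is to reduce the statement to the upper-$C^1$ trust-region theorem proved above; the only genuinely new point is the verification of the Kurdyka--\L ojasiewicz inequality for the worst-case $H_\infty$ map. Throughout I would use the standing assumption, implicit in the statement, that the plant is robustly stable over $\bf \Delta$, i.e. ${\bf \Delta}\subseteq D$, so that the preceding Lemma makes $h_-$ upper-$C^1$ — hence locally Lipschitz and finite — on a neighbourhood of $\bf \Delta$. By the Lemma of \cite{Noll2012} the standard model $\phi^\sharp$ of $h_-$ is then strict. Since ${\bf \Delta}=[-1,1]^m$ is compact, the sublevel set $\{\del\in{\bf \Delta}: h_-(\del)\le h_-(\del^1)\}$ is automatically bounded, and compactness of $\bf \Delta$ also guarantees that the sequence $\del^j$ has accumulation points. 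All hypotheses of the upper-$C^1$ convergence theorem are thus met with $f=h_-$, $C={\bf \Delta}$, and since by assumption the \emph{classical} trust-region algorithm is run (only the single exactness plane $m(\cdot,\del)=h_-(\del)+\g^\top(\cdot-\del)$ kept at each serious step, trust-region radius halved at every null step), that theorem gives that every accumulation point of the $\del^j$ is a critical point of (\ref{h}).

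It remains to pass from subsequential to full convergence, which by the same theorem follows once $h_-$ is known to satisfy the Kurdyka--\L ojasiewicz inequality. Here the argument I would give is that $h_+:\del\mapsto\|T_{wz}(\del)\|_\infty$ is a semialgebraic function of $\del$. Standard LFT algebra produces a state-space realization of $T_{wz}(\del)$ whose matrices are rational in $\del$ on the set where $I-D_{qp}\Delta$ is invertible (in particular on $\bf \Delta$). For a stable such realization and any $\gamma$ exceeding the norm of its feedthrough term, the bounded-real characterization says that $\|T_{wz}(\del)\|_\infty<\gamma$ if and only if the associated Hamiltonian matrix $\mathcal H_\gamma(\del)$ has no eigenvalue on the imaginary axis, that is, there is no $\omega\in\mathbb R$ with $\det\bigl(\mathcal H_\gamma(\del)-j\omega I\bigr)=0$. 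After clearing denominators this is a polynomial condition in $(\del,\gamma,\omega)$, and eliminating the quantified variable $\omega$ by Tarski--Seidenberg leaves a semialgebraic description of $\{(\del,\gamma):\gamma\ge\|T_{wz}(\del)\|_\infty\}$ (alternatively one eliminates the matrix variable of the bounded-real LMI). Hence the graph of $h_+$, and with it $h_-=-h_+$, is semialgebraic; being also continuous, $h_-$ obeys the \L ojasiewicz, hence Kurdyka--\L ojasiewicz, inequality in the nonsmooth form used in \cite{attouch,Noll2012}. Invoking the single-limit part of the upper-$C^1$ theorem then gives $\del^j\to\del^*$ with $\del^*$ critical for (\ref{h}).

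The main obstacle is this semialgebraicity step: one must make sure that a characterization of the $H_\infty$ norm (Hamiltonian eigenvalue test or bounded-real LMI) is available with $\del$-rational data, that it remains semialgebraic after quantifier elimination, and that the exceptional set $\{\det(I-D_{qp}\Delta)=0\}$ on which the realization degenerates does not intersect $\bf \Delta$ — all of which is secured by the robust-stability assumption ${\bf \Delta}\subseteq D$. Everything else (strictness of $\phi^\sharp$, boundedness of sublevel sets, existence of accumulation points, and subsequential criticality) is immediate from compactness of $\bf \Delta$ together with the results already established.
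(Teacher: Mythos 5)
Your proposal follows essentially the same route as the paper: reduce to the upper-$C^1$ convergence theorem via the lemma that $h_-$ is upper-$C^1$ on $D$ (so the standard model is strict and the algorithm coincides with the classical trust-region method), then upgrade subsequential convergence to convergence to a single limit via the Kurdyka--\L ojasiewicz inequality. The only difference is that you spell out the semialgebraicity/Tarski--Seidenberg argument for the \L ojasiewicz property and the implicit hypothesis ${\bf \Delta}\subseteq D$, both of which the paper simply asserts or leaves tacit; these details are correct and consistent with the paper's intent.
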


\begin{proof}
By Lemma \ref{upper} 
Algorithm \ref{algo1}  coincides with a classical first-order trust-region algorithm, with
convergence in the sense of subsequences. 
Convergence to a single critical point
then follows by observing that $h_-$ satisfies a \L ojasiewicz inequality.
\end{proof}

\subsection{Robust stability over a parameter set}
\label{sec-alpha}
In our
second problem  we wish to check whether the uncertain system (\ref{plant}) is robustly stable over the uncertain
parameter set ${\bf\Delta}=[-1,1]^m$. This can be tested
by maximizing the spectral abscissa
over ${\bf \Delta}$:
\begin{equation}
\label{alpha}
\alpha^* = \max\{ \alpha\left( A(\del)\right): \del\in {\bf \Delta}\},
\end{equation}
where $A(\del)$ is the closed-loop system matrix
\begin{equation}
\label{AofDelta}
A(\del) = A + B_p\Delta \left( I-D_{qp}\Delta \right)^{-1}C_q, 
\end{equation}
and where the spectral abscissa of $A\in \mathbb R^{n\times n}$ is 
$\alpha(A)=\max\{{\rm Re}(\lambda): \lambda \mbox{ eigenvalue of }A\}$. 
The decision is now as follows. 
As soon as $\alpha^*\geq 0$, the solution 
$\del^*$ of (\ref{alpha}) represents a destabilizing choice of the parameters,
and this may be valuable information in practice, see  \cite{and}. On the other hand, if the 
global maximum has value $\alpha^* < 0$, then a certificate for robust stability
over $\delta\in {\bf \Delta}$ is obtained. 

Global maximization of (\ref{alpha}) is known to be NP-hard \cite{poljak,braatz}, 
so it is interesting to use a local optimization method to compute good lower bounds.
This can be achieved by algorithm \ref{algo1}, because
(\ref{alpha}) is clearly of the form (\ref{program})
if maximization of $\alpha$ is replaced by minimization of $-\alpha$ over ${\bf \Delta}$.
In our experiment  additional speed is gained by adapting the trust-region norm $|\del|_\infty
=\max\{|\delta_1|,\dots,|\delta_m|\}$ to the special form ${\bf \Delta}=[-1,1]^m$
of the set $C$, and  the standard model  $\phi^\sharp$
of $a_-(\del)=-\alpha(A(\del))$ is used.
With these arrangements the method converges fast and reliably to a local optimum,
which in the majority of cases can be certified {\em a posteriori} as a global one. 

In order to justify the use of the standard model in Algorithm \ref{algo1} we have to show
that $a_-$ is upper-$C^1$, or  at least that its standard model is strict. Here the situation is  more delicate
than in section \ref{sec-h}. We start by observing the following

\begin{lemma}
\label{prop4}
Suppose all active eigenvalues  of $A(\del)$ at $\del$ are  semi-simple. Then $a_-(\del)= -\alpha\left( A(\del)\right)$
is Clarke subdifferentiable in a neighborhood of $\del$. 
\end{lemma}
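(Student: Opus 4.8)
The plan is to deduce the statement from the chain rule for locally Lipschitz maps: write $a_- = (-\alpha)\circ A(\cdot)$, check that the inner map $\del'\mapsto A(\del')$ is real-analytic (hence $C^1$ and locally Lipschitz) in a neighbourhood of $\del$, and that the spectral abscissa $\alpha$ is locally Lipschitz on a neighbourhood of the matrix $A(\del)$ once all active eigenvalues of $A(\del)$ are semisimple. Since a composition of locally Lipschitz maps is locally Lipschitz, this makes $a_-$ locally Lipschitz on a neighbourhood of $\del$, and consequently the Clarke subdifferential $\partial a_-$ is nonempty, compact and convex at every point of that neighbourhood.

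First I would record that $\del'\mapsto A(\del') = A + B_p\Delta(I-D_{qp}\Delta)^{-1}C_q$ is real-analytic near $\del$: since $A(\del)$ is assumed to exist, $I-D_{qp}\Delta(\del)$ is invertible, invertibility is an open condition, and on the corresponding open set the matrix inverse is a rational, hence real-analytic, function of the entries of $\del$. Next I would analyse $\alpha$ near $A(\del)$ by the Riesz spectral-projector device. Put $\bar\alpha=\alpha(A(\del))$ and let $\lambda_1,\dots,\lambda_p$ be the distinct active eigenvalues of $A(\del)$, i.e.\ those with $\mathrm{Re}\,\lambda_\ell=\bar\alpha$; the remaining eigenvalues have real part $\le\bar\alpha-2\epsilon$ for some $\epsilon>0$. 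Pick pairwise disjoint small circles $\Gamma_\ell$ about the $\lambda_\ell$ enclosing no other eigenvalue of $A(\del)$, and let $\Pi_\ell(X)$ be the associated spectral projectors; these depend analytically on $X$ near $A(\del)$ and have constant rank $m_\ell$ equal to the algebraic multiplicity of $\lambda_\ell$. Expressing $X$ restricted to $\mathrm{range}\,\Pi_\ell(X)$ in an analytically varying basis yields an $m_\ell\times m_\ell$ analytic matrix $M_\ell(X)$ whose spectrum is exactly the part of $\mathrm{spec}(X)$ inside $\Gamma_\ell$. Semisimplicity of $\lambda_\ell$ for $A(\del)$ is precisely what gives $M_\ell(A(\del))=\lambda_\ell I_{m_\ell}$. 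For $\del'$ close enough to $\del$, every eigenvalue of $A(\del')$ with real part exceeding $\bar\alpha-\epsilon$ lies in $\bigcup_\ell\Gamma_\ell$, whence
\[
\alpha\big(A(\del')\big)=\max_{1\le\ell\le p}\alpha\big(M_\ell(A(\del'))\big)=\max_{1\le\ell\le p}\Big(\mathrm{Re}\,\lambda_\ell+\alpha\big(M_\ell(A(\del'))-\lambda_\ell I_{m_\ell}\big)\Big),
\]
and each $\del'\mapsto M_\ell(A(\del'))-\lambda_\ell I_{m_\ell}$ is $C^1$ and vanishes at $\del$. Thus everything reduces to the local Lipschitz behaviour, near the scalar matrix $\lambda_\ell I_{m_\ell}$, of the $m_\ell\times m_\ell$ spectral abscissa, after which the $C^1$ composition and the finite maximum are harmless.

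The main obstacle is exactly this last reduction. When $m_\ell=1$ the cluster is a single simple eigenvalue, $M_\ell(A(\cdot))$ is a scalar analytic function, and its contribution to the maximum is $C^1$; nothing delicate happens here. When $m_\ell>1$ one has to bound $\alpha$ of an $m_\ell\times m_\ell$ matrix that merely lies \emph{near} --- not equal to --- a scalar matrix, and this is where, unlike in Section~\ref{sec-h}, one cannot simply exhibit an upper-$C^1$ representation of $a_-$; the estimate must be drawn from the fine perturbation structure of the cluster attached to each semisimple active eigenvalue, which is precisely why mere continuity of the eigenvalues would not suffice and why semisimplicity is the hypothesis. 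I would therefore isolate the $m_\ell\times m_\ell$ cluster estimate as the technical core of the proof and treat the reduction above, together with the chain rule, as bookkeeping.
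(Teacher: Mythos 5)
The paper does not actually prove this lemma: its entire ``proof'' is a pointer to \cite{BO94}, with \cite{Lui11} cited for the fact that semi-simple eigenvalues depend Lipschitz-continuously on the matrix. So the comparison here is between your sketch and those cited facts. Your reduction is the right one, and is essentially how the cited results are obtained: analyticity of $\del'\mapsto A(\del')$ where the LFT is well posed, Riesz projectors for the clusters attached to the active eigenvalues, and the observation that semisimplicity forces each cluster matrix $M_\ell(A(\del))$ to equal $\lambda_\ell I_{m_\ell}$, so that the eigenvalues of the perturbed cluster move by at most the norm of the perturbation (spectral radius $\leq$ norm) rather than by a fractional power of it, as they would for a nontrivial Jordan block. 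Identifying that as the exact place where semisimplicity enters is correct and already more informative than what the paper writes.

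The gap is that you stop precisely there and declare the $m_\ell\times m_\ell$ cluster estimate ``the technical core'' to be isolated, without supplying it --- and the estimate one would naturally supply does not prove what the lemma asserts. Applying $\rho(E)\leq\|E\|$ to $M_\ell(A(\del'))=\lambda_\ell I_{m_\ell}+E_\ell(\del')$ yields $|\alpha(A(\del'))-\alpha(A(\del))|\leq L\,|\del'-\del|$, i.e.\ calmness of $\alpha\circ A$ \emph{at the single point} $\del$. It does not yield a Lipschitz estimate between two nearby points $\del_1,\del_2$ both different from $\del$, because semisimplicity is not stable under perturbation: at $\del_1$ the cluster matrix need no longer be scalar, so the device cannot be re-based there. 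And the stronger conclusion can genuinely fail: for $X(t,s)=\begin{pmatrix}0&t\\ s&0\end{pmatrix}$ the eigenvalue $0$ of $X(0,0)$ is semisimple, yet $\alpha(X(t,s))=\sqrt{\max(ts,0)}$ is not Lipschitz on any neighborhood of the origin (compare the points $(t,t^3)$ and $(t,0)$ as $t\to 0^+$); this $X$ is even of the form (\ref{AofDelta}) with $A=0$, $D_{qp}=0$, $\Delta={\rm diag}(\delta_1,\delta_2)$. So if ``Clarke subdifferentiable in a neighborhood of $\del$'' is to mean locally Lipschitz on a neighborhood, the reduction as you have set it up cannot deliver it --- a fragility the paper itself concedes immediately after the lemma, where it notes that $a_\pm$ may fail to be locally Lipschitz. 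At minimum you must (i) actually prove the cluster estimate rather than announce it, and (ii) state precisely which property it delivers: as it stands your argument establishes calmness of $a_-$ at $\del$, hence nonemptiness of the Clarke subdifferential there, but not the Lipschitz property on a full neighborhood.
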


\begin{proof}
This  follows from \cite{BO94}. A very concise proof that semi-simple
eigenvalue functions are locally Lipschitz could also be found in \cite{Lui11}. 
\end{proof}

That $a_\pm(\delta) = \pm \alpha (A(\delta))$ may fail to be locally Lipschitz
was first observed in \cite{BO94}. 
This may lead to difficulties when $a_+$  is minimized. In contrast, in
our numerical testing it is $a_-(\del)=-\alpha\left( A(\del)\right)$ which is minimized, and this
behaves consistently like an upper-$C^1$ function. Theoretically
we expect $a_-$ to have a strict standard model if all active eigenvalues
of $A(\del^*)$ are semi-simple.  
An argument indicating that its standard model is at least directionally strict is given
in \cite[V.C]{and}. See  \cite{MBO97} for more information on $a_\pm$.

\begin{theorem}[Worst-case spectral abscissa on $\bf \Delta$]
\label{theorem2} 
Let $\del^j\in {\bf \Delta}$ be the sequence generated by Algorithm
{\rm \ref{algo1}} for program {\rm (\ref{alpha})}, where
the standard model $\phi^\sharp$ of $a_-$ is used.
Suppose every accumulation point $\del^*$ of the sequence $\del^j$ is simple. Then  the sequence $\delta^j$ converges  
to a critical point of  {\rm (\ref{alpha})}.
\end{theorem}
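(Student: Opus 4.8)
The plan is to obtain Theorem \ref{theorem2} as a consequence of the master convergence result Theorem \ref{theorem1}, after verifying its hypotheses in the present situation. One of them is for free: $C={\bf\Delta}=[-1,1]^m$ is compact, so the sublevel set $\{\del\in{\bf\Delta}: a_-(\del)\le a_-(\del^1)\}$ is automatically bounded. The substantive point is to certify that the standard model $\phi^\sharp$ of $a_-$ is strict. Here it is important to observe that one only needs strictness \emph{at the accumulation points} of the generated sequence: inspecting the proof of Theorem \ref{theorem1}, the strict axiom $(\widetilde M_2)$ is invoked solely in part 5), and there only for sequences $\z^{k_j-\nu_j},\x^j$ that contract onto an accumulation point $\x^*$ (part 6) reduces to the earlier parts and does not use $(\widetilde M_2)$ on its own). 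So it suffices to show that $\phi^\sharp$ is strict at every accumulation point of the $\del^j$.

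Fix such an accumulation point $\del^*$ and use the standing hypothesis that $\del^*$ is simple, i.e.\ every eigenvalue of $A(\del^*)$ attaining the spectral abscissa is simple. First, the map $\del\mapsto A(\del) = A + B_p\Delta(I-D_{qp}\Delta)^{-1}C_q$ is rational, hence real-analytic, on a neighborhood of $\del^*$, because $I-D_{qp}\Delta(\del^*)$ must be invertible (otherwise $A(\del^*)$ would be undefined). Second, a simple eigenvalue depends holomorphically on the matrix, so each active eigenvalue of $A(\del^*)$ extends to a real-analytic branch $\lambda_i(A(\del))$ near $\del^*$, and for $\del$ close to $\del^*$ one has $\alpha(A(\del)) = \max_i \operatorname{Re}\lambda_i(A(\del))$, a finite maximum of real-analytic functions. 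Such a maximum is lower-$C^1$, hence $a_- = -\alpha\circ A$ is upper-$C^1$ in a neighborhood of $\del^*$; in particular $a_-$ is locally Lipschitz there, consistently with Lemma \ref{prop4}, so the algorithm is well defined along the tail of the sequence. By the Lemma of \cite{Noll2012} quoted above, the standard model of an upper-$C^1$ function is strict, so $\phi^\sharp$ is strict at $\del^*$ (and, by Lemma \ref{upper}, the run of Algorithm \ref{algo1} near $\del^*$ even coincides with the classical trust-region scheme). Theorem \ref{theorem1} therefore applies and shows that every accumulation point of the $\del^j$ is a critical point of (\ref{alpha}).

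It remains to upgrade this to convergence of the whole sequence. For that, note that $\alpha(\cdot)$ is semialgebraic on $\mathbb R^{n\times n}$ (its eigenvalues are roots of the characteristic polynomial, and $\operatorname{Re}$ and $\max$ are semialgebraic operations) and that $A(\cdot)$ is a rational, hence semialgebraic, map; by Tarski--Seidenberg $a_-$ is semialgebraic, and since it is also locally Lipschitz near $\del^*$, it satisfies the Kurdyka--{\L}ojasiewicz inequality at $\del^*$. The descent estimate (\ref{descent}) together with the Cauchy-type bound (\ref{cauchy1}) put us in exactly the framework of the single-limit arguments of \cite{absil,attouch,Noll2012} — the same mechanism used in the preceding theorem for upper-$C^1$ objectives — and these force $\del^j\to\del^*$.

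The main obstacle I expect is the localization of strictness: one must verify carefully that no step in the proof of Theorem \ref{theorem1} invokes $(\widetilde M_2)$ away from an accumulation point, so that the merely local upper-$C^1$ structure of $a_-$ is enough (unlike Theorem \ref{theorem_h}, where upper-$C^1$-ness held globally on the stability domain, which is why the simplicity assumption on accumulation points cannot be dropped here). A secondary, more technical point is the implication ``simple active eigenvalues $\Rightarrow$ $a_-$ upper-$C^1$ near $\del^*$'': one has to handle complex conjugate pairs (only $\operatorname{Re}\lambda_i$ enters, which remains real-analytic for a simple $\lambda_i$), the possible coexistence of several distinct active eigenvalues (so $a_-$ is genuinely nonsmooth at $\del^*$, yet still a finite minimum of real-analytic functions), and the well-posedness of $\del\mapsto A(\del)$ near $\del^*$. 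Both are manageable, but they are where the care is needed.
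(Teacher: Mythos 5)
Your proposal is correct and, at the top level, takes the same route as the paper: reduce everything to Theorem \ref{theorem1}. But the comparison is lopsided, because the paper's entire proof is the single sentence ``We apply Theorem \ref{theorem1} to get convergence in the sense of subsequences'' --- the strictness of $\phi^\sharp$ under the simplicity hypothesis is only gestured at in the surrounding text (which says one \emph{expects} a strict, or at least directionally strict, standard model when the active eigenvalues are semi-simple, citing \cite{and}), and the passage from subsequential convergence to convergence of the whole sequence, which the theorem actually asserts, is not addressed at all. Your three additions are precisely what is needed to close these gaps: (i) the observation that $(\widetilde{M}_2)$ enters the proof of Theorem \ref{theorem1} only in part 5), and there only along sequences contracting onto an accumulation point, so that strictness of the model is needed only at accumulation points; (ii) the derivation of local upper-$C^1$-ness of $a_-$ near a simple $\del^*$ from analytic perturbation of simple eigenvalues (a finite maximum of real-analytic branches is lower-$C^1$), whence strictness of $\phi^\sharp$ there by the quoted lemma of \cite{Noll2012}; and (iii) the Kurdyka--\L ojasiewicz upgrade via semialgebraicity of $\alpha\circ A$, which parallels exactly what the paper does for Theorem \ref{theorem_h}. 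The caveats you flag are real but affect the paper equally: local Lipschitzness of $a_-$, hence the well-posedness of the Clarke machinery and of axiom $(M_2)$ in the inner-loop lemmas, is only guaranteed near points with semi-simple active eigenvalues, and the single-limit argument needs the descent estimate (\ref{descent}) and the aggregate-subgradient bounds in addition to KL; you handle both at the same level of rigor as the paper itself.
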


\begin{proof}
We apply Theorem \ref{theorem1} to get convergence in the sense of subsequences. 
\end{proof}

\subsection{Distance to instability}
Our third problem is related to the above and
concerns computation of the structured distance to instability of (\ref{plant}).
Suppose the matrix $A$ in (\ref{plant}) is nominally stable, i.e., $A(\delta)$ is stable at the nominal
$\del = 0$.  Then the structured distance to instability is defined
as
\begin{equation}
\label{dist}
d^* = \max\{ d > 0:  A(\del) \mbox{ stable for all } |\del|_\infty  < d\},
\end{equation}
where $A(\del)$ is given by (\ref{AofDelta}), and
$|\del|_\infty=\max\{|\delta_1],\dots,|\delta_m|\}$.
Equivalently, we may consider
the following constrained optimization program
\begin{eqnarray}
\label{dist2}
\begin{array}{ll}
\mbox{minimize} & t \\
\mbox{subject to}& -t \leq \delta_i \leq t \\
&\alpha\left( A(\del) \right)\geq 0
\end{array}
\end{eqnarray}
with decision variable $\x=(t,\del)\in \mathbb R^{m+1}$. Introducing
the convex set
$C=\{(t,\del): -t\leq \delta_i\leq t, i=1,\dots,m\}$, this can be 
transformed to program (\ref{program})
if we minimize an exact penalty objective $f(\x)=t + c \max\left\{0,-\alpha\left( A(\del) \right)\right\}$
with a penalty constant $c>0$ over $C$. 

It is clear that the objective of $f$ has essentially the same properties as $a_-$.
It suffices to argue that $\partial \max\{0,-\alpha(A(\del))\} = {\rm co} \{0\} \cup \partial a_-(\del)$
at points $\del$ where $a_-$ is locally Lipschitz and $a_-(\del)=0$. Indeed,
the inclusion $\subset$ holds in general. For the reverse inclusion it suffices to
observe that $0\in \partial \max\{0,-\alpha(A(\del))\}$ for those $\del$ where $a_-(\del)=0$. This is clear,
because $0$ is a minorant of this max function. We may then use the following

\begin{lemma}
Suppose $f = \max\{f_1,f_2\}$ and $f_i$ has a strict model $\phi_i$. Then $\phi = \max\{\phi_1,\phi_2\}$
is a strict model of $f$ at those $\x$ where $\partial f(\x)={\rm co} \left(\partial f_1(\x) \cup \partial f_2(\x)\right)$.
\end{lemma}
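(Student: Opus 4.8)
The plan is to verify directly that $\phi(\cdot,\x)=\max\{\phi_1(\cdot,\x),\phi_2(\cdot,\x)\}$ inherits convexity and axioms $(M_1)$, $(M_3)$ from the $\phi_i$, and satisfies the strict axiom $(\widetilde{M}_2)$ at the points in question. Convexity is immediate, a maximum of convex functions being convex, and the value identity $\phi(\x,\x)=\max\{f_1(\x),f_2(\x)\}=f(\x)$ in $(M_1)$ is equally immediate.

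The one step that genuinely uses the hypothesis is the subdifferential inclusion in $(M_1)$. Let $A(\x)=\{i:f_i(\x)=f(\x)\}$; since $\phi_i(\x,\x)=f_i(\x)$, this is also the set of indices active for $\phi(\cdot,\x)$ at $\x$. The standard max-rule for the (Clarke $=$ convex) subdifferential of a finite maximum of convex functions gives $\partial_1\phi(\x,\x)={\rm co}\bigcup_{i\in A(\x)}\partial_1\phi_i(\x,\x)$, and $(M_1)$ for each $\phi_i$ bounds this by ${\rm co}\bigcup_{i\in A(\x)}\partial f_i(\x)\subset{\rm co}(\partial f_1(\x)\cup\partial f_2(\x))=\partial f(\x)$, the last equality being exactly the assumption. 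When only one index is active the inclusion holds for free, since then $\phi(\cdot,\x)$ coincides with $\phi_{i_0}(\cdot,\x)$ near $\x$; the hypothesis on $\partial f(\x)$ is only binding when $f_1$ and $f_2$ are both active at $\x$.

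Axiom $(M_3)$ follows by commuting $\limsup$ past a finite maximum: for $\x_k\to\x$, $\y_k\to\y$ one has $\limsup_k\phi(\y_k,\x_k)=\max_i\limsup_k\phi_i(\y_k,\x_k)\le\max_i\phi_i(\y,\x)=\phi(\y,\x)$ by $(M_3)$ for each $\phi_i$. For the strict axiom, fix $\y_k\to\x$ and $\x_k\to\x$; strictness of each $\phi_i$ at $\x$ produces $\epsilon_k^{(i)}\to0^+$ with $f_i(\y_k)\le\phi_i(\y_k,\x_k)+\epsilon_k^{(i)}\|\y_k-\x_k\|\le\phi(\y_k,\x_k)+\epsilon_k^{(i)}\|\y_k-\x_k\|$, the last step because $\phi_i\le\phi$ pointwise. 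Taking $\epsilon_k=\max\{\epsilon_k^{(1)},\epsilon_k^{(2)}\}\to0^+$ and maximizing over $i$ yields $f(\y_k)\le\phi(\y_k,\x_k)+\epsilon_k\|\y_k-\x_k\|$, which is $(\widetilde{M}_2)$.

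The only subtlety I anticipate is getting the active-index bookkeeping in $(M_1)$ right and quoting the Clarke/convex max-rule in the precise form needed; everything else is routine manipulation with maxima, limits, and the inequalities $\phi_i\le\phi$. The same argument works verbatim for a maximum of finitely many $f_i$, and it shows $\phi$ is a (not necessarily strict) model of $f$ on any set where $(M_1)$--$(M_3)$ for the $\phi_i$ and the subdifferential identity hold throughout.
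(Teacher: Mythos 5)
Your proof is correct and takes essentially the same route as the paper: the paper also observes that convexity, $(M_3)$ and $(\widetilde{M}_2)$ pass through the finite maximum routinely, and that the only substantive point is $(M_1)$, which it settles exactly as you do, via $\partial_1\phi(\x,\x)\subset{\rm co}\left(\partial_1\phi_1(\x,\x)\cup\partial_1\phi_2(\x,\x)\right)\subset{\rm co}\left(\partial f_1(\x)\cup\partial f_2(\x)\right)=\partial f(\x)$ using the exactness hypothesis. Your extra care with the active-index set in the max-rule is a harmless refinement of the paper's one-line version of the same step.
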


\begin{proof}
In fact, the only axiom which does not follow immediately is $(M_1)$.
We only know $\partial_1 \phi_i(\x,\x) \subset \partial f_i(\x)$, so
$\partial_1 \phi(\x,\x) = {\rm co} \left( \partial_1 \phi_1(\x,\x)\cup \partial_1 \phi_2(\x,\x) \right)
\subset {\rm co} \left(\partial f_1(\x) \cup \partial f_2(\x)\right)$. For those $\x$ where the
maximum rule is exact, this implies indeed $\partial_1\phi(\x,\x) \subset \partial f(\x)$.
\end{proof}

This means that we can use the model
$\phi(\del',t',\del,t) =t'+c \max\{0,\phi^\sharp(\del',\del)\}$ in
Algorithm \ref{algo1} to solve (\ref{dist2}), naturally with the same proviso as in section
\ref{sec-alpha}, where we need the standard model $\phi^\sharp$ of $a_-$ to be strict.

\begin{center}
\begin{table}[h!]
{\small
\begin{center}
\begin{tabular}{|| c | l | c | l | c | c | c | c | c | c ||}
\hline \hline
$\sharp$ & {\bf Benchmark} & $n$ &  {\bf Structure} & $\underline{h}$ &  $h^*$ & $\overline{h}$ &$t^*$ & ${\overline{h}}/{h^*}$ & ${t_{\rm wc}}/{t^*}$\\
\hline\hline
1 &Beam1 & 11 &   $1^3 3^1 1^1$& 1.70&1.71 &1.70& 1.02 & 0.99 & 13.29 \\
\hline
2 & Beam2 & 11 &  $1^3 3^1 1^1$ &1.29  &     1.29                        &1.29 & 0.36 & 1       & 32.68            \\
\hline
3 & DC motor 1 & 7 &  $1^12^2$                   &0.72 &0.72 &0.72&  0.51 & 1.01 &14.49      \\
\hline
4 & DC motor 2 & 7 &       $1^12^2$     &0.50 &  0.50                   &0.50 &0.13 &  1       &45.02    \\
\hline
5 & DVD driver 1 & 10 &   $1^13^31^13^1$                 & 45.45 &45.45& 45.46&0.23& 1      &189.31 \\
\hline
6 & Four-disk system 1 & 16 &  $1^13^51^4$                 & 3.50 &4.56 &3.50 &0.44 & 0.77&343.35 \\
\hline
7 & Four-disk  system 2 & 16 &      $1^13^51^4$           & 0.69 &0.68 &0.69&0.34  & 1.01 & 558.03\\
\hline
8&Four-tank system 1 &12 &         $1^4$      & 5.60 &5.60 &5.60&0.32  &  1      & 5.72 \\
\hline
9 & Four-tank system 2 &12 & $1^4$   & 5.60&  5.57 &5.60&0.29  & 1       & 7.32 \\
\hline
10&Hard disk driver 1 & 22 & $1^32^41^4$     & 243.9 &7526.6& Inf &0.96& Inf  & 73.10\\
\hline
11&Hard disk driver 2 & 22&  $1^32^41^4$    &0.03 &0.03  &0.03 & 0.20  & 1.12 & 314.92\\
\hline
12&Hydraulic servo 1 & 9 &       $1^9$            &1.17 &1.17  &1.17& 0.34  &  1      & 10.94 \\
\hline
13&Hydraulic servo 2 & 9 & $1^9$           &0.7 & 0.70  &0.7 & 0.33  & 1.01  & 11.69 \\
\hline
14 &Mass-spring 1 & 8 &   $1^2$                & 3.71 &6.19 &3.71&0.31  & 0.60  & 3.54  \\
\hline
15 & Mass-spring 2 & 8 &   $1^2$    &6.84 & 6.84         & 7.16  &0.13  & 1.05  & 7.05  \\
\hline
16& Missile 1 & 35 &  $1^36^3$                 & 5.12 &5.15 &5.12 &0.46 & 0.99   & 272.54\\
\hline
17 & Missile 2 & 35 &   $1^36^3$       &1.83 & 1.82        & 1.83  &0.22  & 1        &1183.5 \\
\hline
18 & Filter 1 & 8 &  $1^1$           &4.86 & 4.86 &4.86& 0.32 & 1         & 3.41\\
\hline
19 & Filter 2 & 3 & $1^1$      &2.63 &   2.64       & 2.63 &0.27  & 1         & 4.06 \\
\hline
20& Filter-Kim 1 & 3 &  $1^2$        &2.95 &2.96 &2.95 & 0.24  & 1        & 3.4\\
\hline
21 &Filter-Kim 2 & 3 & $1^2$ &2.79  & 2.79       &2.79 & 0.07  &  1       & 12.95 \\
\hline
22 & Satellite 1 & 11 &        $1^16^11^1$                       &0.16 &0.17&0.16  &0.33  & 1        & 86.17\\
\hline
23 & Satellite 2 & 11 &   $1^16^11^1$   & 0.15 & 0.15                        & 0.15  &0.70  & 1        & 41.09\\
\hline
24 & Mass-spring-damper  1 & 13 & $1^1$&7.63 &8.85 &7.63&0.21  &0.86   &4.88 \\
\hline
25 & Mass-spring-damper 2 & 13 & $1^1$&1.65 &1.65 &1.65 &0.08  & 1       &13.70\\
\hline
26 & Robust Toy 1 &3 & $1^12^1$&0.12 &0.12 &0.12&0.56 &1 &4.24 \\
\hline
27 & Robust Toy 2 &3 &$1^22^23^1$ &20.85 &21.70 &20.91 &0.24 &0.96 &29.19\\
\hline\hline
\end{tabular}
\end{center}
}
\vspace*{.05cm}
\caption{Benchmarks  for worst-case $H_\infty$-norm on ${\bf \Delta}$ \label{table1}}
\end{table}
\end{center}

\section{Experiments}
In this part experiments with algorithm \ref{algo1}
applied to programs (\ref{h}), (\ref{alpha}) and  (\ref{dist}) are reported.

\subsection{Worst-case $H_\infty$-norm}
We apply algorithm \ref{algo1}  to program (\ref{h}). 
Table \ref{table1} shows the result for 27 benchmark systems, where
$n$ is the number of states, and column 4 gives the uncertain structure
$[r_1 \dots r_m]$ according to (\ref{matrix}). An  expression like $1^33^11^1$ corresponds to $
[r_1\,r_2\,r_3\,r_4\,r_5]=[1\, 1\, 1\, 3\, 1]$. 
The values achieved by algorithm \ref{algo1}
are $h^*$ in column 6, obtained in $t^*$ seconds  CPU. 
To certify  $h^*$  we 
use the function {\tt WCGAIN} of \cite{wcgain}, which is a
branch-and-bound method tailored to program (\ref{h}).
{\tt WCGAIN} computes a lower and an upper bound $\underline{h}, \overline{h}$ shown in columns 5,7
within $t_{\rm wc}$ seconds. It also
provides a $\underline{\delta}\in {\bf \Delta}$  realizing the lower bound.

The results in table \ref{table1} show that $h^*$ is certified by
{\tt WCGAIN}  in the majority of cases
1-5,7-9,11-13,16,17. Case 15 leaves a doubt, while cases 6,10,14,24 are failures of {\tt WCGAIN}.
On average algorithm \ref{algo1} was 121-times faster than {\tt WCGAIN}. 
The fact that both methods are in good agreement can be understood as an endorsement of
our approach.

\subsection{Robust stability over ${\bf \Delta}$}
In our second test algorithm \ref{algo1}  is applied to program
(\ref{alpha}). We have used a bench of 32 cases 
gathered in Table \ref{table2}, and algorithm \ref{algo1} converges to the value $\alpha^*$
in $t^*$ seconds.  
\begin{table}[!h]
{\small
\begin{tabular}{|| c | l | c | l | c | c | c | c ||}
\hline\hline
$\sharp$  & {\bf Benchmark} & $n$ & {\bf Structure} & $\alpha^*$ & $\alpha_{\rm ZM}$ & $t^*$ &$ t_{\rm ZM}$ \\
\hline \hline
28 & Beam3    & 11 &  $1^33^11^1$&-1.2e-7 & -1.2e-7 & 0.19 & 32.70 \\
\hline
29 & Beam4 & 11 & $1^33^11^1$ &-1.7e-7 & -1.7e-7 & 0.04 & 33.00 \\
\hline
30 & Dashpot system 1 & 17 &  $1^6$& 0.0186 & 0.0185 & 0.23 & 90.25 \\
\hline
31 & Dashpot system 2 & 17 & $1^6$ &-1.0e-6 & -1.0e-6 & 0.39 & 39.63 \\
\hline
32 & Dashpot system 3 & 17 & $1^6$& -1.6e-6 & -1.6e-6 & 0.08 & 39.70\\
\hline
33 & DC motor 3 & 7 & $1^12^2$  & -0.0010 & -0.0010 & 0.02 & 20.63\\
\hline
34 & DC motor 4& 7 & $1^12^2$& -0.0010 & -0.0010 & 0.02 & 20.74\\
\hline
35 & DVD driver 2 & 10 &  $1^13^31^13^1$& -0.0165 & -0.0165 & 0.04 & 49.29\\
\hline
36 & Four disk system 3 & 16 &$1^13^51^4$ & 0.0089 & 0.0088 & 0.10 & 159.61\\
\hline
37 & Four disk system 4 & 16 &$1^13^51^4$& -7.5e-7 & -7.5e-7 & 0.29 & 73.86 \\
\hline
38 & Four disk system 5 & 16 & $1^13^51^4$& -7.5e-7 & -7.5e-7 & 0.29 & 74.36 \\
\hline
39 & Four tank system 3 & 12 & $1^4$& -6.0e-6 & -6.0e-6 & 0.17 & 25.81 \\
\hline
40 & Four tank system 4 & 12 &$1^4$  & -6.0e-6 & -6.0e-6 & 0.02 & 26.20\\
\hline
41 & Hard disk driver 3 & 22 &  $1^32^41^4$&  266.70 & 266.70 & 0.09 & 297.21\\
\hline
42 & Hard disk driver 4 & 22 & $1^32^41^4$& -1.6026 & -1.6026 & 0.06 & 80.40\\
\hline
43 & Hydraulic servo 3 & 9 & $1^9$& -0.3000 & -0.3000 & 0.04 & 51.41 \\
\hline
44 & Hydraulic servo 4 & 9 & $1^9$& -0.3000 & -0.3000 & 0.02 & 50.95 \\
\hline
45 & Mass-spring 3 & 8 & $1^2$& -0.0054 & -0.0054 & 0.01 & 31.59\\
\hline
46 & Mass-spring 4 & 8 &$1^2$& -0.0368 & -0.0370 & 0.01 & 16.94 \\
\hline
47 & Missile 3 & 35 &$1^36^3$& 22.6302 & 22.1682 & 0.07 & 104.18 \\
\hline
48 & Missile 4 & 35 &$1^36^3$& -0.5000 & -0.5000 & 0.07 & 51.78 \\
\hline
49 & Missile 5 & 35 &$1^36^3$& -0.5000 & -0.5000 & 0.07 & 52.24\\
\hline
50 & Filter 3 & 8 & $1^1$& -0.0148 & -0.0148 & 0.06 & 7.05\\
\hline
51 & Filter 4 & 8 & $1^1$& -0.0148 & -0.0148 & 0.02 & 6.89\\
\hline
52 & Filter-Kim 3 & 3 & $1^2$& -0.2500 & -0.2500 & 0.01 & 12.83 \\
\hline
53 & Filter-Kim 4 & 3 & $1^2$ & -0.2500 & -0.2500 & 0.01 & 12.90\\
\hline
54 & Satellite 3 & 11 & $1^16^11^1$& 3.9e-5 &  3.9e-5 & 0.02 & 44.02 \\
\hline
55 & Satellite 4 & 11 &$1^16^11^1$& -0.0269 & -0.0269 & 0.02 & 26.02\\
\hline
56 & Satellite 5& 11 & $1^16^11^1$& -0.0268 & -0.0268 &  0.02 & 26.08\\
\hline
57 & Mass-spring-damper 3 & 13 & $1^1$ & 0.2022 & 0.2022 & 0.01 & 8.30\\
\hline
58 & Mass-spring-damper 4 & 13 & $1^1$ & -0.1000 & -0.1000 & 0.01 & 6.91 \\
\hline
59 & Mass-spring-damper 5 & 13 & $1^1$& -0.1000 & -0.1000 & 0.01 & 6.94\\
\hline\hline
\end{tabular}
}
\vspace*{0.05cm}
\caption{Benchmarks  for worst-case spectral abscissa (\ref{alpha}). \label{table2}}
\end{table}
To certify $\alpha^*$ we have implemented algorithm 
 \ref{algo2}, known as
integral global optimization, or as the Zheng-method (ZM), based on \cite{ZM}. 
\begin{algorithm}
\caption{Zheng-method for global optimization  $\alpha^* = \max_{\x\in {\bf \Delta}} f(\x)$}
\label{algo2}
\noindent\fbox{%
\begin{minipage}[b]{\dimexpr\textwidth-\algorithmicindent\relax}
\begin{algorithmic}
\STEP{Initialize} 
	Choose initial $\alpha < \alpha^*$.
	\STEP{Iterate}
	Compute $\alpha^+ =\displaystyle \frac{\int_{[f\geq \alpha]} f(x)\, d\mu(x)}{\mu [ f\geq \alpha]}$.
	\STEP{Stopping} If progress of $\alpha^+$ over $\alpha$ is marginal, stop, otherwise update $\alpha$
	by $\alpha^+$ and loop on with step 2.
\end{algorithmic}
\end{minipage}
}%
\end{algorithm}
\noindent
Here $\mu$ is any continuous finite Borel measure on ${\bf \Delta}$. Numerical implementations
use Monte-Carlo to compute the integral, and we refer to \cite{ZM} for details. Our numerical tests
are performed with $2000\cdot m$ initial samples, and stopping criterion variance$\,\, =10^{-7}$; cf. \cite{ZM}
for details. The result obtained by
ZM are $\alpha_{\rm ZM}$ obtained in $t_{\rm ZM}$ seconds CPU.

A favorable  feature of ZM is that it can  be initialized with the
lower bound  $\alpha^*$, and this leads to a significant
speedup. Altogether ZM and algorithm \ref{algo1} are in very
good agreement on the test bench, which we consider an argument in favor
of our approach.

\begin{table}[!h]
{\small
\begin{center}
\begin{tabular}{|| c | c | c | c |  c | c || c | c || c    ||}
\hline\hline
$\sharp$ & {\bf Benchmark} & $n$ & {\bf Structure} &$d^*$ & $d_{\rm F}/d^*$& ${\bf D_{\rm ZM}}$& $t^*$ &$t_{\rm ZM}$    \\
\hline\hline
60 &Academic example  & 5 &   $1^1$   &    0.79 &    1  &    $\surd$  &    0.15 & 7.3      \\ \hline 
    61 &Academic example  & 4 &   $1^3$ &    3.41 &    1  &   $\surd$  &    0.13& 23.9      \\ \hline 
    62 & Academic example & 4 &          $2^2$  &    0.58 &    1  &    $\surd$ &    0.15&97.4     \\ \hline  
    63 & Inverted pendulum & 4 &     $1^3$ &    0.84 &    1  &    $\surd$ &    0.22 &24.7    \\ \hline  
    64 & DC motor & 4 &    $1^3$$2^1$  $1^1$      &    1.25 &    1  &   $\surd$  &    0.19&37.7    \\ \hline  
    65 & Bus steering system & 9 &    $2^1$$3^1$  &    1.32 &    0.99 &   $\surd$  &    0.37& 13.8     \\ \hline 
    66 & Satellite & 9 &   $2^11^2$  &    1.01 &    0.99 &    $\surd$  &    0.3 & 20.2    \\ \hline
    67&Bank-to-turn missile &6 &  $1^4$      &    0.60 &    0.99 &   $\surd$ &    0.17& 167.7    \\ \hline
   68 & Aeronautical vehicle &8 &  $1^4$ &    0.61 &    0.99 &   $\surd$  &    0.19&38.9    \\ \hline
   69 &Four-tank system & 10 &    $1^4$  &    6.67 &    0.99 &    $\surd$  &    0.27& 24.9    \\ \hline
   70  &Re-entry vehicle & 6&      $3^12^13^1$   &    6.20 &    1  &   $\surd$  &    0.44& 21.8    \\ \hline
   71  &Missile & 14 &   $1^4$  &    7.99 &    1  &    $\surd$  &    0.25& 24.9    \\ \hline
   72 &Cassini spacecraft & 17 &      $1^4$ &    0.06 &    1  &   $\surd$  &    0.13&25.1     \\ \hline
   73 &Mass-spring-damper & 7 &      $1^6$        &    1.17 &    1  &    $\surd$  &    0.17& 2536.3     \\ \hline
   74 & Spark ignition engine & 4 &    $1^7$  &    1.22 &    0.99 &   $\surd$  &    0.41&42.8      \\ \hline
   75& Hydraulic servo system & 8 &   $1^8$  &    1.50 &    0.99 &    $\surd$  &    0.41& 62.8    \\ \hline
   76  & Academic example & 41 &   $2^11^3$ &    1.18 &    0.99 &    $\surd$  &    0.57&36.5     \\ \hline
   77 & Drive-by-wire vehicle & 4 &     $1^2$$2^7$ &    1  &    0.99 &    $\surd$ &    0.96& 97.0   \\ \hline
   78   & Re-entry vehicle & 7 &     $1^3$$6^1$$4^1$&    1.02 &    0.98 &    $\surd$ &    0.42&132.4      \\ \hline
   79  & Space shuttle& 34 &     $1^9$&    0.79 &    0.99 &  $\surd$  &    0.8& 60.9    \\ \hline
   80   &Rigid aircraft& 9 &  $1^{14}$ &    5.42 &    1  &    $\surd$  &    0.54& 252.5     \\ \hline
   81 & Fighter aircraft & 10 &     $3^115^11^62^11^1$ &    0.59 &    0.99 &    $\surd$  &    1.31& 171.3   \\ \hline
   82 & Flexible aircraft & 46 &  $1^{20}$  &    0.22 &    0.99 &   $\surd$ &    1.26&180.3   \\ \hline
   83  & Telescope mockup & 70 & $1^{20}$  &    0.02 &    0.99 &    $\surd$  &    1.37&274.8   \\ \hline
   84   & Hard disk drive & 29 &     $1^82^41^11$&    0.82 &    1  &    $\surd$ &    2.87& 202.1    \\ \hline
   85  & Launcher &30 &    $1^22^21^23^16^11^{12}2^8$ &    1.16 &    0.99 &   $\surd$ &    4.08&271.2     \\ \hline
   86  & Helicopter &12 &  $30^4$  &    0.08 &    0.99 &    $\surd$  &    0.85&70.7     \\ \hline
87& Biochemical network &7 &  $39^{13}$ & 0.00 & 1           & failed & 36.76 & - \\
\hline\hline
\end{tabular}
\end{center}
\vspace*{0.05cm}
\caption{Benchmarks  for  distance to instability (\ref{dist}), available in \cite{smac}.  \label{table3}}
}
\end{table}

\subsection{Distance to instability}
In this last part we apply Algorithm \ref{algo1} to (\ref{dist}) using the test bench of Table \ref{table3}, which can be found in \cite{fabrizi}.
The distance computed by Algorithm \ref{algo1} is $d^*$ in column 2 of Table \ref{table3}.
We certify $d^*$  
using ZM \cite{ZM} and by comparing to the  local method of \cite{fabrizi}. 

To begin with, ZM is used in the following way.
For a given $d^*$  and 
a confidence level $\gamma = 0.05$ we compute
\begin{equation}
\label{decision1}
\underline{\alpha} = \max\{ \alpha(A(\delta)): \delta \in (1-\gamma)d^* {\bf \Delta}\}
\end{equation}
and
\begin{equation}
\label{decision2}
\overline{\alpha} = \max\{ \alpha(A(\delta)): \delta \in (1+\gamma)d^* {\bf \Delta}\}.
\end{equation}
If $\underline{\alpha} < 0$ and $\overline{\alpha} > 0$
then $d^*$ is certified by ZM with that confidence level $\gamma$.
This happens in all cases except 87, where ZM failed due to the large size.

We also compared $d^*$ to the result $d_{\rm F}$ of the technique \cite{fabrizi}, which is a sophisticated tool
tailored to problem (\ref{dist}). Column 6 of table \ref{table3} shows
perfect agreement on the bench from \cite{fabrizi}. Given the highly dedicated
character of \cite{fabrizi}, this can be understood as an endorsement
of our optimization-based approach.

\section*{Conclusion}
We have presented a bundle  trust-region  method for nonsmooth, nonconvex 
minimization, where cutting planes are tangents to a convex local model $\phi(\cdot,\x)$ of  $f$,
and where a trust-region strategy replaces the proximity control mechanism.
Global convergence of our method was proved
under natural hypotheses.

By way of an example we demonstrated that the standard approach in trust-region methods based on the Cauchy point fails
for nonsmooth functions.  We have identified a particular class $\mathscr S$  of nonsmooth functions, where the Cauchy
point argument can be salvaged. Functions in $\mathscr S$,
even when nonsmooth, can be minimized as if they were smooth.  The class $\mathscr S$  must therefore be regarded as
atypical in a nonsmooth optimization program,  and indeed, nonsmooth convex functions
are not in $\mathscr S$.

Algorithm \ref{algo1} was validated numerically on a test bench of 87 problems in automatic control,
where the versatility of algorithm \ref{algo1} with regard to the choice of the norm was  exploited. 
We were able to compute good quality lower bounds
for three NP-hard optimization problems related to the analysis of parametric robustness in system theory. 
In the majority of cases, posterior application of a global optimization technique allowed us to
certify these results as globally optimal.

\renewcommand\thesection{\arabic{section}}

\end{document}